\definecolor{labelkey}{rgb}{0.6,0,1}
\def\eps{\varepsilon}
\def\R{\hbox{\bf R}}
\def\Z{\hbox{\bf Z}}
\def\N{\hbox{\bf N}}
\def\D{{\Delta}}
\def\I{{\cal I}}
\def\F{{\cal F}}
\def\a{\alpha}
\def\e{\epsilon}
\def\l{{\lambda}}
\def\d{\delta}
\renewcommand{\thesubsection}{\arabic{section}.\arabic{subsection}}
\renewcommand{\theequation}{\arabic{section}.\arabic{equation}}
\newtheorem{theo}{\bf Theorem}[section]
\newtheorem{lem}[theo]{\bf Lemma}
\newtheorem{pro}[theo]{\bf Proposition}
\newtheorem{cor}[theo]{\bf Corollary}
\newtheorem{defi}[theo]{\bf Definition}
\newtheorem{ex}{Example}
\theoremstyle{remark}
\newtheorem{rem}[theo]{\bf Remark}
\renewcommand{\thesubsection}{\arabic{section}.\arabic{subsection}}
\renewcommand{\theequation}{\arabic{section}.\arabic{equation}}
\renewcommand{\N}{{\mathbb N}}
\renewcommand{\R}{{\mathbb R}}
\renewcommand{\Z}{{\mathbb Z}}
\renewcommand{\e}{\varepsilon}
\newcommand{\ox}{\overline{x}}
\newcommand{\ot}{\overline{t}}
\newenvironment{Proofc}[1]{\smallskip\par\noindent\textsc{#1}\quad}%
  {\hfill$\Box$\bigskip\par}
\begin{document}

\title{\bf Homogenization of accelerated Frenkel-Kontorova models
  with $n$ types of particles}
\author{\renewcommand{\thefootnote}{\arabic{footnote}}
  N. Forcadel\footnotemark[1] , C. Imbert\footnotemark[1] ,
  R. Monneau\footnotemark[2]}

\footnotetext[1]{CEREMADE, UMR CNRS 7534, Universit\'e Paris-Dauphine,
Place de Lattre de Tassigny, 75775 Paris Cedex 16, France}
\footnotetext[2]{Universite Paris-Est, Cermics, Ecole des ponts, 6-8 avenue Blaise Pascal, 77455 Marne la
Vallee Cedex  2, France.}

\maketitle

\vspace{20pt}


\begin{abstract}
  We consider systems of ODEs that describe the dynamics of
  particles. Each particle satisfies a Newton law (including a damping 
  term and an acceleration term) where the force is created by the interactions
  with other particles and with a periodic potential. The presence
  of a damping term allows the system to be monotone. Our study takes
  into account the fact that the particles can be different.

  After a proper hyperbolic rescaling, we show that solutions of
  these systems of ODEs converge to solutions of some macroscopic
  homogenized Hamilton-Jacobi equations.
\end{abstract}

\noindent {\small \bf AMS Classification:} {\small 35B27, 35F20, 45K05, 47G20, 49L25, 35B10.}
\smallskip

\noindent{\small{\bf{Keywords:}}} {\small particle system, periodic
  homogenization, Frenkel-Kontorova models, Hamilton-Jacobi equations,
  hull function}

\section{Introduction}

The goal of this paper is to obtain homogenization results for the
dynamics of accelerated Frenkel-Kontorova type systems with $n$ types
of particles. The Frenkel-Kontorova model is a simple physical model 
used in various fields: mechanics, biology, chemistry \textit{etc.}
The reader is referred to \cite{BK} for a general presentation of
models and mathematical problems.  
In this introduction, we start with the simplest
accelerated Frenkel-Kontorova model where there is only one type of
particle (see Eq.~\eqref{eq:1}). We then explain how to deal with $n$
types of particles (see Eq.~\eqref{eq:1n}). We finally present the
general case, namely systems of ODEs of the following form (for a
fixed $m\in \N$)
\begin{equation}\label{eq:fkg}
m_0 \frac{d^2 U_i}{d \tau^2} + \frac{d U_i}{d \tau} = F_i (\tau,U_{i-m}, \dots, U_{i+m}) 
\end{equation}
where $U_i(\tau)$ denotes the position of the particle $i\in \Z$ at
the time $\tau$. Here, $m_0$ is the mass of the particle and $F_i$ is
the force acting on the particle $i$, which will be made precise
later. 

Remark the presence of the damping term $ \frac{d U_i}{d
  \tau}$ on the left hand side of the equation. If the mass
$m_0$ is assumed to be small enough, then this system is monotone. We 
will make such an assumption and the monotonicity of the system 
is crucial in our analysis. 

We recall that the case of fully overdamped dynamics, \textit{i.e.} for
$m_0=0$, has already been treated in \cite{FIM2} (for only one type of
particles).
\bigskip

Several results are related to our analysis. For instance in
\cite{CLL}, homogenization results are obtained for monotone
systems of Hamilton-Jacobi equations. Notice that they obtain a system
at the limit while we will obtain a single equation. Techniques from
dynamical systems are also used to study systems of ODEs; see for
instance \cite{DLL,JS} and references therein.

\subsection{The classical overdamped Frenkel-Kontorova model}

The classical Frenkel-Kontorova model  describes a chain of
classical particles evolving in a one dimensional space, coupled with their
neighbours and subjected to a periodic potential.  If $\tau$ denotes time
and $U_i (\tau)$ denotes the position of the particle $i \in \Z$, one of
the simplest FK models is given by the following dynamics
\begin{equation}\label{eq:1}
m_0 \frac{d^2 U_i }{d\tau^2} +
 \frac{d U_i }{d\tau}= U_{i+1}-2U_i+U_{i-1}  
+ \sin \left(2\pi U_i\right) + L 
\end{equation}
where $m_0$ denotes the mass of the particle, $L$ is a constant driving
force which can make the whole ``train of particles'' move and the term
$\sin \left(2\pi U_i\right)$ describes the force created by a periodic
potential whose period is assumed to be $1$.  Notice that in the previous
equation, we set to one physical constants in front of the elastic and the
exterior forces (friction and periodic potential). The goal of our work is
to describe what is the macroscopic behaviour of the solution $U$ of
\eqref{eq:1} as the number of particles per length unit goes to
infinity. As mentioned above, the particular case where $m_0=0$ is referred
to as the fully overdamped one and has been studied in \cite{FIM2}.

We would like next to give the flavour of our main results.  In order 
to do so, let us assume that at initial time,
particles satisfy
\begin{eqnarray*}
U_i(0)&=&\eps^{-1}u_0(i\eps) \\
\frac{d U_i}{d \tau} (0)&=&0
\end{eqnarray*}
for some $\eps>0$ and some Lipschitz continuous function $u_0(x)$
which satisfies the following assumption
\begin{itemize}
\item[] {\bf Initial gradient bounded from above and below}
\begin{equation}\label{eq:(A0)}
0< 1/K_0 \le (u_0)_x \le K_0 \quad \mbox{on}\quad \R 
\end{equation}
for some fixed $K_0>0$.
\end{itemize}
Such an assumption can be interpreted by saying that at initial time,
the number of particles per length unit lies in $(K_0^{-1} \eps^{-1},
K_0 \eps^{-1})$.

It is then natural to ask what is the macroscopic behaviour of the
solution $U$ of \eqref{eq:1} as $\eps$ goes to zero, \textit{i.e.} as
the number of particles per length unit goes to infinity. To this end,
we define the following function which describes the rescaled
positions of the particles
\begin{equation}\label{eq:ue}
  \overline{u}^\eps(t,x)=\eps U_{\lfloor \eps^{-1} x \rfloor}(\eps^{-1}t)   
\end{equation}
where $\lfloor \cdot \rfloor$ denotes the floor integer part.  One of
our main results states that the limiting dynamics as $\eps$ goes to
$0$ of \eqref{eq:1} is determined by a first order Hamilton-Jacobi
equation of the form
\begin{equation}\label{eq:3}
  \left\{\begin{array}{ll}
      u^0_t = \overline{F}(u^0_x) &\quad \mbox{for}\quad (t,x)\in (0,+\infty)\times \R,\\
      u^0(0,x)=u_0(x)   &\quad \mbox{for}\quad x \in \R
    \end{array}\right.
\end{equation}
where $\overline{F}$ is a continuous function to be determined.  More
precisely, we have the following homogenization result
\begin{theo}[\bf Homogenization of the accelerated FK model] \label{th:0} 
  There exists a critical value $m_0^c$ such that for all $m_0 \in]0, m_0^c]$ and all
  $L \in \R$, there exists a continuous function $\overline{F}: \R \to
  \R$ such that, under assumption \eqref{eq:(A0)}, the function
  $\overline{u}^\eps$ converges locally uniformly towards the unique
  viscosity solution $u^0$ of \eqref{eq:3}.
\end{theo}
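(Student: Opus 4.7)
The plan is to combine the perturbed test function method of Evans for homogenization of Hamilton--Jacobi equations with a cell problem solved by hull (traveling wave) functions that are specifically adapted to the accelerated lattice system.

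\textbf{Step 1: $\eps$-level reformulation.} I would first check that the rescaled function $\bar u^\eps$ defined in \eqref{eq:ue} satisfies, in a suitable discrete/viscosity sense, an equation of the form
\[
m_0\,\eps\,\bar u^\eps_{tt}+\bar u^\eps_t \;=\; M^\eps[\bar u^\eps]+\sin\!\left(2\pi\eps^{-1}\bar u^\eps\right)+L,
\]
where $M^\eps[\bar u^\eps](t,x)=\eps^{-1}\bigl(\bar u^\eps(t,x+\eps)-2\bar u^\eps(t,x)+\bar u^\eps(t,x-\eps)\bigr)$ is the discrete Laplacian. The essential observation is that the hyperbolic rescaling forces the coefficient of $\bar u^\eps_{tt}$ to be $m_0\eps$, so at the formal level the acceleration term disappears in the limit; the whole analysis amounts to making this rigorous while keeping the system monotone at the $\eps$-level.

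\textbf{Step 2: uniform estimates and half-relaxed limits.} Under the threshold $m_0\le m_0^c$, the system is order-preserving, a property I would use in two complementary ways. First, comparison with shifted initial data gives a uniform Lipschitz bound of $\bar u^\eps$ in $x$ (propagating \eqref{eq:(A0)}) and in $t$. Second, these bounds ensure local equiboundedness, so the half-relaxed limits $\bar u=\limsup{}^{\!*}\bar u^\eps$ and $\underline u=\liminf_*\bar u^\eps$ are well defined, finite, and agree with $u_0$ at $t=0$.

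\textbf{Step 3: the hull function cell problem.} For each slope $p\in[K_0^{-1},K_0]$ I would look for a unique constant $\lambda=\bar F(p)$ and a nondecreasing function $h:\R\to\R$ with $h(\xi+1)=h(\xi)+1$ such that the ansatz $U_i(\tau)=p^{-1}h(pi+\lambda\tau)$ is a traveling-wave solution of \eqref{eq:1}. Substituting gives a second-order functional equation for $h$,
\[
m_0\lambda^2 h''(\xi)+\lambda h'(\xi)=h(\xi+p)-2h(\xi)+h(\xi-p)+\sin(2\pi p^{-1} h(\xi))+L,
\]
which I would solve by producing periodic sub-- and supersolutions and invoking Perron's method in the class of nondecreasing functions, with uniqueness of $\lambda$ coming from the monotonicity of the system. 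Continuity and monotonicity of $\bar F$ in $p$ would then follow from standard stability properties of viscosity (or hull function) solutions. This is the main obstacle: solving a genuinely second-order lattice ODE for every $(p,L)$ while preserving the comparison principle is what makes the assumption $m_0\le m_0^c$ indispensable and distinguishes the present case from the overdamped case of \cite{FIM2}.

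\textbf{Step 4: perturbed test function method and conclusion.} Given a smooth test function $\varphi$ touching $\bar u$ from above at $(t_0,x_0)$, set $p=\varphi_x(t_0,x_0)$, $\lambda=\bar F(p)$, and build the perturbed test function $\varphi^\eps(t,x)=\varphi(t,x)+\eps\,h\bigl(\eps^{-1}(\varphi(t,x))\bigr)$ (truncated and localized as needed). Because $h$ solves the cell problem, evaluating the $\eps$-level equation on $\varphi^\eps$ produces an inequality which, passed to a contact point between $\bar u^\eps$ and $\varphi^\eps$, would contradict $\varphi_t(t_0,x_0)>\bar F(\varphi_x(t_0,x_0))$; hence $\bar u$ is a subsolution of \eqref{eq:3}. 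The symmetric argument yields the supersolution property for $\underline u$. Together with a strong comparison principle for \eqref{eq:3} and the initial trace $\bar u(0,\cdot)=\underline u(0,\cdot)=u_0$, this forces $\bar u=\underline u=u^0$, giving locally uniform convergence and completing the proof.
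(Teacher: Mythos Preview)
Your outline captures the overall architecture (uniform bounds, cell problem, perturbed test function, comparison for the limit equation), but it misses the structural idea on which the whole paper rests, and without it Steps~2--4 do not go through as written.

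The gap is that you treat the accelerated system directly as a second-order problem. The system \eqref{eq:1} is \emph{not} order-preserving in the naive sense (ordering of positions, or of positions and velocities), so your Step~2 assertion that ``the system is order-preserving under $m_0\le m_0^c$'' has no content until you specify the order. The paper's key move is to introduce the auxiliary variable $\Xi_i=U_i+2m_0\,\dot U_i$ and rewrite \eqref{eq:1} as the coupled \emph{first-order} system
\[
\dot U_i=\alpha_0(\Xi_i-U_i),\qquad \dot\Xi_i=2F_i(U_{i-1},U_i,U_{i+1})+\alpha_0(U_i-\Xi_i),\quad \alpha_0=\tfrac{1}{2m_0},
\]
which is quasi-monotone precisely when $\alpha_0+2\partial_{V_0}F_i\ge0$ (this is Assumption~(A3) and is where $m_0^c$ enters). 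All the comparison principles, barriers, and gradient bounds in the paper are proved for this first-order system; none of them are available for your second-order formulation.

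This propagates to your cell problem. Your hull equation $m_0\lambda^2 h''+\lambda h'=\cdots$ is a genuinely second-order, possibly degenerate (when $\lambda=0$) functional ODE with no comparison structure, so invoking ``Perron's method in the class of nondecreasing functions'' is not justified. In the paper the hull function is a \emph{pair} $(h_j,g_j)$ solving the first-order system \eqref{eq:10n}; existence comes from an ergodicity argument for the Cauchy problem (Proposition~\ref{pro:11}) followed by Perron's method for the monotone first-order system (Proposition~\ref{pro:122}). Correspondingly, the perturbed test function in Step~4 must also be a pair $(\phi^\eps,\psi^\eps)$ built from $(h,g)$, and the comparison used is the one for the first-order system on bounded sets (Proposition~\ref{pro:pc}). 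A further point you do not address is that hull functions need not be continuous in $z$; the paper handles this by constructing Lipschitz approximate (sub/super) hull functions via a $\delta$-regularized Hamiltonian (Proposition~\ref{pro:139}), without which the estimate of $\mathcal I_{i,j}$ in the perturbed test function step fails.
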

\begin{rem}
The critical mass $m_0^c$ is made precise in Assumption (A3) below.
\end{rem}

\subsection{Example of systems with $n$ types of particles}

We now present the case of systems with $n$ types of particles.  Let us
start with the typical problem we have in mind. Let  $n\in
\N\backslash \left\{0\right\}$ be some integer and let us consider a sequence of real
numbers $(\theta_i)_{i\in\Z}$ such that
$$
\theta_{i+n}=\theta_i >0 \quad \mbox{for all}\quad i\in\Z \, .
$$
It is then natural to consider the generalized FK model with $n$
different types of particles that stay ordered on the real line. Then,
instead of satisfying \eqref{eq:1}, we can assume that $U_i$ satisfies
for $\tau \in (0,+\infty)$ and $i\in \Z$   
\begin{equation}\label{eq:1n}
  m_0 \frac {d^2 U_i}{d^2 \tau}+\frac{d U_i }{d\tau}
  = \theta_{i+1}(U_{i+1}-U_i) -\theta_i(U_i-U_{i-1}) + \sin \left(2\pi U_i\right) + L
\end{equation}

Such a model is sketched on figure \ref{f2}. As we shall see it, we can prove the same kind of 
homogenization results as Theorem \ref{th:0}.\medskip 
\begin{figure}[ht]
  \centering\epsfig{figure=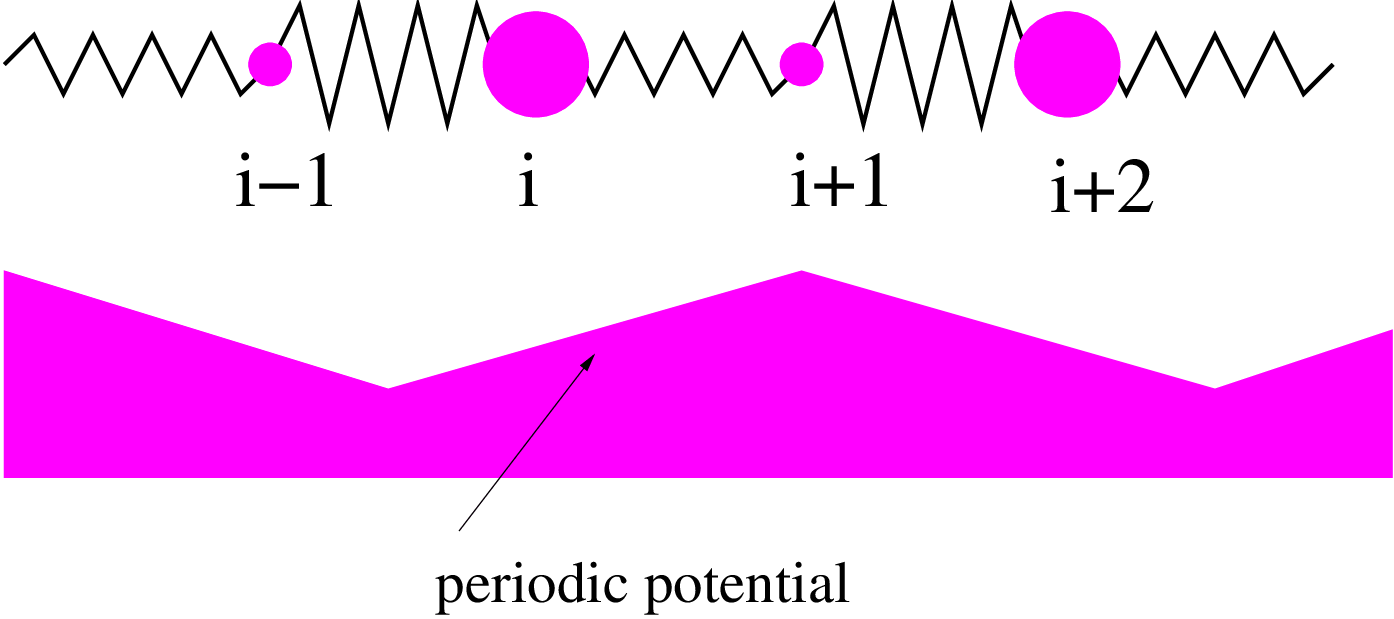,width=60mm}
  \caption{The FK model with $n=2$ type of particles (and of springs)
    and an interaction up to the $m=1$ neighbours}\label{f2}
\end{figure}

As we mentioned it before, it is crucial in our analysis to deal
with monotone systems of ODEs. Inspired of the work of Baesens and
MacKay \cite{BM04} and of Hu, Qin and Zheng \cite{HQZ}, we introduce
for all $i\in \Z$ the following function
$$
\Xi_i(\tau)=U_i(\tau)+2m_0\frac{d U_i}{d \tau}(\tau) \, .
$$
Using this new function, the system of ODEs \eqref{eq:1n} can be
rewritten in the following form: for $\tau \in (0,+\infty)$ and $i \in
\Z$,
$$
\left\{\begin{array}{l} \frac {dU_i}{d \tau}=\frac1{2 m_0} (\Xi_i-U_i) \\ \\
    \frac{d\Xi_i}{d\tau}=2\theta_{i+1}(U_{i+1}-U_i)-2\theta_i(U_i-U_{i-1})+2\sin
    (2\pi U_i)+2L+ \frac1{2m_0} (U_i-\Xi_i) \, .
\end{array}\right.
$$
We point out that, in compare with \cite{BM04,HQZ},  
our proof of the monotonicity of the system is simpler.
\bigskip

It is convenient to introduce the following notation
$$
\alpha_0 = \frac1{2 m_0} \, .
$$
\begin{rem}
  It would be also possible to consider more generally:
  $\Xi_i(\tau)=U_i(\tau) +\frac{1}{\alpha} \frac{dU_i}{d\tau}(\tau)$
  with $\frac{1}{\alpha}> m_0$. In order to simplify here the
  presentation, we choose $\alpha=1/(2m_0)$. Moreover, for the
  classical Frenkel-Kontorova model \eqref{eq:1}, the choice
  $\alpha=1/(2m_0)$ is optimal in the sense that the critical value
  $m_0^c$ for which the system is monotone is the best we can get.
\end{rem}

\subsection{General systems with $n$ types of particles}

More generally, we would like to study the generalized Frenkel-Kontorova
model \eqref{eq:fkg} with $n$ types of particles. 
In order to do so, let us consider a general sequence of functions
$v=(v_j(y))_{j\in\Z}$ satisfying
$$
v_{j+n}(y)= v_j(y+1) \, .
$$
For $m\in \N$, we set
$$
[v]_{j,m}(y)=(v_{j-m}(y),\dots,v_{j+m}(y)) \, .
$$
We are going to study a function
$$
(u,\xi)=((u_j(\tau,y))_{j\in \Z},(\xi_j(\tau,y))_{j\in \Z})
$$
satisfying the following system of equations: for all $(\tau,y)\in
(0,+\infty)\times \R$ and all $j\in\Z$,
\begin{equation}\label{eq:4n}
  \left\{\begin{array}{l}
      \left\{\begin{array}{l}
          (u_j)_\tau=\alpha_0( \xi_j-u_j)\\
          (\xi_j)_\tau=2F_j(\tau,[u(\tau,\cdot)]_{j,m})+\alpha_0 (u_j-\xi_j) \, ,
\end{array}\right.
\\ \\
\left\{ \begin{array}{l}
    u_{j+n}(\tau,y)= u_{j}(\tau,y+1)\\
    \xi_{j+n}(\tau,y)=\xi_j(\tau,y+1) \, .
\end{array}\right.
\end{array} \right.
\end{equation}
This system is referred to as \emph{the generalized Frenkel-Kontorova
  (FK for short) model}.  It is satisfied in the viscosity sense (see
Definition \ref{defi:2}). Moreover, we will consider viscosity
solutions which are possibly discontinuous.

Let us now make precise the assumptions on the functions
$F_j:\R \times \R^{2m+1}\to \R$ mapping $(\tau,V)$ to $F_j(\tau,V)$.
It is convenient to write $V\in \R^{2m+1}$ as $(V_{-m},\dots,V_m)$.
\begin{itemize}
\item[(A1)] {\bf (Regularity)}
$$
\left\{\begin{array}{l}
    F_j \mbox{ is continuous} \, , \\
    F_j \mbox{ is Lipschitz continuous in } V \mbox{ uniformly in }
    \tau \mbox{ and } j \, .
\end{array}\right.
$$
\item[(A2)] {\bf (Monotonicity in $V_i$, $i \neq 0$)}
$$
F_j(\tau,V_{-m},...,V_m) \mbox{ is non-decreasing in } V_i \quad
\mbox{ for } i \neq 0 \; .
$$
\end{itemize}

\begin{itemize}
\item[(A3)]{\bf (Monotonicity in $V_0$)}
$$
\alpha_0 + 2 \frac {\partial F_j}{\partial
    V_0}\ge 0 \quad \textrm{for all }j\in \Z\, .
$$
\end{itemize}
Keeping in mind the notation we chose above ($\alpha_0 = (2 m_0)^{-1}$), 
this assumption can be interpreted as follows: the mass has to be
small in comparison with the variations of the non-linearity, which
means that the system is sufficiently overdamped. This assumption
guarantees that $2F_j(\tau,V)+\alpha_0 V_0$ is non-decreasing in $V_0$
for all $j\in \Z$.
\begin{itemize}
\item[(A4)] {\bf (Periodicity)}
$$
\left\{\begin{array}{l}
    F_j(\tau,V_{-m}+1,  ...,V_m+1)=F_j(\tau,V_{-m},  ...,V_m) \, ,\\
    F_j(\tau+1, V)=F_j(\tau, V) \, .
\end{array}\right.
$$
 \item[(A5)] {\bf (Periodicity  of the type of particles)}
$$
F_{j+n}=F_j \quad \mbox{for all}\quad j\in \Z \, .
$$
\end{itemize}
When $n=1$, we explained in \cite{FIM2} that the system of ODEs can be
embedded into a single partial differential equation (more precisely,
in a single ordinary differential equation with a real parameter $x$).
Here, taking into account the ``$n$-periodicity'' of the indices
$j$, it can be embedded into $n$ coupled systems of equations.\medskip

 The next assumption allows us to guarantee that the ordering property of
the particles, \textit{i.e.}  $u_{j}\le u_{j+1}$, is preserved for all
time.
\begin{itemize}
\item[(A6)] {\bf (Ordering)} For all
  $(V_{-m},\dots,V_m,V_{m+1})\in\R^{2m+2}$ such that 
  $V_{i+1}\ge V_i$ for all $|i|\le m$, we have
$$
2F_{j+1}(\tau,V_{-m+1},\dots,V_{m+1})+ \a_0 V_1\ge 2F_{j}(\tau,V_{-m},\dots,V_m)+\a_0 V_0 \, .
$$
\end{itemize}
\begin{rem}
  If, for all $j\in \{1,\dots,n-1\}$, we have $F_{j+1}=F_j$ then assumption
  (A6) is a direct consequence of assumptions (A2) and (A3). Notice also
  that for $n \ge 1$, Condition~(A6') of Subsection~\ref{subsec}  does not
  allow us to take $\alpha_i= \frac1{2m_i}$ with different $m_i$'s. In
  particular, all the particles in our analysis have the same mass $m_0$.
\end{rem}
\begin{ex}
  We see that Assumptions~{\rm (A1)-(A5)} are in particular satisfied
  for the FK system (\ref{eq:1n}) with $n$ types of particles
  ($\theta_{n+j}=\theta_j$), $m=1$ and $F_j(\tau,V_{-1},V_0,V_1)=
  \theta_{j+1}(V_{1}-V_0) -\theta_j(V_0-V_{-1}) + \sin \left(2\pi
    V_0\right) + L$ for $\alpha_0 \ge
  2(\theta_j+\theta_{j+1})+4\pi$. To get {\rm (A6)} we have to assume
  furthemore that $\alpha_0 \ge 4\theta_j+4\pi$.
\end{ex}
We next rescale the generalized FK model: we consider for $\e>0$
$$\left\{\begin{array}{l}
    \displaystyle{u^\varepsilon_j(t,x)=\varepsilon u_j\left(\frac t \varepsilon,\frac x\varepsilon\right)}\\
    \\
    \displaystyle{\xi^\varepsilon_j(t,x)=\varepsilon \xi_j\left(\frac t \varepsilon,\frac x \varepsilon\right)} \, .
\end{array}\right.
$$
The function 
$(u^\eps,\xi^\e)=\left(\left(u^\eps_j(t,x)\right)_{j\in\Z},
  \left(\xi^\eps_j(t,x)\right)_{j\in\Z}\right)$ satisfies the following problem: for all $j\in\Z$,
$t >0$, $x \in \R$
\begin{equation}\label{eq:6n}
  \left\{\begin{array}{l}
      \left\{\begin{array}{l}
          (u^\eps_j)_t=\alpha_0 \frac {\xi^\e_j-u^\e_j}{\e} \\\\
          (\xi^\eps_j)_t=2F_j\left(\frac{t}{\eps},\left[\frac{u^\eps(t,\cdot)}{\eps}\right]_{j,m}\right)
          +\alpha_0\frac {u^\e_j-\xi^\e_j}\e \, .
        \end{array}\right.
      \\ \\
      \left\{\begin{array}{l}
          u_{j+n}^\eps(t,x)= u_{j}^\eps(t,x+\eps)\\
          \xi_{j+n}^\eps(t,x)= \xi_{j}^\eps(t,x+\eps)
        \end{array}\right.
\end{array}\right.
\end{equation}
We impose the following initial conditions
\begin{equation}\label{eq:6n-ic}
      \left\{\begin{array}{l}
          u^\eps_j(0,x)=u_0\left(x+\frac{j\eps}{n}\right)\\
          \xi^\eps_j(0,x)=\xi^\eps_0\left(x+\frac{j\eps}{n}\right) \, .
\end{array}\right.
\end{equation}
Finally, we assume that $u_0$ and $\xi_0^\e$ satisfy
\begin{itemize}
\item[\bf (A0)] {\bf (Gradient bound from below)} There exist $K_0>0$ and $M_0>0$ such that
\begin{eqnarray*}
  0< 1/K_0 \le (u_0)_x \le K_0 \quad \mbox{on}\quad \R \, , \\
  0< 1/K_0 \le (\xi_0^\e)_x \le K_0 \quad \mbox{on}\quad \R \, ,\\
  \|u_0-\xi_0^\e\|_\infty\le M_0\e \, .
\end{eqnarray*}
\end{itemize}
Then we have the following homogenization  result
\begin{theo}[\bf Homogenization of systems with $n$ types of
  particles] \label{th:3n} Assume that $(F_j)_j$ satisfies {\rm
    (A1)-(A6)}, and assume that the initial data $u_0, \xi_0^\e$
  satisfy {\rm(A0)}. Consider the solution $((u^\eps_j)_{j \in
    \Z},(\xi_j^\e)_{j \in \Z})$ of \eqref{eq:6n}-\eqref{eq:6n-ic}.
  Then, there exists a continuous function $\overline{F}:\R\mapsto \R$
  such that, for all integer $j \in \Z$, the functions $u^\eps_j$ and
  $\xi_j^\e$ converge uniformly on compact sets of $(0,+\infty)\times
  \R$ to the unique viscosity solution $u^0$ of (\ref{eq:3}).
\end{theo}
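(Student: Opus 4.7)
The plan is to follow the perturbed test function method of Evans, building on and extending \cite{FIM2} where the single-species fully overdamped case was handled. The argument splits into four steps: (i) set up the viscosity framework and comparison principle for the first-order system \eqref{eq:6n}, (ii) establish uniform $\eps$-independent Lipschitz estimates in space and time, (iii) construct a corrector (``hull function'') together with the effective Hamiltonian $\overline F(p)$, and (iv) pass to the limit via perturbed test functions.

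For (i), the introduction already records that the change of unknown $\xi_j = u_j + \alpha_0^{-1}(u_j)_\tau$ turns the second-order equation into the monotone first-order system \eqref{eq:4n}. Together, assumptions (A2) and (A3) make the non-diagonal drift cooperative in every entry, while (A6) ensures that the initial ordering $u_j \le u_{j+1}$ is propagated. A comparison principle for (possibly discontinuous) viscosity sub- and supersolutions in the sense of Definition \ref{defi:2} then follows by a now-standard doubling of variables adapted to systems. For (ii), the spatial Lipschitz bound inherited from (A0) is preserved by comparing $u^\eps_j(t,y)$ with $u^\eps_j(t,y+h)$ (and likewise for $\xi^\eps_j$), while the time-Lipschitz bound is obtained by comparing the solution with translates in $t$ of the initial data, using (A1) and the smallness condition $\|u_0-\xi_0^\eps\|_\infty\le M_0\eps$ to control the initial velocity.

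Step (iii) is where the bulk of the work lies. For every slope $p>0$ I would seek a unique real number $\overline F(p)$ together with bounded correctors $(\tilde h_j(\tau,y),\tilde H_j(\tau,y))_{j\in\Z}$, $1$-periodic in $\tau$ and satisfying the twist $\tilde h_{j+n}(\tau,y)=\tilde h_j(\tau,y+1)$ (and analogously for $\tilde H_j$), such that
\begin{equation*}
w_j(\tau,y) := \overline F(p)\tau + py + \tilde h_j(\tau,y), \qquad W_j(\tau,y) := \overline F(p)\tau + py + \tilde H_j(\tau,y)
\end{equation*}
solve \eqref{eq:4n} in the viscosity sense. Existence is obtained by an ergodic argument inspired by \cite{FIM2}: one solves \eqref{eq:4n} with the linear initial data $u_j(0,y)=\xi_j(0,y)=py$, and monotonicity combined with the periodicity assumptions (A4)-(A5) forces $u_j(\tau,y)-py$ to remain uniformly bounded in $(j,y)$ and to grow linearly in $\tau$ at rate $\overline F(p) = \lim_{\tau\to\infty} u_j(\tau,y)/\tau$, independent of $(j,y)$. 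The correctors are then extracted by compactness, and continuity together with monotonicity of $p\mapsto \overline F(p)$ follow from comparing solutions with different slopes. The new difficulties with respect to \cite{FIM2} are twofold: the inertia term forces one to propagate the pair $(u,\xi)$ simultaneously, and the $n$-species structure forces the corrector to live on an enlarged torus which keeps track of the index $j$ modulo $n$; assumption (A6) is precisely what allows to compare profiles with different slopes across species.

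Finally, for (iv), let $\overline u,\underline u$ denote the half-relaxed upper/lower limits of $u^\eps_j$ as $\eps\to 0$ (the $\xi^\eps_j$ share these limits, thanks to the bound on $u^\eps_j-\xi^\eps_j$ coming from (A0) and the first equation of \eqref{eq:6n}). At a point where a smooth $\phi$ touches $\overline u$ from above with slope $p=\phi_x$, I correct $\phi(t,x)$ into $\phi(t,x)+\eps\tilde h_j(t/\eps,x/\eps)$ and use the corrector equation from Step (iii), together with the monotonicity of the underlying system, to absorb the $O(1)$ error terms, obtaining $\phi_t\le \overline F(\phi_x)$ at the contact point; the supersolution inequality is symmetric. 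The initial trace is recovered using (A0), and the standard comparison principle for the Hamilton-Jacobi equation \eqref{eq:3} with continuous $\overline F$ and Lipschitz $u_0$ forces $\overline u=\underline u=u^0$, yielding the announced locally uniform convergence. The main obstacle is, as expected, Step (iii): the combination of inertia and multiple species precludes a direct appeal to \cite{FIM2}, so a genuinely new ergodic construction on the enlarged torus must be carried out, with (A6) playing the pivotal role in the cross-species comparison of hull functions.
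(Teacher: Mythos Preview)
Your Steps (i)--(iii) track the paper closely: the comparison principle for the monotone system, the uniform barriers and gradient bounds, and the ergodic construction of the hull function via the long-time behaviour of the Cauchy problem with linear data are exactly what Sections~2, 4 and~5 do, and you correctly single out (A6) as the ingredient that propagates the ordering in $j$.

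The genuine gap is in Step (iv). Your additive ansatz
\[
\phi(t,x)+\eps\,\tilde h_j\!\left(\tfrac{t}{\eps},\tfrac{x}{\eps}\right)
\]
is not adapted to this equation. The nonlinearity in \eqref{eq:6n} is $F_j\big(t/\eps,[u^\eps/\eps]_{j,m}\big)$: by (A4) it oscillates in the \emph{value} $u^\eps/\eps$, not in $x/\eps$. Plugging your ansatz in, the argument of $F_j$ becomes $\phi(t,x)/\eps+\tilde h_{j+l}(t/\eps,x/\eps)$, whereas your cell equation carries $\lambda\tau+py+\tilde h_{j+l}(\tau,y)$ in that slot. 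Modulo integers the discrepancy is $\big(\phi(t,x)-\lambda t-px\big)/\eps$, which near the contact point equals $(\phi(\bar t,\bar x)-\lambda\bar t-p\bar x)/\eps+o(1)$: an $O(1/\eps)$ constant whose fractional part is uncontrolled as $\eps\to 0$. The resulting error in $F_j$ is $O(1)$ and cannot be absorbed by the $\theta$-margin or by monotonicity.

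The paper avoids this by the \emph{twisted} perturbed test functions of \cite{IM},
\[
\phi^\eps_i(t,x)=\eps\,h_i\!\left(\frac{t}{\eps},\frac{\phi(t,x)}{\eps}\right),\qquad
\psi^\eps_i(t,x)=\eps\,g_i\!\left(\frac{t}{\eps},\frac{\phi(t,x)}{\eps}\right),
\]
with $(h_i,g_i)$ the hull function of Definition~\ref{defi:1n} in the variable $z$; then $[\phi^\eps/\eps]_{i,m}$ becomes precisely $[h(\tau,\cdot)]_{i,m}(z)$ at $z=\phi/\eps$, which is what the hull equation provides. Two further devices you do not mention are also needed: one takes $\lambda=\overline F(p)+\theta/2=\overline F(\overline L,p)$ for some $\overline L>0$ (using the $L$-monotonicity of Theorem~\ref{th:4}), so that the extra $2\overline L$ absorbs the coupling errors $\mathcal I_{i,j}=h_{i+j}(\tau,z)-\phi^\eps_{i+j}/\eps$ coming from the shift $x\mapsto x+\eps l$; and since hull functions may be discontinuous in $z$, they are replaced by Lipschitz super/sub-hull functions built from a $\delta$-regularised cell problem (Propositions~\ref{pro:135}--\ref{pro:139}), which is what makes those error terms $o_r(1)$.
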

\begin{rem}
  The reader can be surprised by the fact that we obtain, at the
  limit, only one equation to describe the evolution of the system. In
  fact, this essentially comes from Assumption~{\rm (A6)} and the definition
  of $\xi_j^\e$. Indeed, it could be shown that assumption (A6)
  implies that the functions $u^\e$ and $\xi^\e$ are non-decreasing with
respect to $j$:  $u^\e_{j+1}\ge u^\e_j$ and $\xi^\e_{j+1}\ge
  \xi^\e_j$. Then, the system can be essentially sketched by only two
  equations (one for the evolution of $u$ and one for $\xi$). 
But by the ``microscopic definition'' of $\xi_j^\e$, we have
  $\xi_j^\e=u_j^\e+O(\e)$; hence only one equation is sufficient to
  describe the macroscopic evolution of all the system.
\end{rem}
\begin{rem}
  The case $m_0=0$ corresponds to $\a_0=+\infty$. In this case,
  $u^\e\equiv \xi^\e$ in \eqref{eq:6n} and Theorem \ref{th:3n} still
  holds true.
\end{rem}
We will explain in the next subsection how the non-linearity $\bar{F}$,
known as the effective Hamiltonian, is determined. We will see that this
has to do with the existence of solutions of \eqref{eq:6n},
\eqref{eq:6n-ic} of a specific form. They are constructed thanks to
functions referred to as hull functions.

\subsection{Hull functions}

In this subsection, we introduce the notion of {\it hull function} for
System~\eqref{eq:4n}.  More precisely, we look for special
functions $((h_j(\tau,z))_{j\in\Z}, (g_j(\tau,z))_{j\in \Z}$ such that
$(u_j(\tau,y),\xi_j(\tau,y)) = (h_j(\tau,py +\lambda \tau), g_j(\tau,py
+\lambda \tau))$ is a solution of \eqref{eq:4n} on $\Omega =
(-\infty,+\infty) \times \R=\R^2$. Here is a precise definition.
\begin{defi}[\bf Hull function for systems of $n$ types of
  particles] \label{defi:1n} ~ \newline 
Given $(F_j)_j$ satisfying {\rm
    (A1)-(A6)}, $p\in (0,+\infty)$ and a number $\lambda\in\R$, we say
  that a family of functions $((h_j)_j, (g_j)_j)$ is \emph{a hull
    function} for \eqref{eq:4n} if it satisfies for all
  $(\tau,z)\in\R^2$, $j\in \Z$
\begin{equation}\label{eq:10n}
\begin{array}{ll}
  \left\{\begin{array}{l}
      (h_j)_\tau+ \lambda (h_j)_z=\alpha_0(g_j-h_j) \\
      \\
      h_j(\tau+1,z)=h_j(\tau,z)\\
      h_j(\tau,z+1)=h_j(\tau,z)+1\\
      h_{j+n}(\tau,z)= h_{j}(\tau,z+p)   \\ 
      h_{j+1}(\tau,z)\ge h_{j}(\tau,z)\\
      (h_j)_z(\tau,z)\ge0\\
      \exists C \; {\rm s.t.}\; |h_j(\tau,z)-z|\le C 
\end{array}\right.
&
\left\{\begin{array}{l}
    (g_j)_\tau+ \lambda (g_j)_z=2F_j(\tau,[h(\tau,\cdot)]_{j,m}(z))+\alpha_0(h_j-g_j) \\
    \\
    g_j(\tau+1,z)=g_j(\tau,z)\\
    g_j(\tau,z+1)=g_j(\tau,z)+1\\
g_{j+n}(\tau,z)= g_{j}(\tau,z+p)   \\ 
 g_{j+1}(\tau,z)\ge g_{j}(\tau,z)\\
    (g_j)_z(\tau,z)\ge0\\
    \exists C \; {\rm s.t.}\; |g_j(\tau,z)-z|\le C \, .
\end{array}\right.
\end{array}
\end{equation}
In the case where the functions $(F_j)_j$ do not depend on $\tau$, we also require
that the hull function $((h_j)_j,(g_j)_j)$ is independent on $\tau$
and we denote it by $((h_j(z))_j,(g_j(z))_j)$.
\end{defi}
\begin{rem}\label{rem:osc-hull}
  The last line of \eqref{eq:10n} implies in particular that $\e
  h_j(\tau, \frac z \e)\to z$ and $\e g_j(\tau, \frac z \e)\to z$ as
  $\e \to 0$.
\end{rem}
Given $ p >0$, the following theorem explains how the effective
Hamiltonian $\overline{F} (p)$ is determined by an
existence/non-existence result of hull functions as $\lambda \in \R$
varies.
\begin{theo}[\bf Effective Hamiltonian and hull function] \label{th:2}
  Given $(F_j)_j$ satisfying {\rm (A1)-(A6)} and $p\in (0,+\infty)$,
  there exists a unique real number $\lambda$ for which there exists a hull
  function $((h_j)_j,(g_j)_j)$ (depending on $p$) satisfying
  \eqref{eq:10n}.  Moreover the real number $\lambda=\overline{F}(p)$, seen as a
  function of $p$, is continuous in $(0,+\infty)$.
\end{theo}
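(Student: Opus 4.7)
My plan is to produce $\overline{F}(p)$ as a long-time average and then extract the hull function as an asymptotic profile. Fix $p\in(0,+\iy)$ and solve \eqref{eq:4n} on $(0,+\iy)\times\R$ with the initial data $u_j(0,y)=\xi_j(0,y)=py+jp/n$. A direct check shows that this data is compatible with the key periodicities of \eqref{eq:10n}: $u_j(0,y+1/p)=u_j(0,y)+1$ and $u_{j+n}(0,y)=u_j(0,y+1)$; assumptions (A4)--(A5), combined with the comparison principle for the monotone reformulation of \eqref{eq:4n} ensured by (A3), propagate both relations along the flow, and (A6) preserves $u_j\le u_{j+1}$.

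The core step is a \emph{uniform oscillation bound}: there exists a unique $\l=\l(p)\in\R$ with
\[
\sup_{\tau>0,\,y\in\R,\,j\in\Z}\bigl|u_j(\tau,y)-py-\tfrac{jp}{n}-\l\tau\bigr|<+\iy,
\]
and the same for $\xi_j$. I extract $\l$ by comparing $u_j(\tau,\cdot)$ with the time-shifted solution $u_j(\tau+k,\cdot)-\ell$ (for $k,\ell\in\Z$), which is again a solution of \eqref{eq:4n} by (A4); a Fekete-type subadditivity argument then produces the rotation number $\l:=\lim_{\tau\to\iy}u_j(\tau,y)/\tau$ independent of $(j,y)$. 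The same comparison chain bounds the oscillation. I then set $\overline{F}(p):=\l(p)$.

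With $\l$ in hand, define, for $T\in\N$,
\[
h_j^T(\tau,z):=u_j\bigl(\tau+T,\tfrac{z-\l(\tau+T)}{p}\bigr),\qquad g_j^T(\tau,z):=\xi_j\bigl(\tau+T,\tfrac{z-\l(\tau+T)}{p}\bigr).
\]
The oscillation bound gives $|h_j^T(\tau,z)-z-jp/n|\le C$ uniformly in $T$, while Lipschitz regularity in $(\tau,z)$ is inherited from the flow. Arzel\`a--Ascoli with a diagonal extraction on $j\in\{0,\dots,n-1\}$ (the other indices being recovered from $h_{j+n}(\tau,z)=h_j(\tau,z+p)$) produces a subsequence $T_k\to\iy$ along which $(h_j^{T_k},g_j^{T_k})\to(h_j,g_j)$ locally uniformly. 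Stability of viscosity solutions yields the PDE of \eqref{eq:10n}; the 1-periodicity in $\tau$ comes from $T_k\in\N$ together with (A4), and the remaining constraints pass to the limit directly.

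Uniqueness of $\l$ is a standard comparison argument: two hull functions with speeds $\l_1<\l_2$ would, via the ansatz $u_j(\tau,y)=h_j(\tau,py+\l\tau)$, produce two solutions of \eqref{eq:4n} separating linearly at rate $\l_2-\l_1$, contradicting comparison once initial data are aligned modulo $\Z$. Continuity of $\overline{F}$ follows from the stability of the construction: any limit point of $\l(p_k)$ as $p_k\to p$ is itself the speed of a hull function at slope $p$, hence equals $\l(p)$ by uniqueness. The \textbf{main obstacle} is the uniform oscillation bound itself, to be derived without any prior knowledge of $\l$; unlike the overdamped case \cite{FIM2}, one must propagate estimates jointly across the coupled pair $(u,\xi)$ in the monotone reformulation while simultaneously tracking the $n$-periodic index shift, which is the technical heart of the argument.
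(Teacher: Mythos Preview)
Your overall strategy---solve the Cauchy problem with affine data, extract $\lambda$ by subadditivity, pass to a time-shifted limit to get the hull function, then argue uniqueness and continuity by comparison and stability---is exactly the paper's route (Propositions~\ref{pro:11}, \ref{pro:122}, \ref{pro:129}). The gap is in the extraction step.

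You assert that ``Lipschitz regularity in $(\tau,z)$ is inherited from the flow'' and then invoke Arzel\`a--Ascoli on $(h_j^{T_k},g_j^{T_k})$. This is not justified for the unperturbed system \eqref{eq:4n}: differentiating in $y$ yields a linear system with bounded coefficients, so Gronwall only gives a Lipschitz constant of order $e^{CT}$, which is useless as $T\to\infty$. The paper in fact points out (opening of Section~\ref{s5}) that for some Hamiltonians \emph{every} hull function is necessarily discontinuous in $z$, so the equicontinuity you need is simply false in general. Nor does viscosity stability rescue the argument as written: without equicontinuity you only have the half-relaxed limits $\limsup^* h_j^{T_k}$ and $\liminf_* h_j^{T_k}$, which are respectively a sub-solution and a super-solution, not a solution, and there is no reason they coincide.

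The paper's remedy (Proposition~\ref{pro:122}) is to use those half-relaxed limits merely as barriers: one first takes a further sup/inf over the integer $(\tau,z)$-shifts encoding the periodicities in \eqref{eq:123} to make the barriers translation-invariant, then runs Perron's method between them to produce a (possibly discontinuous) viscosity solution, and finally takes infima over nonnegative $y$-shifts and over $j$-shifts (this last step using (A6)) to force monotonicity in $z$ and in $j$. The Lipschitz hull functions you seem to expect are only available for the $\delta$-regularized Hamiltonian of Section~\ref{s5}, where an added gradient term provides a genuine a priori bound $(h_j)_z\le 1+2L_F/(p\delta)$ (Proposition~\ref{pro:135}); that regularization is what makes an Arzel\`a--Ascoli argument legitimate there, but it is a separate construction from the hull function of Theorem~\ref{th:2}.
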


\subsection{Qualitative properties of the effective Hamiltonian}

We have moreover the following result
\begin{theo}[\bf Qualitative properties of
  $\overline{F}$] \label{th:4} Let $(F_j)_j$ satisfying
  {\rm(A1)-(A6)}. For any constant $L\in\R$, let $\overline{F}(L,p)$ denote the effective
  Hamiltonian given in Theorem \ref{th:2} for $p\in (0,+\infty)$,
  associated with $(F_j)_j$ replaced by $(L+F_j)_j$ .

  Then $(L,p) \mapsto \overline{F}(L,p)$ is continuous and we have the following properties\\
  {\bf (i)} {\bf (Bound)} we have
$$
|\overline{F}(L,p)-L|\le C_p \, .
$$
{\bf (ii)} {\bf (Monotonicity in $L$)}
$$
\overline{F}(L,p) \quad  \mbox{is non-decreasing in}\quad  L \, .
$$
\end{theo}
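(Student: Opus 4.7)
The plan is to address the three claims in sequence: (i) by directly integrating the hull-function equations, (ii) by applying the homogenization theorem \ref{th:3n} to affine initial data, and joint continuity by a compactness argument on hull functions combined with a sandwich via (ii).

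For~(i), I integrate the system \eqref{eq:10n} (with $F_j$ replaced by $L+F_j$) over the fundamental periodicity cell $(\tau,z)\in[0,1]^2$. The $\tau$-periodicity and the relations $h_j(\tau,z+1)=h_j(\tau,z)+1$, $g_j(\tau,z+1)=g_j(\tau,z)+1$ give $\iint_{[0,1]^2}(h_j)_\tau=\iint_{[0,1]^2}(g_j)_\tau=0$ and $\iint_{[0,1]^2}(h_j)_z=\iint_{[0,1]^2}(g_j)_z=1$. Integrating both equations of \eqref{eq:10n} and adding the resulting relations, the $\alpha_0$ terms cancel and I obtain the key identity
\[
\lambda - L \;=\; \iint_{[0,1]^2} F_j\bigl(\tau,[h(\tau,\cdot)]_{j,m}(z)\bigr)\,d\tau\,dz.
\]
By (A4), $F_j(\tau,V)$ is invariant under simultaneous unit translation of all components of $V$, so together with~(A1) its value depends continuously only on $V_0\bmod 1$ and on the differences $V_i-V_0$. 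The bound $|h_{j'}(\tau,z)-z|\le C$ from \eqref{eq:10n} controls these differences uniformly, hence the integrand is bounded by a constant $C_p$ depending only on $p$, proving~(i).

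For~(ii), I appeal to Theorem \ref{th:3n} with the affine initial datum $u_0(x):=px$, $\xi_0^\e(x):=px$; this satisfies~(A0) with $K_0=\max(p,1/p)$ and $M_0=0$. For $L_1\le L_2$, let $(u^{\e,L_i},\xi^{\e,L_i})$ denote the solutions of \eqref{eq:6n}--\eqref{eq:6n-ic} with $F_j$ replaced by $L_i+F_j$. Since~(A3) makes the system monotone and $L_1+F_j\le L_2+F_j$ pointwise, the $L_1$-solution is a subsolution of the $L_2$-problem, so comparison yields $u^{\e,L_1}_j\le u^{\e,L_2}_j$ and $\xi^{\e,L_1}_j\le \xi^{\e,L_2}_j$. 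Passing $\e\to 0$ via Theorem \ref{th:3n}, the ordering persists for the limits $u^0_{L_i}(t,x)=px+\overline{F}(L_i,p)\,t$ (the unique viscosity solution of \eqref{eq:3} with this affine datum), and one concludes $\overline{F}(L_1,p)\le \overline{F}(L_2,p)$.

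Continuity in $p$ for fixed $L$ is already part of Theorem \ref{th:2}. To obtain continuity in $L$ for fixed $p$, I consider a sequence $L_k\to L$; the bound~(i) shows $\lambda_k := \overline{F}(L_k,p)$ is uniformly bounded, so up to a subsequence $\lambda_k\to \lambda_*$. The associated hull functions $(h^k,g^k)$ are monotone in $z$ and satisfy $|h^k_{j'}(\tau,z)-z|\le C$, so Helly's selection theorem in $z$, combined with the half-relaxed-limits technique in $\tau$ to pass to the limit in the hull equations in the viscosity sense, produces a subsequential limit $(h^*,g^*)$ that is a hull function for $L$ with speed $\lambda_*$; the uniqueness part of Theorem \ref{th:2} then forces $\lambda_*=\overline{F}(L,p)$, so the whole sequence converges. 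Joint continuity of $(L,p)\mapsto\overline{F}(L,p)$ then follows from~(ii) by sandwiching: for $(L_k,p_k)\to(L,p)$ and $\delta>0$,
\[
\overline{F}(L-\delta,p_k)\le \overline{F}(L_k,p_k)\le \overline{F}(L+\delta,p_k)\qquad\text{for $k$ large,}
\]
letting $k\to\infty$ by continuity in $p$ and then $\delta\to 0$ by continuity in $L$. The principal obstacle will be the hull-function compactness used in the continuity-in-$L$ step, since hull functions carry monotonicity only in $z$ with no a priori modulus in $\tau$, forcing the passage to the limit to go through viscosity stability rather than uniform convergence.
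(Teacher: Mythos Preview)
Your argument for (ii) is circular: you invoke Theorem~\ref{th:3n} to extract the ordering $\overline{F}(L_1,p)\le\overline{F}(L_2,p)$ from the homogenization limit, but the proof of Theorem~\ref{th:3n} in Section~\ref{s3} explicitly uses Theorem~\ref{th:4} (it needs the monotonicity and the bound in~$L$ to find $\overline L>0$ with $\overline{F}(\overline L,p)=\overline{F}(p)+\theta/2$). The fix is straightforward and is essentially what the paper does: run the same comparison argument at the \emph{unrescaled} level. With $u_j(\tau,y)=h_j(\tau,\lambda\tau+py)$, $\xi_j(\tau,y)=g_j(\tau,\lambda\tau+py)$ built from the hull function for $L_1$, one gets a subsolution of the $L_2$-problem; comparing with the $L_2$-hull function on $[0,\infty)\times\R$ and using $|h_j-z|\le C$ (exactly the mechanism in Step~1 of the proof of Proposition~\ref{pro:129}) yields $\lambda_1\le\lambda_2$ directly, without passing through the $\varepsilon\to 0$ limit.

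Your approach to (i) is different from the paper's (which simply adapts the barrier bound~\eqref{eq:boundlambda}) and gives the pleasant identity $\lambda-L=\iint_{[0,1]^2}F_j(\tau,[h]_{j,m})$. The concern is regularity: the hull functions of Definition~\ref{defi:1n} are viscosity solutions and may be discontinuous (this is precisely the reason Section~\ref{s5} exists), so integrating the PDE over $[0,1]^2$ is not automatically licit. You can make your route rigorous by running the integration on the Lipschitz approximate hull functions of Proposition~\ref{pro:135} (which satisfy the equations a.e.) and then letting $\delta\to 0$ using Proposition~\ref{pro:139}; alternatively, just quote the a~priori bound $|\lambda|\le C_4$ from~\eqref{eq:140ante} and track the $L$-dependence of $C_4$ in~\eqref{eq:C4}, which is the paper's approach.

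Your continuity argument (half-relaxed limits on hull functions plus uniqueness of $\lambda$) is essentially the same as the paper's, which refers to Step~2 of the proof of Proposition~\ref{pro:129}; the Helly step is not needed since half-relaxed limits already handle the passage to the limit in the viscosity sense.
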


\subsection{Organization of the article}

In Section \ref{s2}, we give some useful results concerning viscosity
solutions for systems. In Section \ref{s3}, we prove the convergence
result assuming the existence of hull functions. The construction of
hull functions is given in Sections \ref{s4} and \ref{s5}. Finally,
Section \ref{s6} is devoted to the proof of the qualitative
properties of the effective Hamiltonian.

\subsection{Notation}

Given $r,R>0$, $t \in \R$ and $x \in \R$,  $Q_{r,R}(t,x)$ denotes 
the following neighbourhood of $(t,x)$
$$
Q_{r,R}(t,x) = (t-r,t+r)\times (x-R,x+R) \, .
$$

For $V=(V_1,\dots, V_N) \in \R^N$, $|V|_\infty$ denotes $\max_j
|V_j|$.  Given a family of functions $(v_j (\cdot))_{j \in \Z}$ and two
integers $j,m \in \Z$, $[v]_{j,m}$ denotes the function $(v_{j-m}
(\cdot), \dots, v_{j+m} (\cdot))$.

\section{Viscosity solutions}\label{s2}
\setcounter{equation}{0}

This section is devoted to the definition of viscosity solutions for
systems of equations such as \eqref{eq:4n}, \eqref{eq:6n} and
\eqref{eq:10n}. In order to construct hull functions when proving
Theorem~\ref{th:2}, we will also need to consider a perturbation of
\eqref{eq:4n} with linear plus bounded initial data. For all these
reasons, we define a viscosity solution for a generic equation whose
Hamiltonian $(G_j)_j$ satisfies proper assumptions.

Before making precise assumptions, definitions and crucial results
we will need later (such as stability, comparison principle,
existence), we refer the reader to the user's guide of Crandall,
Ishii, Lions \cite{CIL} and the book of Barles \cite{B} for an
introduction to viscosity solutions and \cite{CDE,LEN,I,IK91} and
references therein for results concerning viscosity solutions for
systems of weakly coupled partial differential equations.

\subsection{Main assumptions and definitions}
\label{subsec}

As we mentioned it before, we consider systems with 
general non-linearities $(G_j)_j$. Precisely, 
for $0< T\le +\infty$, we consider the following Cauchy problem: for
$j\in\Z$, $\tau >0$ and $y \in \R$,
\begin{equation}\label{eq:22n}
  \left\{\begin{array}{l}
      \left\{\begin{array}{l}
          (u_j)_\tau= \alpha_0 (\xi_j-u_j) \\
          (\xi_j)_\tau=G_j(\tau, [u(\tau,\cdot)]_{j,m},\xi_j,
          \inf_{y'\in\R}\left(\xi_j(\tau,y')-py'\right) 
          +py-\xi_j(\tau,y),(\xi_j)_y)
        \end{array} \right. 
      \\ \\
      \left\{ \begin{array}{l}          
          u_{j+n}(\tau,y)= u_{j}(\tau, y+1)\\
          \xi_{j+n}(\tau,y)=\xi_j(\tau,y+1)
        \end{array}  \right. 
\end{array}\right. 
\end{equation}
 submitted to the initial
conditions
\begin{equation}\label{eq:22n-ic}
      \left\{ \begin{array}{l}          
          u_j(0,y)=u_{0}(y+\frac j n):= u_{0,j}(y) \\
          \xi_j(0,y)=\xi_{0}(y+\frac j n):=\xi_{0,j}(y) \, .
          \end{array}  \right. 
\end{equation}
\begin{ex}
  The most important example we have in mind is the following one
$$
G_j(\tau,V_{-m}, \cdots, V_m
,r,a,q)=2F_j(\tau,V)+\alpha_0 (V_0-r)+\delta (a_0+a)q^+
$$
for some constants $\delta\ge 0$, $a_0,a,q\in \R$ and where $F_j$
appears in \eqref{eq:4n},\eqref{eq:6n}, \eqref{eq:10n}.
\end{ex}
In view of \eqref{eq:22n}, it is clear that in the case where $G_j$ effectively
depends on the variable $a$, solutions must be such that the infimum
of $\xi_j(\tau,y) - p \cdot y$ is finite for all time $\tau$. Hence, when $G_j$
does depend on $a$, we will only consider solutions $\xi_j$ satisfying for some $C_0(T)>0$:
for all $\tau \in [0,T)$ and all $y,y' \in \R$
\begin{equation}\label{eq:24}
|\xi_j(\tau,y+y')-\xi_j(\tau,y)-py'| \le C_0 \; . 
\end{equation}
When $T=+\infty $, we may assume that \eqref{eq:24} holds true for all time $T_0>0$ for 
a family of constants $C_0>0$. 

Since we have to solve a Cauchy problem, we have to assume that the initial
datum satisfies the assumption
\begin{itemize}
\item[(A0')] {\bf (Initial condition)}

\noindent
$(u_{0}, \xi_{0})$ satisfies (A0) (with $\e=1$); it also satisfies \eqref{eq:24} if $G_j$ depends on $a$ for some $j$. 
\end{itemize}

As far as the $(G_j)_j$'s are concerned, we make the following assumptions.
\begin{itemize}
\item[(A1')] {\bf (Regularity)}
\begin{itemize}
\item[(i)] $G_j$ is continuous.  
\item[(ii)]
For all $R>0$, there exists $L_0=L_0 (R)>0$ such that for all $\tau,V,W,r,s,a,q_1, q_2,j$, with $a\in [-R, R]$, we have
$$
|G_j(\tau,V,r,a,q_1) - G_j (\tau,W,s,a,q_2)| \le L_0|V-W|_\infty + L_0 |r-s|+L_0|q_1-q_2| \, .
$$ 
\item[(iii)]
There exists $L_1>0$ such that for all $V,a,b,\tau,r,q$,
$$
| G_j(\tau,V,r,a,q) - G_j (\tau,V,r,b,q)| \le L_1 |a-b| |q| \, .
$$
\end{itemize}
\item[(A2')] {\bf (Monotonicity in $V_i$, $i\ne 0$)}
$$
G_j(\tau,V_{-m},...,V_m,r,a,q) \mbox{ is non-decreasing in }  V_i  \mbox{ for } i \ne 0.
$$
\item[(A3')] {\bf (Monotonicity in $a$ and $V_0$)}
$$
G_j(\tau,V_{-m},...,V_m,r,a,q) \mbox{ is non-decreasing in } a \mbox{ and in } V_0.
$$
\item[(A4')] {\bf (Periodicity)} For all $(\tau,V,r,a,q)\in \R\times
  \R^{2m+1}\times\R\times \R\times \R$ and $j\in\{1,\dots,n\}$
$$
\left\{\begin{array}{l}
    G_j(\tau,V_{-m}+1,  ...,V_m+1,r+1,a,q)=G_j(\tau,V_{-m},  ...,V_m,r,a,q) \, ,\\
    G_j(\tau+1, V,r,a,q)=G_j(\tau, V,r,a,q) \, .
\end{array}\right.
$$
 \item[(A5')] {\bf (Periodicity  of the type of particles)}
$$
G_{j+n}=G_j \quad \mbox{for all}\quad j\in \Z \, .
$$
\item[(A6')] {\bf (Ordering)} For all $(V_{-m},\dots,V_m,V_{m+1})\in\R^{2m+2}$
such that  $\forall i, V_{i+1}\ge V_i$, we have
$$
G_{j+1}(\tau,V_{-m+1},\dots,V_{m+1},r,a,q)\ge G_{j}(\tau,V_{-m},\dots,V_m,r,a,q) \, .
$$

\end{itemize}
Finally, we recall the definition of the upper and lower semi-continuous envelopes, $u^*$ and $u_*$, 
of a locally bounded function $u$.
$$
u^*(\tau,y)=\limsup_{(t,x)\to (\tau,y)} u(t,x) \quad \text{ and } \quad 
u_*(\tau,y)=\liminf_{(t,x)\to (\tau,y)} u(t,x) \, .
$$
We can now define viscosity solutions for \eqref{eq:22n}.
\begin{defi}[\bf Viscosity solutions] \label{defi:2} Let $T>0$ and
  $u_{0}:\R\to \R$ and $\xi_{0}:\R\to \R$ be such that {\rm (A0')} is
  satisfied. For all $j$, consider locally bounded functions
  $u_j:\R^+\times \R\to \R$ and $\xi_j:\R^+\times \R\to \R$. We denote
  by $\Omega=(0,T]\times \R$.
\begin{itemize}
\item The function $((u_j)_j,(\xi_j)_j)$ is a \emph{sub-solution}
  (resp. a \emph{super-solution}) of \eqref{eq:22n} on $\Omega$ if
\eqref{eq:24} holds true for $\xi_j$ in the case where $G_j$  depends on $a$, and
$$
\forall j,n, \forall (\tau,y),  \quad 
u_{j+n}(\tau,y)=u_j(\tau,y+1), \quad \xi_{j+n}(\tau,y)=\xi_j(\tau,y+1)
$$ 
and for all $j\in \{1,\dots,n\}$, $u_j$ and $\xi_j$ are 
upper semi-continuous (resp. lower semi-continuous),  
and for all $(\tau,y)\in \Omega$ and any test function
  $\phi \in C^1(\Omega)$ such that $u_j-\phi$ attains a  local maximum
  (resp. a local minimum) at the point $(\tau,y)$, then we have
\begin{equation}\label{eq:201}
  \phi_\tau(\tau,y) \le
  \alpha_0 (\xi_j(\tau,y)-u_j(\tau,y) )
  \quad (\text{resp. } \ge ) 
\end{equation}
and for all $(\tau,y)\in \Omega$ and any test function
  $\phi \in C^1(\Omega)$ such that $\xi_j-\phi$ attains a  local maximum
  (resp. a local minimum) at the point $(\tau,y)$, then we have
\begin{equation}\label{eq:201bis}
  \phi_\tau(\tau,y)\le
  G_j(\tau,[u(\tau,\cdot)]_{j,m}(y),\xi_j(\tau,y),
  \inf_{y'\in\R}\left(\xi_j(\tau,y')-py'\right)+py-\xi_j(\tau,u),\phi_y(\tau,y))
\end{equation}
$$(\text{resp. } \ge ).$$
\item The function $((u_j)_j,(\xi_j)_j)$ is a
  \emph{sub-solution} (resp. \emph{super-solution}) of \eqref{eq:22n},\eqref{eq:22n-ic}
  if $((u_j)_j,(\xi_j)_j)$ is a sub-solution (resp. super-solution) on
  $\Omega$ and if it satisfies moreover for all $y \in
  \R, \; j\in \{1,\dots,n\}$
\begin{eqnarray*}
u_j(0,y)\le u_{0}(y+\frac j n) \quad (\text{resp. } \ge) \; , \\
\xi_j(0,y)\le \xi_{0}(y+\frac j n) \quad (\text{resp. } \ge) \; .
\end{eqnarray*}
\item A function $((u_j)_j,(\xi_j)_j)$ is a \emph{viscosity solution}
  of (\ref{eq:22n}) (resp. of \eqref{eq:22n},\eqref{eq:22n-ic}) if
  $((u^*_j)_j,(\xi^*_j)_j)$ is a sub-solution and
  $(((u_j)_*)_j,((\xi_j)_*)_j)$ is a super-solution of (\ref{eq:22n})
  (resp. of \eqref{eq:22n},\eqref{eq:22n-ic}).
\end{itemize}
\end{defi}
Sub- and super-solutions satisfy the following comparison principle
which is a key property of the equation.
\begin{pro}[{\bf Comparison principle}] \label{pro:3} ~ \newline Assume
  {\rm (A0')} and that $(G_j)_j$ satisfy {\rm (A1')-(A5')}.  Let
  $(u_j,\xi_j)$ (resp. $(v_j, \zeta_j)$) be a sub-solution (resp. a
  super-solution) of \eqref{eq:22n}, \eqref{eq:22n-ic} such that
  \eqref{eq:24} holds true for $\xi_j$ and $\zeta_j$ in the case where
  $G_j$ depends on $a$.  We also assume that there exists a constant
  $K>0$ such that for all $j\in \{1,\dots, n\}$ and $(t,x)\in
  [0,T]\times \R$, we have
\begin{equation}\label{eq:croissance}
u_j(t,x)\le u_{0,j}(x)+K(1+t),\quad \xi_j(t,x)\le \xi_{0,j}(x)+K(1+t)
\end{equation}
$$
\left({\rm resp. }-v_j(t,x)\le -u_{0,j}(x) +K(1+t),\quad -\zeta_j(t,x)\le -\xi_{0,j}(x)+K(1+t)\right) \, .
$$
If
$$
u_j(0,x)\le v_j(0,x)\quad {\rm and}\quad \xi_j(0,x)\le
\zeta_j(0,x)\quad \textrm{for all }j\in \Z,\; x\in \R \, ,
$$
then
$$
u_j(t,x)\le v_j(t,x)\quad {\rm and}\quad \xi_j(t,x)\le
\zeta_j(t,x)\quad \textrm{for all }j\in \Z,\; (t,x)\in [0,T]\times \R
\, .
$$
\end{pro}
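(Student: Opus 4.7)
The plan is to argue by contradiction. Assume the conclusion fails; thanks to the $n$-periodicity relations and (A5'), it suffices to take the sup over $j\in\{1,\dots,n\}$, so I would work with the joint supremum
\[
M := \sup_{j\in\{1,\dots,n\},\; (t,x)\in[0,T]\times\R} \max\bigl((u_j-v_j)(t,x),\ (\xi_j-\zeta_j)(t,x)\bigr),
\]
and suppose for contradiction that $M>0$. First, I would replace $v_j,\zeta_j$ by $v_j+\eta e^{Kt},\zeta_j+\eta e^{Kt}$ for $\eta>0$ small and $K>0$ large, to be chosen later: these remain super-solutions because the right-hand side of the $\xi$-equation is Lipschitz in $r$ by (A1')(ii), so the extra $\eta K e^{Kt}$ on the left can be absorbed once $K>L_0$, and the $u$-equation handles the perturbation trivially. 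Combined with a spatial penalty $\eta\sqrt{1+|x|^2}$, which is admissible thanks to the linear-in-$t$ growth bound \eqref{eq:croissance} and the Lipschitz initial data (A0'), this forces the modified supremum to be achieved on a compact set in $x$ with a strict $\eta K$-gap.

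Next, I would apply the classical doubling of variables with
\[
\Psi_\alpha(t,x,s,y) = \frac{(t-s)^2+(x-y)^2}{\alpha^2},
\]
and consider
\[
M_\alpha := \sup_{j,(t,x),(s,y)} \max\!\bigl(u_j(t,x)-v_j(s,y),\ \xi_j(t,x)-\zeta_j(s,y)\bigr) - \Psi_\alpha - \eta\sqrt{1+|x|^2}.
\]
Standard estimates give a maximizer $(j^*,t_\alpha,x_\alpha,s_\alpha,y_\alpha)$ on a bounded set and $|t_\alpha-s_\alpha|+|x_\alpha-y_\alpha|\to 0$ as $\alpha\to 0$. I then split into two cases according to which component realizes the maximum. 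In Case 1 (maximum at $u-v$), the $u$-equation contains no spatial derivative, so \eqref{eq:201} for $u_{j^*}$ and $v_{j^*}$ yields, at the limit,
\[
\eta K \le \alpha_0\bigl((\xi_{j^*}-\zeta_{j^*})-(u_{j^*}-v_{j^*})\bigr)\Big|_{(t^*,x^*)} \le 0,
\]
since $\xi_{j^*}-\zeta_{j^*}\le M = u_{j^*}-v_{j^*}$ at the max point, a contradiction. In Case 2 (maximum at $\xi-\zeta$), I apply \eqref{eq:201bis} for $\xi_{j^*}$ and $\zeta_{j^*}$, exploit the Lipschitz bounds (A1')(ii)-(iii) and the monotonicity (A2')-(A3') in $V$ and $a$, and observe via the Crandall-Ishii construction that the spatial test-function gradients at the two maximizing points coincide, so the $L_0|q_\xi-q_\zeta|$ term is absent. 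The remaining Lipschitz terms are bounded by $C\cdot M$, yielding $\eta K\le CM$, a contradiction once $K$ is chosen larger than $C$.

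The principal obstacle is the non-local term $a_j(\tau,y)=\inf_{y'}(\xi_j(\tau,y')-py')+py-\xi_j(\tau,y)$, since at the doubled max point the two infima are evaluated at distinct times $t_\alpha^*,s_\alpha^*$. I would circumvent this by first passing to the limit $\alpha\to 0$ using the upper/lower semi-continuity of the solutions to reduce the comparison to a single point $(t^*,x^*)$, and then exploit the global bound $\xi_j(y')-\zeta_j(y')\le M$, valid for all $y'$, to deduce
\[
\inf_{y'}(\xi_j(t^*,y')-py')\le \inf_{y'}(\zeta_j(t^*,y')-py')+M.
\]
Combined with $\xi_{j^*}(t^*,x^*)-\zeta_{j^*}(t^*,x^*)=M$ in Case 2, this gives $a_\xi\le a_\zeta$ at the max point, so the monotonicity in $a$ from (A3') provides the correct sign in the differential inequality. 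A secondary difficulty is the feedback coupling between the two equations (the $u$-equation couples through $\xi-\zeta$ and vice versa); this is precisely why $M$ must be the joint maximum so that a single Gronwall-type closure works for both components simultaneously.
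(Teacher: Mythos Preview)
Your overall architecture matches the paper's: contradiction on the joint supremum $M=\sup_{j,t,x}\max(u_j-v_j,\xi_j-\zeta_j)$, penalize in time and space, split into the two cases according to which component realizes the maximum, and in Case~2 close via the Lipschitz/monotonicity structure of $G_j$. Case~1 works exactly as you say.

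The genuine gap is your treatment of the nonlocal variable $a$. The proposed fix---``pass to the limit $\alpha\to 0$ \dots to reduce the comparison to a single point $(t^*,x^*)$'' and then invoke $\xi_j(y')-\zeta_j(y')\le M$---is not valid: the viscosity inequalities must be combined \emph{at the doubled maximizer}, not after sending $\alpha\to 0$, and in your time-and-space doubling the sub- and super-solutions sit at different times $t_\alpha\neq s_\alpha$, so the pointwise bound $\xi_j-\zeta_j\le M$ (which is at equal times) is unavailable. If instead you use maximality of $M_\alpha$ at the diagonal $(t_\alpha,y',s_\alpha,y')$, your spatial penalty $\eta\sqrt{1+|y'|^2}$ enters the resulting inequality with $y'$ ranging over all of $\R$, and you have no control on where the infimum of $\zeta_j-py'$ is (nearly) attained. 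There is a second issue you do not mention: the explicit $py$ in the definition of $a$ produces, after subtraction, a residual term of order $p\,|x_\alpha-y_\alpha|\,|q|$, which is of the \emph{same} order as the doubling penalty and does not vanish for free.

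The paper handles both points by doubling only the space variable (single time $\bar t$), penalizing with $\alpha|x|^2$ and weighting the doubling term as $e^{At}\frac{|x-y|^2}{2\eps}$. Maximality at $(\bar t,y',y',\bar j)$ then gives
\[
\xi_{\bar j}(\bar t,y')-\zeta_{\bar j}(\bar t,y')\le \xi_{\bar j}(\bar t,\bar x)-\zeta_{\bar j}(\bar t,\bar y)+\alpha|y'|^2,
\]
so $\inf_{y'}(\xi_{\bar j}-py')\le \inf_{y'}(\zeta_{\bar j}-py'+\alpha|y'|^2)+(\xi_{\bar j}(\bar x)-\zeta_{\bar j}(\bar y))$. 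The error $\inf(\zeta_{\bar j}-py'+\alpha|y'|^2)-\inf(\zeta_{\bar j}-py')$ is $o_\alpha(1)$ by a standard inf-convolution argument (this is where the growth bound \eqref{eq:24} is actually used), and the residual $p(\bar x-\bar y)|\bar p|\le p\,e^{A\bar t}\frac{|\bar x-\bar y|^2}{\eps}$ is absorbed by choosing $A=2p$ in the exponential weight. Finally, your closing line ``$\eta K\le CM$, a contradiction once $K>C$'' is not the right bookkeeping: the $V$- and $r$-contributions cancel (not merely bound by $CM$) once $L'\ge L_0$, and the contradiction is $\eta/T^2\le o_\alpha(1)$ for $\eps,\eta$ fixed.
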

\begin{rem}
  Even if it was not specified in \cite{FIM2}, the Lipschitz
  continuity in $q$ of $G_j$ is necessary to obtain a general
  comparison principle.
\end{rem}
\begin{proof}[Proof of Proposition~\ref{pro:3}]
  In view of assumption (A1')(i) and using the change of unknown
  functions $\bar u_j(t,x)=e^{-\lambda t} u_j(t,x)$ and $\bar
  \xi_j(t,x)=e^{-\lambda t}\xi_j(t,x)$, we classically assume, without
  loss of generality, that for all $r\ge s$
\begin{equation}\label{eq:L'}
G_j(\tau,V,r,a,q)-G_j(\tau,V,s,a,q)\le - L'(r-s)
\end{equation}
for $L'\ge L_0>0$.

We next define 
$$
M=\sup_{(t,x)\in (0,T)\times \R}\max_{j\in \{1,\dots,n\}}
\max\left(u_j(t,x)-v_j(t,x),\xi_j(t,x)-\zeta_j(t,x)\right) \, .
$$

The proof proceeds in several steps.

\noindent{\bf Step 1: The test function}\\
We argue by contradiction by assuming that $M>0$. Classically, we
duplicate the space variable by considering for $\e,\; \a$ and $\eta$
``small'' positive parameters, the functions
\begin{eqnarray*}
\varphi(t,x,y,j)=u_j(t,x)-v_j(t,y)- e^{At}\frac {|x-y|^2}{2\e}-\a|x|^2-\frac \eta {T-t} \\
\phi(t,x,y,j)=\xi_j(t,x)-\zeta_j(t,y)-e^{At}\frac {|x-y|^2}{2\e}-\a|x|^2-\frac \eta {T-t}
\end{eqnarray*}
where $A$ is a positive constant which will be chosen later.
We also consider 
$$
\Psi(t,x,y,j)=\max (\varphi(t,x,y,j), \phi(t,x,y,j))\, .
$$

Using Inequalities~\eqref{eq:croissance} and Assumption~(A0'), we get 
$$
u_j(t,x)-v_j(t,y)\le u_{0,j}(x)-u_{0,j}(y)+2K(1+T)\le K_0|x-y|+2K(1+T)
$$
and
$$
\xi_j(t,x)-\zeta_j(t,y)\le K_0|x-y|+2K(1+T) \, .
$$
We then deduce that
$$
\lim_{|x|,|y|\to \infty}\varphi(t,x,y,j)=\lim_{|x|,|y|\to \infty}\phi(t,x,y,j)=-\infty \, ,
$$ 
Using also the fact that $\varphi$ and $\phi$ are u.s.c,
we deduce that $\Psi$ reaches its maximum at some point $(\bar t,\bar
x, \bar y,\bar j)$.

Let us assume that $\Psi(\bar t, \bar x, \bar y,\bar j)=\phi(\bar t,
\bar x, \bar y,\bar j)$ (the other case being similar and even
simpler). Using the fact that $M>0$, we first remark that for $\a$ and
$\eta$ small enough, we have
$$
\Psi(\bar t, \bar x, \bar y,\bar j)=:M_{\e,\a,\eta}\ge \frac M 2>0 \, .
$$
In particular, 
$$
 \xi_{\bar j} (\bar t, \bar x) - \zeta_{\bar j} (\bar t, \bar y)
> 0 \, .
$$
\bigskip

\noindent{\bf Step 2: Viscosity inequalities for $\bar t>0$}\\
By duplicating the time variable and passing to the limit
\cite{CIL,B}, we classically get that there are real numbers $a,b
,\bar p \in \R$ such that
$$
a-b= \frac \eta {(T-\bar t)^2}+Ae^{A\bar t}\frac {|\bar x-\bar y|^2}{2\e},\quad \bar p= e^{A\bar t}\frac {\bar x -\bar y}\e 
$$
and
\begin{eqnarray*}
a &\le & G_{\bar j}(\bar t,[u(\bar t,\cdot)]_{\bar j,m}(\bar x),\xi_{\bar j}(\bar t,\bar x),
\inf(\xi_{\bar j}(\bar t,y')-p y')+p\bar x-\xi_{\bar j}(\bar t,\bar x),\bar p+2\a\bar x) \\
b &\ge & G_{\bar j}(\bar t,[v(\bar t,\cdot)]_{\bar j,m}(\bar y),\zeta_{\bar j}(\bar t,\bar y),
\inf(\zeta_{\bar j}(\bar t,y')-p y')+p\bar y-\zeta_{\bar j}(\bar t,\bar y),\bar p).
\end{eqnarray*}
Subtracting the two above inequalities, we  get
\begin{align}\label{eq:inequality1}
  \frac \eta {T^2}+Ae^{A\bar t}\frac {|\bar x-\bar y|^2}{2\e}\le&
  G_{\bar j}(\bar t,[u(\bar t,\cdot)]_{\bar j,m}(\bar x),\xi_{\bar
    j}(\bar t,\bar x),
  \inf(\xi_{\bar j}(\bar t,y')-p y')+p\bar x-\xi_{\bar j}(\bar t,\bar x),\bar p+2\a\bar x)\nonumber\\
  &- G_{\bar j}(\bar t,[v(\bar t,\cdot)]_{\bar j,m}(\bar
  y),\zeta_{\bar j}(\bar t,\bar y), \inf(\zeta_{\bar j}(\bar t,y')-p
  y')+p\bar y-\zeta_{\bar j}(\bar t,\bar y),\bar p)=:\Delta G_j \, .
\end{align}
\bigskip

\noindent{\bf Step 3: Estimate on $u_k(\bar t,\bar x)-v_k(\bar t,\bar y)$}\\
If $k\in \{1,\dots,n\}$, by the inequality $\varphi(\bar t, \bar
x,\bar y,k)\le \phi(\bar t,\bar x,\bar y,\bar j)$, we directly get
that
$$
u_k(\bar t,\bar x)-v_k(\bar t,\bar y)\le \xi_{\bar j}(\bar t,\bar
x)-\zeta_{\bar j}(\bar t, \bar y) \, .
$$
If $k\not \in \{1,\dots,n\}$, let us define $l_k\in \Z$ such that
$k-l_kn=\tilde k\in \{1,\dots,n\}$. By periodicity, we then have
\begin{align*}
  u_ k(\bar t,\bar x)-v_k(\bar t,\bar y)=&u_ {\tilde k+l_k n}(\bar t,\bar x)-v_ {\tilde k+l_k n}(\bar t,\bar y)\\
  =&u_ {\tilde k}(\bar t,\bar x+l_k)-v_ {\tilde k}(\bar t,\bar y+l_k)\\
  \le& \xi_{\bar j}(\bar t,\bar x)-\zeta_{\bar j}(\bar t,\bar y) -
  \a(|\bar x|^2-|\bar x+l_k|^2)
\end{align*}
where we have used the inequality $\varphi(\bar t,\bar x+l_k,\bar
y+l_k,\tilde k)\le \phi(\bar t,\bar x,\bar y,\bar j)$ to get the third
line.  Hence, for all $k\in \Z$ (and in particular for $k\in \{\bar j
-m,\dots,\bar j +m\}$), we finally deduce that
\begin{equation}\label{eq:borneu-v}
  u_ k(\bar t,\bar x)-v_k(\bar t,\bar y)\le  \xi_{\bar j}(\bar t,\bar x)-\zeta_{\bar j}(\bar t,\bar y) 
  + \a\left||\bar x|^2-|\bar x+l_k|^2\right|.
\end{equation}
\bigskip

\noindent{\bf Step 4: Estimate of $\Delta G_j$ in \eqref{eq:inequality1}}\\
Using successively \eqref{eq:borneu-v} and (A1')(ii), we obtain
\begin{eqnarray*}
 \Delta G_j
  &\le& G_{\bar j}\bigg(\bar t,\left[v(\bar t,\cdot) +\xi_{\bar j}(\bar t,\bar x)-\zeta_{\bar j}(\bar t,\bar y) 
+ \a\left||\bar x|^2-|\bar x+l_\cdot|^2\right|\right]_{\bar j,m}(\bar y),\xi_{\bar j}(\bar t,\bar x),\\
&&  \inf(\xi_{\bar j}(\bar t,y')-p y')+p\bar x-\xi_{\bar j}(\bar t,\bar x),\bar p+2\a\bar x\bigg)\\
&&  - G_{\bar j}\left(\bar t,[v(\bar t,\cdot)]_{\bar j,m}(\bar y),\zeta_{\bar j}(\bar t,\bar y),
\inf(\zeta_{\bar j}(\bar t,y')-p y')+p\bar y-\zeta_{\bar j}(\bar t,\bar y),\bar p\right)\\
  &\le& L_0 (\xi_{\bar j}(\bar t,\bar x)-\zeta_{\bar j}(\bar t,\bar y)) 
+L_0\a\max_{k\in \left\{\bar{j}-m,...,\bar{j}+m\right\}}\left||\bar x|^2-|\bar x+l_k|^2\right|\\
&&  +G_{\bar j}\left(\bar t,[v(\bar t,\cdot)]_{\bar j,m}(\bar y),\xi_{\bar j}(\bar t,\bar x),
\inf(\xi_{\bar j}(\bar t,y')-p y')+p\bar x-\xi_{\bar j}(\bar t,\bar x),\bar p+2\a\bar x\right)
\\
&&  - G_{\bar j}\left(\bar t,[v(\bar t,\cdot)]_{\bar j,m}(\bar y),\zeta_{\bar j}(\bar t, \bar y),
\inf(\zeta_{\bar j}(\bar t,y')-p y')+p\bar y-\zeta_{\bar j}(\bar t,\bar y),\bar p\right) \, .
\end{eqnarray*}
Now using successively \eqref{eq:L'} and (A1')(iii), we get
\begin{eqnarray}
\label{eq:long}
\Delta G_j  &\le& L_0 (\xi_{\bar j}(\bar t,\bar x)-\zeta_{\bar j}(\bar t,\bar y)) 
+L_0\a\max_{k\in \left\{\bar{j}-m,...,\bar{j}+m\right\}}
\left||\bar x|^2-|\bar x+l_k|^2\right|-L'(\xi_{\bar j}(\bar t,\bar x)-\zeta_{\bar j}(\bar t,\bar y))\\
  &&+G_{\bar j}\left(\bar t,[v(\bar t,\cdot)]_{\bar j,m}(\bar y),\zeta_{\bar j}(\bar t,\bar y),
\inf(\xi_{\bar j}(\bar t,y')-p y')+p\bar x-\xi_{\bar j}(\bar t,\bar x),\bar p+2\a\bar x\right)\nonumber\\
  &&- G_{\bar j}\left(\bar t,[v(\bar t,\cdot)]_{\bar j,m}(\bar y),\zeta_{\bar j}(\bar t, \bar y),
\inf(\zeta_{\bar j}(\bar t,y')-p y')+p\bar y-\zeta_{\bar j}(\bar t,\bar y),\bar p\right)\nonumber\\
\nonumber\\
  &\le&L\a\max_{k\in \left\{\bar{j}-m,...,\bar{j}+m\right\}}(2 |l_k\bar x|+l_k^2) \nonumber\\
  &&+L_1 \bigg( \inf(\xi_{\bar j}(\bar t,y')-p y')+p\bar x-\xi_{\bar j}(\bar t,\bar x)
-\inf(\zeta_{\bar j}(\bar t,y')-p y')-p\bar y+\zeta_{\bar j}(\bar t,\bar y)\bigg)^+|\bar p|\nonumber\\
  &&+G_{\bar j}\left(\bar t,[v(\bar t,\cdot)]_{\bar j,m}(\bar y),\zeta_{\bar j}(\bar t,\bar y),
 \inf(\xi_{\bar j}(\bar t,y')-p y')+p\bar y-\xi_{\bar j}(\bar t,\bar x),\bar p+2\a\bar x\right)\nonumber\\
  &&- G_{\bar j}\left(\bar t,[v(\bar t,\cdot)]_{\bar j,m}(\bar
    y),\zeta_{\bar j}(\bar t,\bar y),\inf(\xi_{\bar j}(\bar t,y')-p
    y')+p\bar y-\xi_{\bar j}(\bar t,\bar x),\bar p\right)\, . \nonumber
\end{eqnarray}
Using the fact that $\a|\bar x|\to 0$ as $\a\to 0$, we deduce that 
\begin{align*}
  &L\a\max_k(2 |l_k\bar x|+l_k^2) \\
  &+G_{\bar j}\left(\bar t,[v(\bar t,\cdot)]_{\bar j,m}(\bar y),\zeta_{\bar j}(\bar t,\bar y),
\inf(\xi_{\bar j}(\bar t,y')-p y')+p\bar y-\xi_{\bar j}(\bar t,\bar x),\bar p+2\a\bar x\right)\\
  &- G_{\bar j}\left(\bar t,[v(\bar t,\cdot)]_{\bar j,m}(\bar y),\zeta_{\bar j}(\bar t,\bar y),
\inf(\xi_{\bar j}(\bar t,y')-p y')+p\bar y-\xi_{\bar j}(\bar t,\bar y),\bar p\right)\\
  =&o_\a(1)
\end{align*}
where we have used \eqref{eq:24} to get a uniform bound $R >0$ for 
$\inf(\xi_{\bar j}(\bar t,y')-p y')+p\bar y-\xi_{\bar j}(\bar t,\bar y)$.
\medskip

\noindent{\bf Step 5: Passing to the limit}\\
Using the fact that $\phi(\bar t, y', y', \bar j)\le \phi(\bar t,\bar
x,\bar y,\bar j)$, we deduce that
$$
\xi_{\bar j}(\bar t,y')-\xi_{\bar j}(\bar t,\bar x)\le \zeta_{\bar j}(\bar t,y')-\zeta_{\bar j}(\bar t,\bar y)+\a|y'|^2.
$$
Combining this with the previous step, we get
\begin{eqnarray}\label{eq:inequality2}
  \frac \eta {T^2}+Ae^{A\bar t}\frac {|\bar x-\bar y|^2}{2\e}
  &\le& L_1\bigg(\inf(\zeta_{\bar j}(\bar t ,y')-p y'
    -\zeta_{\bar j}(\bar t,\bar y)+\a|y'|^2) \\
&&    -\inf(\zeta_{\bar j}(\bar t ,y')-p y' -\zeta_{\bar j}(\bar t,\bar y))\bigg)^+|\bar p| 
+ p(\bar x-\bar y)|\bar p|+ o_\a(1)\nonumber\\
  &\le&L_1\bigg(\inf(\zeta_{\bar j}(\bar t ,y')-p y'
    +\a|y'|^2)-\inf(\zeta_{\bar j}(\bar t ,y')-p y')\bigg)^+|\bar p| \nonumber \\
\nonumber
&&+
  pe^{A\bar t} \frac {|\bar x-\bar y|^2}{\e} + o_\a(1) \, .
\end{eqnarray}
Choosing $A=2p$, we finally get
$$
\frac \eta {T^2}\le o_\a(1)+\left(\inf(\zeta_{\bar j}(\bar t ,y')-p y'
  +\a|y'|^2)-\inf(\zeta_{\bar j}(\bar t ,y')-p y')\right)|\bar p|.
$$

Using the fact that for $\bar p=O(1)$ when $\alpha\to 0$ (in fact the
$O(1)$ depends on $\e$ which is fixed) and using classical arguments
about inf-convolution, we get that
$$
\left(\inf(\zeta_{\bar j}(\bar t ,y')-p y' +\a|y'|^2)-\inf(\zeta_{\bar
    j}(\bar t ,y')-p y')\right)|\bar p|=o_\a(1)
$$ 
and so
$$
\frac \eta {T^2}\le o_\a(1)
$$
which is a contradiction for $\a$ small enough. 
\medskip

\noindent{\bf Step 6: Case $\bar t=0$}\\
We assume that there exists a sequence $\e_n\to 0$ such that $\bar t=0$.
In this case, we have
$$
0<\frac M 2\le M_{\e_n,\a, \eta}\le \xi_0(\bar x)-\xi_0(\bar y)
-\frac {|\bar x-\bar y|^2}{2\e_n}-\a|x|^2\le  \xi_0(\bar x)-\xi_0(\bar y)
\le \|D\xi_0\|_{L^\infty}|\bar x-\bar y| \, .
$$
Using the fact that $|\bar x-\bar y|\to 0$ as $\e_n\to 0$ yields a
contradiction.
\end{proof} 
Let us now give a comparison principle on bounded sets. To this end,
for a given point $(\tau_0,y_0)\in (0,T)\times \R$ and for all
$r,R>0$, let us set
$$Q_{r,R}=(\tau_0-r,\tau_0+r)\times (y_0-R,y_0+R).$$
We then have the following result which proof is similar to the one of Proposition \ref{pro:3}
\begin{pro}[{\bf Comparison principle on bounded sets}]\label{pro:pc}
  ~\newline Assume {\rm (A1')-(A5')} and that $G_j(\tau, V, r, a, q)$
  does not depend on the variable $a$ for each $j$. Assume that
  $((u_j)_j,(\xi_j)_j)$ is a sub-solution (resp. $((v_j)_j,(\zeta)_j)$
  a super-solution) of \eqref{eq:22n} on the open set $Q_{r,R}\subset
  (0,T)\times \R$. Assume also that for all $j\in \{1,\dots, n\}$
$$
u_j\le v_j \quad {\rm and} \quad \xi_j\le \zeta_j\quad {\rm on}\; (\overline Q_{r,R+m}\backslash Q_{r,R}).
$$
Then $u_j\le v_j$ and $\xi_j\le \zeta_j$ on $Q_{r,R}$ for $j\in \{1,\dots, n\}$.  
\end{pro}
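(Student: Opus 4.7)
The plan is to mimic the duplication-of-variables scheme of Proposition~\ref{pro:3}, exploiting two simplifications afforded by the hypotheses: the domain is bounded, and $G_j$ does not depend on the nonlocal slot $a$, which dispenses with the delicate Lipschitz-in-$q$/inf-convolution estimates from Steps~4--5 of that proof. First I would reduce to the case where $G_j$ is strictly decreasing in $r$ via the change of unknowns $\bar u_j=e^{-L_0 t}u_j$, $\bar \xi_j=e^{-L_0 t}\xi_j$, so that an inequality of the form \eqref{eq:L'} holds. I would then argue by contradiction, assuming
$$M:=\sup_{(t,x)\in Q_{r,R},\, j\in\{1,\dots,n\}} \max\bigl(u_j(t,x)-v_j(t,x),\ \xi_j(t,x)-\zeta_j(t,x)\bigr)>0.$$

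Next, on the compact set $[\tau_0-r,\tau_0+r)\times\overline{B(y_0,R)}\times\overline{B(y_0,R)}\times\{1,\dots,n\}$ I would introduce
$$\varphi(t,x,y,j)=u_j(t,x)-v_j(t,y)-e^{At}\frac{|x-y|^2}{2\e}-\frac{\eta}{\tau_0+r-t},$$
together with the analogous $\phi$ for $(\xi_j,\zeta_j)$ and $\Psi=\max(\varphi,\phi)$. The coercive term $\a|x|^2$ of Proposition~\ref{pro:3} is no longer required since the spatial variables already live in a bounded set. Upper semicontinuity ensures that $\Psi$ attains its maximum $M_{\e,\eta}$ at some $(\bar t,\bar x,\bar y,\bar j)$, and evaluating $\Psi$ at $x=y$ at an approximate maximizer of the original difference in $Q_{r,R}$ yields $M_{\e,\eta}\ge M/2$ for $\e,\eta$ small.

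The critical new step, replacing the ``$\bar t=0$'' case of Proposition~\ref{pro:3}, is to show that the maximizer lies in the interior, i.e.\ $\bar t\in(\tau_0-r,\tau_0+r)$ and $\bar x,\bar y\in B(y_0,R)$. The penalty $\eta/(\tau_0+r-t)$ rules out $\bar t=\tau_0+r$; if along a subsequence $\e\to0$ the maximizer instead satisfied $\bar t=\tau_0-r$ or $|\bar y-y_0|=R$, the penalty $|x-y|^2/(2\e)$ would force $\bar x-\bar y\to 0$, so that up to extraction $(\bar t,\bar y)\to(t^\ast,y^\ast)\in\overline Q_{r,R}\setminus Q_{r,R}$ and $\bar x\to y^\ast$. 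Applying the boundary hypothesis at $(t^\ast,y^\ast)$ and using the upper (resp.\ lower) semicontinuity of $u_{\bar j},\xi_{\bar j}$ (resp.\ $v_{\bar j},\zeta_{\bar j}$) would then yield $\limsup M_{\e,\eta}\le 0$, contradicting $M_{\e,\eta}\ge M/2$.

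Once the maximizer is known to be interior, doubling the time variable and the standard parabolic jets argument produce the viscosity inequalities of Step~2 of Proposition~\ref{pro:3}, with the $a$-slot absent. The cross-index estimate of Step~3 goes through by using the sub/super-solution identity $u_{j+n}(t,x)=u_j(t,x+1)$ to reduce each component of $[u]_{\bar j,m}(\bar x)$ to some $u_k$ with $k\in\{1,\dots,n\}$ evaluated at a shifted point, still inside $\overline{B(y_0,R+m)}$; this is exactly where the $m$-enlargement in the hypothesis enters, and the residual shift error now vanishes because no $\a|x|^2$ penalty is present. Steps~4--5 then collapse to a direct application of (A1')(ii) and \eqref{eq:L'}, giving an inequality of the type $\eta/(2r)^2+o_\e(1)\le 0$, which is impossible for $\e,\eta$ small. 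The main obstacle is the interior-location argument together with controlling the nonlocal coupling $[u]_{\bar j,m}(\bar x)$ through periodicity on the enlarged slab $\overline Q_{r,R+m}\setminus Q_{r,R}$; once these points are settled, the remainder of the proof is a strict simplification of Proposition~\ref{pro:3}.
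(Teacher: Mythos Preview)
Your approach is exactly what the paper intends: it merely says the proof ``is similar to the one of Proposition~\ref{pro:3}'', and your sketch correctly identifies the two simplifications (bounded domain, no $a$-dependence) and the one genuinely new ingredient (the interior-location argument replacing the ``$\bar t=0$'' step).

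There is one point where your write-up is imprecise and would not quite close as stated. You maximize $\Psi$ over $\overline{B(y_0,R)}\times\overline{B(y_0,R)}$, establish that the maximizer $(\bar t,\bar x,\bar y)$ is interior to $Q_{r,R}$, and then in Step~3 invoke maximality of $\Psi$ at the shifted point $(\bar t,\bar x+l_k,\bar y+l_k)$. But for $\bar x\in B(y_0,R)$ and $|l_k|\le m$, the shifted point may lie in the annulus $\overline{B(y_0,R+m)}\setminus \overline{B(y_0,R)}$, outside the domain of $\Psi$; the boundary hypothesis there only gives $u_{\tilde k}\le v_{\tilde k}$ at the \emph{same} space point, which does not directly control $u_{\tilde k}(\bar t,\bar x+l_k)-v_{\tilde k}(\bar t,\bar y+l_k)$. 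The clean fix is to maximize $\Psi$ over the larger set $\overline{B(y_0,R+m)}\times\overline{B(y_0,R+m)}$ from the start. Your interior-location argument then applies verbatim on the whole of $\overline{Q_{r,R+m}}\setminus Q_{r,R}$ (where the hypothesis holds), forcing $(\bar t,\bar x),(\bar t,\bar y)\in Q_{r,R}$ for $\e$ small; after that the shifted points automatically lie in $\overline{B(y_0,R+m)}$ and maximality gives the cross-index estimate with zero residual, exactly as you say. With this adjustment your sketch is complete.
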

We now turn to the existence issue. Classically, we need to construct
barriers for \eqref{eq:22n}. In view of (A1')(ii) and (A4'), for $K_0$
given in (A0), the following quantity
\begin{equation}\label{eq:G0}
\overline G=\sup_{\tau\in \R, \; |q|\le K_0, \; j\in \{1,\dots, n\}}|G_j(\tau,0,0, 0, q)|
\end{equation}
is finite. Let us also denote $L_2:=L_1 K_0$. Hence, for all $\tau, a,
b, r\in \R$, $V\in \R^{2m+1}$, $q\in[-K_0, K_0]$ and $j\in \{1, \dots,
n\}$,
\begin{equation}\label{eq:K}
| G_j(\tau, V, r, a,q)-G_j(\tau, V, r, b, q)|\le L_2|a-b| .
\end{equation}
Then we have the following lemma
\begin{lem}[\bf Existence of barriers] \label{lem:1}
   Assume {\rm (A0')-(A5')}. There exists a constant
   $K_1>0$ such that
$$
((u^+_j(\tau, y))_j,(\xi^+_j(\tau,y))_j)= ((u_{0}(y+\frac j
n)+K_1\tau)_j,(\xi_{0}(y+\frac j n)+K_1\tau)_j)
$$
and
$$
((u^-_j(\tau, y))_j,(\xi^-_j(\tau,y))_j)= ((u_{0}(y+\frac j
n)-K_1\tau)_j,(\xi_{0}(y+\frac j n)-K_1\tau)_j)
$$
are respectively super and sub-solution of \eqref{eq:22n},
\eqref{eq:22n-ic} for all $T>0$.  Moreover, we can choose
\begin{equation}\label{eq:C}
  K_1=\max\left(L_2C_0+L_0\left(2+K_0\frac m n+M_0\right)+\overline G, \alpha_0 M_0\right)
\end{equation}
where $C_0$, $(K_0,M_0)$ and $\overline G$ are respectively given in
\eqref{eq:24}, {\rm (A0')} and \eqref{eq:G0}.
\end{lem}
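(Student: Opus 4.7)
The plan is to verify pointwise that $((u_j^+)_j,(\xi_j^+)_j)$ is a classical super-solution of \eqref{eq:22n}, \eqref{eq:22n-ic} (and similarly for the sub-solution). Since $u_0$ and $\xi_0$ are Lipschitz and the time-dependence is affine, at any test point $(\tau_0,y_0)$ where a $C^1$ test function $\phi$ touches $u_j^+$ or $\xi_j^+$ from below, one has $\phi_\tau(\tau_0,y_0)=K_1$, while $\phi_y(\tau_0,y_0)$ belongs to the generalized subdifferential of $\xi_0(\cdot+j/n)$, hence lies in $[1/K_0,K_0]$ by (A0'). The periodicity conditions $u_{j+n}^+(\tau,y)=u_0(y+1+j/n)+K_1\tau=u_j^+(\tau,y+1)$ (and similarly for $\xi^+$) and the initial inequalities $u_j^+(0,y)=u_{0,j}(y)$, $\xi_j^+(0,y)=\xi_{0,j}(y)$ are immediate. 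Finally, \eqref{eq:24} for $\xi_j^+$ is inherited from the same bound at $\tau=0$ assumed on $\xi_0$ in (A0').

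For the first equation in \eqref{eq:22n}, the super-solution inequality reduces to
$$
K_1 \ge \alpha_0\bigl(\xi_0(y+j/n)-u_0(y+j/n)\bigr),
$$
which follows from $\|u_0-\xi_0\|_\infty\le M_0$ and the choice $K_1\ge \alpha_0 M_0$ in \eqref{eq:C}.

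The main work is the second equation. The obstacle is that the arguments $V=[u^+(\tau,\cdot)]_{j,m}(y)$ and $r=\xi_j^+(\tau,y)$ grow linearly in $\tau$, so one cannot directly plug them into the Lipschitz estimate (A1')(ii). To circumvent this, let $k=\lfloor r\rfloor\in\Z$, and use the periodicity (A4')--(A5') to rewrite
$$
G_j(\tau,V,r,a,q)=G_j(\tau,V-k\mathbf{1},r-k,a,q).
$$
Using (A0') ($\|u_0-\xi_0\|_\infty\le M_0$ and $(u_0)_x\le K_0$), the shifted arguments satisfy $|r-k|\le 1$ and
$$
|V_i-k|\le |V_i-r|+|r-k|\le K_0\,m/n+M_0+1,\qquad i\in\{-m,\dots,m\}.
$$
The ``$a$'' argument, computed from $\xi_j^+$, equals $\inf_z(\xi_0(z)-pz)-(\xi_0(y+j/n)-p(y+j/n))$ (the $K_1\tau$ terms cancel), and \eqref{eq:24} bounds its modulus by $C_0$. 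Applying (A1')(ii) with $R=C_0$, then (A1')(iii), then the definition \eqref{eq:G0} of $\overline G$, we get
$$
|G_j(\tau,V,r,a,q)|\le L_0\bigl(K_0\,m/n+M_0+2\bigr)+L_2C_0+\overline G\le K_1,
$$
where the last inequality is exactly the first branch of \eqref{eq:C}. This yields both the super-solution inequality $K_1\ge G_j(\cdots)$ and, with the opposite sign, the sub-solution inequality $-K_1\le G_j(\cdots)$ for $((u_j^-)_j,(\xi_j^-)_j)$. The sub-solution verification of the first equation is symmetric: one needs $-K_1\le \alpha_0(\xi_j^--u_j^-)=\alpha_0(\xi_0-u_0)(y+j/n)$, again controlled by $\alpha_0 M_0\le K_1$.

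The only nontrivial step is therefore the periodicity-based reduction that transforms the unbounded $(V,r)$ into bounded arguments so that (A1')(ii) can be applied; the rest is a routine verification of the viscosity inequalities for affine time-perturbations of the initial data.
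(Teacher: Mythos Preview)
Your proof is correct and follows essentially the same approach as the paper: both use the periodicity (A4') to subtract an integer (you take $\lfloor \xi_j^+\rfloor$, the paper takes $\lfloor u_j^+\rfloor$) so that the shifted arguments $(V-k,r-k)$ become bounded, then apply the Lipschitz bound (A1')(ii), the $a$-variation bound (A1')(iii)/\eqref{eq:K}, and the definition of $\overline G$ to arrive at exactly the constant in \eqref{eq:C}. The verification of the first equation via $\|u_0-\xi_0\|_\infty\le M_0$ and of the periodicity/initial conditions is identical.
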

\begin{proof}
  We prove that $((u^+_j(\tau, y))_j,(\xi^+_j(\tau,y))_j)$ is a
  super-solution of \eqref{eq:22n}, \eqref{eq:22n-ic}. In view of (A0)
  with $\eps =1$, we have for all $j\in \{1,\dots,n\}$
$$
\alpha_0 (\xi^+_j(\tau,y)-u^+_j(\tau,y) )=\alpha_0 (u_{0}(y+\frac j
n)-\xi_{0}(y+\frac j n)) \le\alpha_0 M_0 \le K_1
$$
and
\begin{align*}
  &G_j\bigg(\tau, [u^+(\tau,\cdot)]_{j,m}(y),\xi^+_j(\tau, y),
  \inf_{y'\in\R}\left(\xi^+_j(\tau,y')-py'\right)+py-\xi_j^+(\tau,y),(\xi^+_j)_y(\tau,y)\bigg)\\
  =&G_j\bigg(\tau, [u^+(\tau,\cdot)-\lfloor u^+_j(
  \tau,y)\rfloor]_{j,m}(y),\xi^+_j(\tau, y)
  -\lfloor u^+_j( \tau,y)\rfloor,\\
  &\quad \quad\inf_{y'\in\R}\left(\xi_{0}(y'+\frac j n)-py'\right)+py
  -\xi_{0}(y+\frac j n),(\xi_{0})_y(y+\frac j n)\bigg)\\
  \le& L_2 C_0+L_0+ L_0+ G_j\bigg(\tau, [u^+(\tau,\cdot)-u^+_j(
  \tau,y)]_{j,m}(y),\xi^+_j(\tau,y)
  - u^+_j( \tau,y),0,(\xi_{0})_y(y+\frac j n)\bigg)\\
  \le &
  L_2 C_0+L_0+L_0+ L_0 K_0 \frac m n +L_0M_0+ G_j\bigg(\tau, 0,\dots, 0, 0,0, (\xi_{0})_y(y+\frac j n)\bigg)\\
  \le &L_2 C_0+2L_0+ L_0 K_0 \frac m n+L_0M_0+\overline G
\end{align*}
where we have used the periodicity assumption~(A4') for the second
line, assumptions~(A0') and (A1')(ii) for the third line, the fact that
$|u_0(y+\frac{j+k}n)-u_0(y+\frac j n)|\le K_0 \frac mn$ for $|k|\le m$ and assumption (A0')
for the forth line and $|(\xi^+_j)_y|\le K_0$ for the last line.

When $G_j(\tau,V,r,a,q)$ is independent on $a$, we can simply choose
$L_2=0$.  This ends the proof of the Lemma.
\end{proof}
By applying Perron's method together with the comparison principle, we
immediately get from the existence of barriers the following result
\begin{theo}[\bf Existence and uniqueness for \eqref{eq:22n}]
  Assume {\rm (A0')-(A5')}.  Then there exists a unique solution
  $((u_j)_j, (\xi_j)_j)$ of \eqref{eq:22n}, \eqref{eq:22n-ic}.
  Moreover the functions $u_j, \xi_j$ are continuous for all $j$.
\end{theo}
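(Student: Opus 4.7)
The plan is to apply Perron's method to the coupled system, leveraging the comparison principle (Proposition~\ref{pro:3}) and the barriers constructed in Lemma~\ref{lem:1}. Let $((u_j^\pm)_j,(\xi_j^\pm)_j)$ denote the super- and sub-solution barriers from Lemma~\ref{lem:1}, and define the class
\begin{equation*}
\mathcal{S} = \Bigl\{((w_j)_j,(\eta_j)_j) \text{ sub-solution of \eqref{eq:22n},\eqref{eq:22n-ic} with } u_j^-\le w_j\le u_j^+,\ \xi_j^-\le \eta_j\le \xi_j^+\text{ for all }j\Bigr\}.
\end{equation*}
This class is non-empty since it contains $((u_j^-)_j,(\xi_j^-)_j)$. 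Set, componentwise,
\begin{equation*}
\hat u_j(\tau,y)=\sup\{w_j(\tau,y) : ((w_j)_j,(\eta_j)_j)\in\mathcal{S}\}, \qquad \hat\xi_j(\tau,y)=\sup\{\eta_j(\tau,y) : ((w_j)_j,(\eta_j)_j)\in\mathcal{S}\}.
\end{equation*}
These suprema are bounded by the barriers, and the $n$-periodicity relations $\hat u_{j+n}(\tau,y)=\hat u_j(\tau,y+1)$ and $\hat\xi_{j+n}(\tau,y)=\hat\xi_j(\tau,y+1)$ are inherited from the members of $\mathcal{S}$. Moreover, every admissible $\eta_j$ satisfies the uniform estimate \eqref{eq:24} with a common constant $C_0$ coming from the barriers, so that the non-local term $\inf_{y'}(\eta_j(\tau,y')-py')$ is under uniform control along the supremum.

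Next, one shows by standard stability of viscosity sub-solutions that the upper semi-continuous envelope $((\hat u_j^*)_j,(\hat\xi_j^*)_j)$ is still a sub-solution of \eqref{eq:22n},\eqref{eq:22n-ic}; the monotonicity in $a$ provided by (A3') is what ensures that the non-local term passes correctly to the limit along any maximizing sequence. Then by the classical contradiction-and-bumping argument, the lower semi-continuous envelope $((\hat u_{j,*})_j,(\hat\xi_{j,*})_j)$ is a super-solution: if this failed at some interior point, one could perturb the candidate upwards in a small neighbourhood while preserving the sub-solution property and the constraints, producing an element of $\mathcal{S}$ strictly larger than $\hat u_j$ or $\hat \xi_j$ at that point, contradicting the definition of the supremum. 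The initial condition is inherited directly from the barriers, which coincide with the initial data at $\tau=0$.

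Applying Proposition~\ref{pro:3} to the pair of sub- and super-solutions thus obtained, one deduces $\hat u_j^*\le \hat u_{j,*}$ and $\hat\xi_j^*\le \hat\xi_{j,*}$; combining with the reverse (trivial) inequalities $\hat u_{j,*}\le \hat u_j^*$ and $\hat\xi_{j,*}\le \hat\xi_j^*$ forces equality throughout, so that $\hat u_j$ and $\hat\xi_j$ are continuous and define a viscosity solution of the Cauchy problem. Uniqueness is then a direct consequence of another application of Proposition~\ref{pro:3}.

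The main obstacle is the bumping step, which in a coupled system must be performed on the pair $(u_j,\xi_j)$ simultaneously rather than on a single component: a perturbation of $\hat\xi_j$ alone would break the first equation $(u_j)_\tau=\alpha_0(\xi_j-u_j)$, and vice versa. In addition, one must verify that the perturbation respects both the $n$-periodicity in $j$ and the non-local estimate \eqref{eq:24}; this is straightforward using the monotonicity assumptions (A2')--(A3') together with the fact that the barriers themselves enjoy these properties with margin.
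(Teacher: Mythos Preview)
Your proposal is correct and follows exactly the approach the paper takes: the paper's own proof is a single sentence stating that existence follows from Perron's method combined with the barriers of Lemma~\ref{lem:1} and the comparison principle of Proposition~\ref{pro:3}, with uniqueness and continuity coming from the latter. Your write-up simply expands on these standard details (including the care needed for the coupled bump and for the non-local term via \eqref{eq:24}), in line with the references \cite{I,IK91} the paper cites for Perron's method for monotone systems.
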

We now claim that particles are ordered. 
\begin{pro}[\bf Ordering of the particles]\label{pro:croissancej} 
  Assume {\rm (A0')} and that the $(G_j)_j$'s satisfy {\rm
    (A1')-(A6')}. Let $(u_j,\xi_j)$ be a solution of
  \eqref{eq:22n}-\eqref{eq:22n-ic} such that \eqref{eq:24} holds true
  for $\xi_j$ if $G_j$ depends on $a$. Assume also that the $u_j$'s are
  Lipschitz continuous in space and let $L_u$ denote a common
  Lipschitz constant. Then $u_{j}$ and $\xi_{j}$ are non-decreasing
  with respect to $j$.
\end{pro}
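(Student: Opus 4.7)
The plan is to adapt the doubling-of-variables strategy of the proof of Proposition~\ref{pro:3}, this time comparing $(u_j,\xi_j)$ to the index-shifted pair $(u_{j+1},\xi_{j+1})$. The genuinely new ingredient is Assumption~(A6'), which lets one bound $G_j$ evaluated on the ``lower'' window of an ordered $(2m+2)$-tuple by $G_{j+1}$ on the ``upper'' window.

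First I check the initial ordering: from (A0'), $(u_0)_x,(\xi_0^\e)_x\ge 1/K_0>0$, so $u_0$ and $\xi_0$ are non-decreasing, and therefore $u_j(0,y)=u_0(y+j/n)\le u_0(y+(j+1)/n)=u_{j+1}(0,y)$, and likewise for $\xi_j$. Arguing by contradiction, I suppose $M:=\sup_{\tau,y,j}\max(u_j-u_{j+1},\xi_j-\xi_{j+1})(\tau,y)>0$; by the relation $u_{j+n}(\tau,y)=u_j(\tau,y+1)$ only finitely many indices $j$ need to be considered modulo translation. Following Proposition~\ref{pro:3}, I introduce $\varphi(\tau,x,y,j)=u_j(\tau,x)-u_{j+1}(\tau,y)-e^{A\tau}|x-y|^2/(2\varepsilon)-\alpha|x|^2-\eta/(T-\tau)$, the analogous $\phi$ built from $\xi_j$ and $\xi_{j+1}$, and $\Psi=\max(\varphi,\phi)$. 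The common spatial Lipschitz constant $L_u$ ensures that $\Psi$ attains its maximum $M_{\varepsilon,\alpha,\eta}\ge M/2$ at some $(\bar\tau,\bar x,\bar y,\bar j)$.

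If $\bar\tau>0$ and $\Psi=\varphi$ at that point, the sub-solution inequality for $u_{\bar j}$ and the super-solution inequality for $u_{\bar j+1}$ combine with the maximality $\varphi\ge\phi$ to give $\eta/T^2\le \alpha_0\bigl((\xi_{\bar j}-\xi_{\bar j+1})-(u_{\bar j}-u_{\bar j+1})\bigr)(\bar\tau,\bar x,\bar y)\le 0$, a contradiction. The substantive case is $\Psi=\phi$. Here the sub/super-solution inequalities for $\xi_{\bar j}$ at $(\bar\tau,\bar x)$ and $\xi_{\bar j+1}$ at $(\bar\tau,\bar y)$ yield, after subtraction, $\eta/T^2+Ae^{A\bar\tau}|\bar x-\bar y|^2/(2\varepsilon)\le G_{\bar j}\bigl(\bar\tau,[u(\bar\tau,\cdot)]_{\bar j,m}(\bar x),\dots\bigr)-G_{\bar j+1}\bigl(\bar\tau,[u(\bar\tau,\cdot)]_{\bar j+1,m}(\bar y),\dots\bigr)$. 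To bound the right-hand side by $o_\alpha(1)$, I exploit the optimality of $\bar j$ for $\Psi$: for every integer $k$, $\Psi(\bar\tau,\bar x,\bar y,\bar j)\ge \varphi(\bar\tau,\bar x,\bar y,k)$ gives the almost-ordering $u_k(\bar\tau,\bar x)-u_{k+1}(\bar\tau,\bar y)\le \xi_{\bar j}(\bar\tau,\bar x)-\xi_{\bar j+1}(\bar\tau,\bar y)+o_\alpha(1)$. Combined with $|u_k(\bar\tau,\bar x)-u_k(\bar\tau,\bar y)|\le L_u|\bar x-\bar y|$ and the classical estimate $|\bar x-\bar y|=o_\varepsilon(1)$, this lets me interleave the values $u_{\bar j+i}(\bar\tau,\bar x)$ and $u_{\bar j+i}(\bar\tau,\bar y)$ into a non-decreasing $(2m+2)$-tuple, up to errors vanishing with $\alpha$ and $\varepsilon$. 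Applying (A6') to this tuple, together with the Lipschitz estimates (A1') and the monotonicity (A2')--(A3'), and handling the infimum argument $a$ via the inf-convolution trick of Steps~4--5 of Proposition~\ref{pro:3}, produces the required upper bound; choosing $A$ large as in that proof then yields a contradiction for small $\varepsilon,\alpha,\eta$. The case $\bar\tau=0$ is treated as in Step~6 of Proposition~\ref{pro:3}, using the Lipschitz regularity of $u_0$ and $\xi_0$.

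The principal obstacle is the use of (A6'): unlike (A2')--(A3'), which compare $G_j$ with itself on componentwise ordered inputs, (A6') requires a single \emph{non-decreasing} tuple of $2m+2$ values from which both the window for $G_j$ and the shifted window for $G_{j+1}$ are extracted. The spatial Lipschitz regularity of the $u_k$'s and the optimality of $\bar j$ in $\Psi$ are precisely what allow us to interleave the values $u_{\bar j+i}$ at $\bar x$ and at $\bar y$ into such a tuple, the mismatch being absorbed by the penalization parameters as $\varepsilon,\alpha,\eta\to 0$.
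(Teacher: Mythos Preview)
Your proposal is correct and follows essentially the same route as the paper: define $(v_j,\zeta_j)=(u_{j+1},\xi_{j+1})$, rerun the doubling-of-variables argument of Proposition~\ref{pro:3}, use the spatial Lipschitz bound on $u$ to pass from $\bar x$ to $\bar y$, and exploit the optimality of $\bar j$ in $\Psi$ to obtain the almost-ordering needed for (A6'). The one place where the paper is more explicit than your sketch is the construction of the non-decreasing $(2m+2)$-tuple: rather than ``interleaving'' values at $\bar x$ and $\bar y$, the paper first moves everything to $\bar y$ (paying $L_0L_u|\bar x-\bar y|$) and then sets $V_i=u_{\bar j+i}(\bar\tau,\bar y)+(m+i)\delta$ with $\delta=\xi_{\bar j}(\bar\tau,\bar x)-\xi_{\bar j+1}(\bar\tau,\bar y)+L_u|\bar x-\bar y|+o_\alpha(1)\ge 0$; the inequality $u_k(\bar\tau,\bar y)-u_{k+1}(\bar\tau,\bar y)\le\delta$ makes this sequence non-decreasing, and the additive shifts are absorbed into an $L_0(2m+1)\delta$ error that is handled by choosing $L'$ large in the analogue of \eqref{eq:L'}.
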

\begin{proof}[Proof of Proposition \ref{pro:croissancej}]
  The idea of the proof is to define
  $(v_j,\zeta_j)=(u_{j+1},\xi_{j+1})$. In particular, we have
$$(v_j(0,y),\zeta_j(0,y))\ge (u_j(0,y),\xi_j(0,y)).$$
Moreover, $((v_j)_j,(\zeta_j)_j)$ is a solution of
$$
\left\{\begin{array}{l} 
\left\{\begin{array}{l} 
(v_j)_\tau=
    \alpha_0 (\zeta_j-v_j) ,\\
    (\zeta_j)_\tau=G_{j+1}(\tau, [v(\tau,\cdot)]_{j,m},\zeta_j,
    \inf_{y'\in\R}\left(\zeta_j(\tau,y')-py'\right)+py-\zeta_j(\tau,y),(\zeta_j)_y),\\
\end{array}\right. \\ \\
\left\{\begin{array}{l} 
    v_{j+n}(\tau,y)= v_{j}(\tau, y+1),\\
    \zeta_{j+n}(\tau,y)=\zeta_j(\tau,y+1)\\
    \end{array}\right. \\ \\
\left\{\begin{array}{l} 
    v_j(0,y)=u_{0}(y+\frac j n),\\
    \zeta_j(0,y)=\xi_{0}(y+\frac j n) \, .
\end{array}\right.
\end{array}\right.
$$
Now the goal is to obtain $u_j\le v_j$ and $\xi_j\le \zeta_j$. 
The arguments are essentially the same as those used in the proof of the comparison
principle. The main difference is that \eqref{eq:inequality1} is replaced with
\begin{align*}
  \frac \eta {T^2}+Ae^{A\bar t}\frac {|\bar x-\bar y|^2}{2\e}\le&
  G_{\bar j}(\bar t,[u(\bar t,\cdot)]_{\bar j,m}(\bar x),\xi_{\bar
    j}(\bar t,\bar x),
  \inf(\xi_{\bar j}(\bar t,y')-p y')+p\bar x-\xi_{\bar j}(\bar t,\bar x),\bar p+2\a\bar x)\nonumber\\
  &- G_{\bar j+1}(\bar t,[v(\bar t,\cdot)]_{\bar j,m}(\bar
  y),\zeta_{\bar j}(\bar t,\bar y),
  \inf(\zeta_{\bar j}(\bar t,y')-p y')+p\bar y-\zeta_{\bar j}(\bar t,\bar y),\bar p)\\
  \\
  \le& G_{\bar j}(\bar t,[u(\bar t,\cdot)]_{\bar j,m}(\bar
  y),\xi_{\bar j}(\bar t,\bar x),
  \inf(\xi_{\bar j}(\bar t,y')-p y')+p\bar x-\xi_{\bar j}(\bar t,\bar x),\bar p+2\a\bar x)\nonumber\\
  &- G_{\bar j+1}(\bar t,[v(\bar t,\cdot)]_{\bar j,m}(\bar
  y),\zeta_{\bar j}(\bar t,\bar y), \inf(\zeta_{\bar j}(\bar t,y')-p
  y')+p\bar y-\zeta_{\bar j}(\bar t,\bar y),\bar p) +L_0 L_u |\bar
  x-\bar y| \\
& =: \overline \Delta G_j 
\end{align*}
where we have used the Lipschitz continuity of $u$ and Assumption~(A1').

To obtain the desired contradiction, we have to estimate the right
hand side of this inequality. First, using Step~3 of the proof of the
comparison principle (with the same notation), we can define
$$
\delta:= \xi_{\bar j} (\bar t, \bar x)-\zeta_{\bar j}(\bar t,\bar y) +
L_u|\bar x-\bar y| + \a \max_{k\in\{\bar j-m,\dots, \bar j+m\}}(2|l_k
\bar x|+l_k^2)\ge 0
$$ 
such that for $k\in\{\bar j-m,\dots, \bar j+m\}$, we get from
\eqref{eq:borneu-v} the following estimate
\begin{equation}\label{eq:delta}
u_k(\bar t,\bar y)-v_k(\bar t,\bar y)\le \delta.
\end{equation}
Using Monotonicity Assumptions~(A2')-(A3') together with~(A1'), we get
\begin{eqnarray*}
\overline \Delta G_j 
& \le & G_{\bar j}(\bar t,[u (\bar t,\bar y)+ (\cdot-\bar{j})\delta]_{\bar{j},m},\xi_{\bar j}(\bar t,\bar x),
\inf(\xi_{\bar j}(\bar t,y')-p y')+p\bar x-\xi_{\bar j}(\bar t,\bar x),\bar p+2\a\bar x)\\
&&- G_{\bar j+1}(\bar t,[v(\bar t,\bar y)+ (\cdot+1)\delta ]_{\bar j,m},\zeta_{\bar j}(\bar t,\bar y),
\inf(\zeta_{\bar j}(\bar t,y')-p y')+p\bar y-\zeta_{\bar j}(\bar t,\bar y),\bar p) \\
&&+ L_0(2m+1)\delta +L_0 L_u |\bar x-\bar y|\, .
\end{eqnarray*}
Now we are going to use assumption (A6'). Remark first that we have for all $k\in
\{-m,m-1\}$
$$
v_{\bar j+k}(\bar t,\bar y)+ (m+k+1)\delta = u_{\bar j+k+1}(\bar t,\bar y)+ (m+k+1)\delta
$$
and for $k \in \{-m,\dots, m\}$, \eqref{eq:delta} yields
$$
u_{\bar j+k+1}(\bar t,\bar y)+ (m+k+1)\delta \ge u_{\bar j+k}(\bar t,\bar y) + (m+k)\delta \, .
$$
 Thus (A6') implies that 
\begin{multline} \label{eq:1000}
 G_{\bar j}(\bar t,[u(\bar t,\cdot)]_{\bar j,m}(\bar y),\xi_{\bar j}(\bar t,\bar x),
\inf(\xi_{\bar j}(\bar t,y')-p y')+p\bar x-\xi_{\bar j}(\bar t,\bar x),\bar p+2\a\bar x)
\\
\le  G_{\bar j+1}(\bar t,[v(\bar t,\bar y)+ (\cdot+1)\delta ]_{\bar j,m},\xi_{\bar j}(\bar t,\bar x),
\inf(\xi_{\bar j}(\bar t,y')-p y')+p\bar x-\xi_{\bar j}(\bar t,\bar x),\bar p+2\a\bar x) \, .
\end{multline}
Hence 
\begin{multline*}
\overline \Delta G_j  \le  G_{\bar j+1}(\bar t,[v(\bar t,\bar y)+ (\cdot+1)\delta ]_{\bar j,m},\xi_{\bar j}(\bar t,\bar x),
\inf(\xi_{\bar j}(\bar t,y')-p y')+p\bar x-\xi_{\bar j}(\bar t,\bar x),\bar p+2\a\bar x) \\
 - G_{\bar j+1}(\bar t,[v(\bar t,\bar y)+ (\cdot+1)\delta ]_{\bar j,m},\zeta_{\bar j}(\bar t,\bar y),
\inf(\zeta_{\bar j}(\bar t,y')-p y')+p\bar y-\zeta_{\bar j}(\bar t,\bar y),\bar p)\\
+ L_0(2m+1)(\xi_{\bar j} (\bar t, \bar x)-\zeta_{\bar j}(\bar t,\bar y)) +2 (m+1)L_0 L_u |\bar x-\bar y| + 
L_0 (2m+1) \alpha \max_{k\in\{\bar j-m,\dots, \bar j+m\}}(2|l_k
\bar x|+l_k^2)\, .
\end{multline*}
Now, to obtain the desired contradiction, it suffices to follow the
computation from  \eqref{eq:long}; in
particular, choose $L'\ge (2m+1) L_0$ in \eqref{eq:L'}. Then we obtain
$$\frac \eta {T^2}\le o_\a(1)+2(m+1)L_0 L_u|\bar x-\bar y|$$
which is absurd for $\a$ and $\e$ small enough (since $|\bar x-\bar y|\to 0$ as $\e\to 0$)
\end{proof}

\section{Convergence}\label{s3}
\setcounter{equation}{0}

This section is devoted to the proof of the main homogenization result
(Theorem~\ref{th:3n}). The proof relies on the existence of hull
functions (Theorem~\ref{th:2}) and qualitative properties of the
effective Hamiltonian (Theorem~\ref{th:4}). As a matter of fact, we
will use the existence of Lipschitz continuous sub- and super-hull
functions (see Proposition~\ref{pro:139}).  All these results are
proved in the next sections.  \medskip

We start with some preliminary results. Through a change of variables,
the following result is a straightforward corollary of Lemma
\ref{lem:1} and the comparison principle.
\begin{lem}[\bf Barriers uniform in $\eps$] \label{lem:2}
  Assume {\rm (A0)-(A5)}. Then there is a constant
  $C>0$, such that for all $\eps >0$, the solution $((u^\eps_j)_j,
  (\xi^\e_j))$ of \eqref{eq:6n}, \eqref{eq:6n-ic} satisfies for all $t>0$ and $x \in \R$
$$
|u_j^\eps(t,x)-u_{0}(x+\frac {j\e} n)|\le Ct \quad {\rm and }
\quad |\xi_j^\eps(t,x)-\xi_{0}^\e(x+\frac {j\e} n)|\le Ct.
$$
\end{lem}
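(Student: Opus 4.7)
The plan is to reduce the rescaled problem \eqref{eq:6n}-\eqref{eq:6n-ic} to the unrescaled problem \eqref{eq:22n}-\eqref{eq:22n-ic} via the substitution $u_j(\tau,y)=\eps^{-1}u^\eps_j(\eps\tau,\eps y)$, then apply Lemma~\ref{lem:1} together with the comparison principle (Proposition~\ref{pro:3}), and finally unscale. The crux is that the constant $K_1$ produced by Lemma~\ref{lem:1} depends only on $K_0,M_0,\alpha_0,L_0,\overline{G},m,n$, none of which degenerate under the rescaling.

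First I would select the appropriate Hamiltonian and initial data for \eqref{eq:22n}. Take $G_j(\tau,V,r,a,q)=2F_j(\tau,V)+\alpha_0(V_0-r)$, which is independent of $a$, and define the unscaled initial data by
$$
\tilde u_0^\eps(y)=\eps^{-1}u_0(\eps y),\qquad \tilde\xi_0^\eps(y)=\eps^{-1}\xi_0^\eps(\eps y).
$$
A direct computation shows that $(\tilde u_0^\eps,\tilde\xi_0^\eps)$ satisfies (A0') with the \emph{same} constants $K_0$ and $M_0$ as $(u_0,\xi_0^\eps)$ satisfies (A0); in particular the bound $\|\tilde u_0^\eps-\tilde\xi_0^\eps\|_\infty\le M_0$ holds uniformly in $\eps$ because the factor $\eps^{-1}$ in the rescaling cancels the factor $\eps$ on the right-hand side of (A0). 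The assumptions (A1)-(A5) on $F_j$ translate directly into (A1')-(A5') for this $G_j$.

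Second, let $(u_j,\xi_j)$ denote the unique solution of \eqref{eq:22n}-\eqref{eq:22n-ic} with initial data $(\tilde u_0^\eps,\tilde\xi_0^\eps)$. By the explicit form of \eqref{eq:6n} and \eqref{eq:22n}, the scaling identity
$$
u_j^\eps(t,x)=\eps\,u_j(t/\eps,x/\eps),\qquad \xi_j^\eps(t,x)=\eps\,\xi_j(t/\eps,x/\eps)
$$
holds, both sides being viscosity solutions of the same problem with the same initial datum. Lemma~\ref{lem:1} then produces a constant $K_1$, given by \eqref{eq:C} with $L_2=0$ (since $G_j$ does not depend on $a$), such that
$$
\tilde u_0^\eps(y+j/n)\pm K_1\tau,\qquad \tilde\xi_0^\eps(y+j/n)\pm K_1\tau
$$
are super/sub-solutions of \eqref{eq:22n}-\eqref{eq:22n-ic}. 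Since $K_1$ depends only on $L_0,\alpha_0,K_0,M_0,\overline{G},m,n$, it is independent of $\eps$.

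Third, Proposition~\ref{pro:3} yields
$$
|u_j(\tau,y)-\tilde u_0^\eps(y+j/n)|\le K_1\tau,\qquad |\xi_j(\tau,y)-\tilde\xi_0^\eps(y+j/n)|\le K_1\tau.
$$
Substituting $\tau=t/\eps$, $y=x/\eps$ and multiplying by $\eps$, and using the identity $\eps\,\tilde u_0^\eps(x/\eps+j/n)=u_0(x+j\eps/n)$ and similarly for $\xi_0^\eps$, gives exactly the desired estimates with $C=K_1$. The only point requiring care, and that I would verify explicitly, is that the constants defining $K_1$ through \eqref{eq:C} and \eqref{eq:G0} can all be bounded in terms of the data of the problem uniformly in $\eps$; this is immediate because $G_j$ has no $a$-dependence and (A0)-(A5) do not involve $\eps$.
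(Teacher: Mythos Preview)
Your proposal is correct and follows exactly the approach indicated in the paper, which simply states that the lemma is a straightforward corollary of Lemma~\ref{lem:1} and the comparison principle via a change of variables. You have supplied the details the paper omits, and in particular your verification that the constants entering $K_1$ in \eqref{eq:C} are unaffected by the rescaling (since $(\tilde u_0^\eps)_y=(u_0)_x(\eps\,\cdot)$ and $\|\tilde u_0^\eps-\tilde\xi_0^\eps\|_\infty\le M_0$) is precisely the point that makes $C$ uniform in~$\eps$.
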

We also have the following preliminary lemma. 
\begin{lem}[\bf $\eps$-bounds on the gradient] \label{lem:3} Assume
  {\rm (A0)-(A5)}. Then the solution
  $((u^\eps)_j,(\xi^\e_j)_j)$ of \eqref{eq:6n}, \eqref{eq:6n-ic} satisfies for all $t
  >0$, $x \in \R$, $z>0$ and $j\in \Z$
\begin{equation}\label{eq:28}
\eps \left\lfloor \frac{z}{\eps K_0}\right\rfloor \le
u^\eps_j(t,x+z)-u^\eps_j(t,x)\le \eps \left\lceil
\frac{zK_0}{\eps}\right\rceil 
\end{equation}
and 
$$\eps \left\lfloor \frac{z}{\eps K_0}\right\rfloor \le
\xi^\eps_j(t,x+z)-\xi^\eps_j(t,x)\le \eps \left\lceil
\frac{zK_0}{\eps}\right\rceil  \, .
$$
\end{lem}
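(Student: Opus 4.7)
The strategy is to apply the comparison principle to the solution $(u^\e_j, \xi^\e_j)$ together with a suitably shifted version of itself. First observe that the rescaled system \eqref{eq:6n} enjoys two invariances: (i) translation in the space variable $x$, since $F_j$ has no $x$-dependence, and (ii) simultaneous addition of any integer multiple of $\e$ to $u^\e_j$ and $\xi^\e_j$ for all $j\in\Z$, which is just the periodicity assumption (A4) written in the rescaled variables, i.e. $F_j(t/\e,[u^\e/\e+k]_{j,m})=F_j(t/\e,[u^\e/\e]_{j,m})$ for every $k\in\Z$. Both operations preserve the coupling $u^\e_{j+n}(t,x)=u^\e_j(t,x+\e)$, so any composition of these operations applied to a solution yields another solution of \eqref{eq:6n}.

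\textbf{Upper bound.} Set $N^+:=\lceil zK_0/\e\rceil\in\N$ and define
$\tilde u^\e_j(t,x):=u^\e_j(t,x)+\e N^+$, $\tilde \xi^\e_j(t,x):=\xi^\e_j(t,x)+\e N^+$.
By the invariance (ii), $(\tilde u^\e,\tilde\xi^\e)$ is a solution of \eqref{eq:6n}, and by (i) so is $(u^\e_j(\cdot,\cdot+z),\xi^\e_j(\cdot,\cdot+z))$. Using $(u_0)_x\le K_0$ and $(\xi_0^\e)_x\le K_0$ from (A0), at $t=0$,
$$
\tilde u^\e_j(0,x)=u_0\bigl(x+\tfrac{j\e}{n}\bigr)+\e N^+\ge u_0\bigl(x+\tfrac{j\e}{n}\bigr)+K_0 z\ge u_0\bigl(x+\tfrac{j\e}{n}+z\bigr)=u^\e_j(0,x+z),
$$
and similarly $\tilde\xi^\e_j(0,x)\ge \xi^\e_j(0,x+z)$. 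Both pairs satisfy the linear growth bound of Lemma \ref{lem:2}, so Proposition \ref{pro:3} applies and propagates these inequalities to all $t>0$, yielding the upper half of \eqref{eq:28}.

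\textbf{Lower bound.} Set $N^-:=\lfloor z/(\e K_0)\rfloor\in\N$ and define the shifts by adding $\e N^-$ to $u^\e_j$ and $\xi^\e_j$. Using $(u_0)_x\ge 1/K_0$ and $(\xi_0^\e)_x\ge 1/K_0$, at $t=0$,
$$
u^\e_j(0,x)+\e N^-\le u_0\bigl(x+\tfrac{j\e}{n}\bigr)+\tfrac{z}{K_0}\le u_0\bigl(x+\tfrac{j\e}{n}+z\bigr)=u^\e_j(0,x+z),
$$
and similarly for $\xi^\e_j$, and the comparison principle again concludes. There is no real obstacle here, only bookkeeping: the one subtle point, which explains the appearance of $\lfloor\cdot\rfloor$ and $\lceil\cdot\rceil$ in the statement, is that the vertical-shift invariance is available only for shifts belonging to $\e\Z$, so one cannot simply add $K_0 z$ or $z/K_0$ to the solution.
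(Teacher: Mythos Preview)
Your proof is correct and follows essentially the same approach as the paper: exploit the invariance of \eqref{eq:6n} under space translations and under addition of $\eps\Z$ (from (A4)), compare initial data using the gradient bounds in (A0), and invoke the comparison principle (Proposition~\ref{pro:3}). The paper writes out only the lower bound and says the upper is similar, while you treat both; your explicit mention of Lemma~\ref{lem:2} to justify the growth hypothesis \eqref{eq:croissance} in Proposition~\ref{pro:3} is a detail the paper leaves implicit.
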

\begin{rem}
  In particular we obtain that functions $u_j^\e(t,x)$ and
  $\xi_j^\e(t,x)$ are non-decreasing in $x$.
\end{rem}
\begin{proof}[Proof of Lemma \ref{lem:3}.]
We prove the bound from below (the proof is similar for the bound from
above). We first remark that (A0) implies that the initial condition satisfies for all $j\in \Z$
\begin{equation}\label{eq:u0}
u^\eps_j(0,x+z)=u_0(x+z+\frac {j\eps}n)\ge u_0(x+\frac {j\eps }n)+ z/K_0 \ge u^\eps_j(0,x)+ k \eps \quad
  \mbox{with}\quad k=\left\lfloor
  \frac{z}{\eps K_0}\right\rfloor
\end{equation}
and 
$$
\xi^\eps_j(0,x+z)\ge \xi^\eps_j(0,x)+ k \eps \, .
$$

From (A4), we know that for $\eps=1$, the equation is invariant by
addition of integers to solutions. After rescaling it,
Equation~\eqref{eq:6n} is invariant by addition of constants of the
form $k \eps$, $k \in \Z$. For this reason the solution of
\eqref{eq:6n} associated with initial data $((u^\eps_j(0,x) +k\eps
)_j, (\xi^\e_j(0,x)+k\e)_j)$ is $((u^\eps_j + k
\eps)_j,(\xi^\e_j+k\e)_j)$. Similarly the equation is invariant by
space translations. Therefore the solution with initial data
$((u^\eps_j(0,x+z))_j,(\xi^\e_j(0,x+z)_j)$ is $((u^\eps_j
(t,x+z))_j,(\xi^\e_j(t,x+z))_j)$. Finally, from \eqref{eq:u0} and the
comparison principle (Proposition \ref{pro:3}), we get
$$
u^\eps_j (t,x+z)\ge u^\eps_j (t,x)+ k \eps\quad {\rm and}\quad \xi^\eps_j (t,x+z)\ge \xi^\eps_j (t,x)+ k \eps
$$
which proves the bound from below. This ends the proof of the lemma.
\end{proof}

We now turn to the proof of  Theorem~\ref{th:3n}.\medskip

\begin{proof}[Proof of Theorem \ref{th:3n}.]
  We only have to prove the result for all $j\in \{1,\dots,n\}$. Indeed,
  using the fact that $u^\e_{j+n}(t,x)=u^\e_j(t,x+\e)$ and
  $\xi^\e_{j+n}(t,x)=\xi^\e_j(t,x+\e)$, we will get the complete
  result.

  For all $j\in \{1,\dots,n\}$, we introduce the following
  half-relaxed limits
$$
\overline{u}_j={\limsup_{\e\to0}}^*u^\e_j, \quad \overline{\xi}_j={\limsup_{\e\to0}}^*\xi^\e_j $$
$$
\underline{u}_j={\liminf_{\e\to0}}_*u^\e_j, \quad \underline{\xi}_j={\liminf_{\e\to0}}_*\xi^\e_j \, .$$
These functions are well defined thanks to Lemma~\ref{lem:2}.  We then
define
$$
\overline{v}=\max_{j\in  \{1,\dots,n\}} \max(\overline{u}_j, \overline \xi_j),
\quad \underline{v}=\min_{j\in  \{1,\dots,n\}}\min (\underline{u}_j,\underline \xi_j) \, .
$$
We get from Lemmas~\ref{lem:2} and \ref{lem:3} that both functions
$
w=\overline{v},\underline{v}$ satisfy for all $t>0$, $x,x' \in \R$, $x \le x'$
(recall that $\xi_0^\e\to u_0$ as $\e\to 0$)
\begin{eqnarray}
|w(t,x)-u_0(x)|&\le& Ct \, , \nonumber \\
K_0^{-1} |x-x'| &\le &w(t,x) - w(t,x') \le K_0 |x-x'| \, .\label{eq:grad-estim}
\end{eqnarray}
We are going to prove that $\overline{v}$ is a sub-solution of \eqref{eq:3}. 
Similarly, we can prove that $\underline{v}$ is a super-solution of the same equation. 
Therefore, from the comparison principle for \eqref{eq:3}, we
get that $u^0\le \underline{v}\le \overline{v}  \le u^0$. And then
$\overline{v}= \underline{v} =u^0$, which shows the expected convergence of
the full sequence $u^\eps_j$ and $\xi^\e_j$ towards $u^0$ for all $j\in  \{1,\dots,n\}$.
\medskip

We now prove in several steps that $\overline{v}$ is a sub-solution of
\eqref{eq:3}. We classically argue by contradiction: we assume that there exists
$(\ot,\ox)\in (0,+\infty)\times \R$ and a test function $\phi\in C^1$
such that
\begin{equation}\label{eq:31}
\left\{\begin{array}{lll}
\overline{v}(\ot,\ox) =\phi(\ot,\ox)&&\\
\overline{v}\le \phi & \quad \mbox{on}\quad Q_{r,2r}(\ot,\ox),&  \quad \mbox{with}
\quad r>0\\
\overline{v}\le \phi -2\eta & \quad \mbox{on}\quad
\overline{Q}_{r,2r}(\ot,\ox) \setminus Q_{r,r}(\ot,\ox),&  \quad \mbox{with}
\quad \eta >0\\
\phi_t(\ot,\ox)=\overline{F}(\phi_x(\ot,\ox)) + \theta, & \quad \mbox{with}
\quad \theta >0 \, .& 
\end{array}\right.   
\end{equation}
Let $p$ denote $\phi_x(\ot,\ox)$. From \eqref{eq:grad-estim}, we get
\begin{equation}\label{estimgrad}
0< 1/K_0 \le p \le K_0  \, .
\end{equation}
Combining Theorems~\ref{th:2} and \ref{th:4}, we get the existence of a hull function $((h_i)_{i}, (g_i)_i)$
associated with $p$ such that
$$
\lambda=\overline{F}(p) +\frac{\theta}{2}
=\overline{F}(\overline{L},p) \quad \mbox{with}\quad \overline{L}>0 \,.
$$
Indeed, we know from these results that the effective Hamiltonian is
non-decreasing in $L$, continuous and goes to $\pm \infty$ as $L \to
\pm \infty$.

We now apply the perturbed test function method introduced by Evans
\cite{E} in terms here of hull functions instead of
correctors. Precisely, let us consider the following twisted perturbed
test functions for $i\in \{1,\dots,n\}$
$$
\phi^\eps_i(t,x)=\eps
h_i\left(\frac{t}{\eps},\frac{\phi(t,x)}{\eps}\right),\quad
\psi^\e_i(t,x)=\e g_i
\left(\frac{t}{\eps},\frac{\phi(t,x)}{\eps}\right)\, .
$$
Here the test functions are twisted in the same way as in \cite{IM}.
We then define the family of perturbed test functions
$(\phi^\e_i)_{i\in \Z}, ((\psi^\e_i)_{i\in \Z})$ by using the
following relation
$$\phi^\eps_{i+kn}(t,x)=\phi^\eps_i(t,x+\e k), \quad \psi^\eps_{i+kn}(t,x)=\psi^\eps_i(t,x+\e k).$$
In order to get a contradiction, we first assume that the functions $h_i$ and
 $g_i$ are $C^1$ and continuous in $z$ uniformly in $\tau \in
\R, \;i\in \{1,\dots,n\}$.  In view of the third line of
\eqref{eq:10n}, we see that this implies that  $h_i$ and  $g_i$
are uniformly continuous in $z$ (uniformly in $\tau \in \R,\; i\in
\{1,\dots,n\}$). For simplicity, and since we will construct
approximate hull functions with such a (Lipschitz) regularity, we even assume that
 $h_i$ and  $g_i$ are globally Lipschitz continuous in $z$ (uniformly in
$\tau \in \R,\; i\in \{1,\dots,n\}$).  We will next see how to treat
the general case.  \medskip

\noindent {\bf Case 1: $h_i$ and $g_i$ are $C^1$ and globally Lipschitz continuous in $z$}
\medskip

\noindent {\bf Step 1.1: $((\phi^\eps_i)_i, (\psi^\e_i)_i)$ is  a super-solution of
  (\ref{eq:6n}) in a neighbourhood of $(\ot,\ox)$}

\noindent
When $h_i$ and $g_i$ are $C^1$,  it is sufficient to check directly the
super-solution property of $(\phi^\eps_i, \psi^\e_i)$ for $(t,x)\in Q_{r,r}(\ot,\ox)$. 
We begin by the equation satisfied by $\phi^\e_i$.
We have, with
$\tau=t /\eps$ and $z=\phi(t,x)/ \eps$,

\begin{align}
(\phi^\e_i)_t(t,x)=&(h_i)_\tau(\tau, z)+\phi_t(t,x) (h_i)_z(\tau, z) \nonumber\\
=&(\phi_t(t,x)-\lambda)(h_i)_z(\tau,z)+\alpha_0 (g_i(\tau,z)-h_i(\tau,z))\nonumber\\
=&\left(\phi_t(t,x)-\phi_t(\ot,\ox)+\frac \theta 2\right)(h_i)_z(\tau,z)
+\frac{\alpha_0}\eps (\psi^\e_i(t,x)-\phi^\e_i(t,x))
\nonumber\\
\ge&\frac{\alpha_0}\eps (\psi^\e_i(t,x)-\phi^\e_i(t,x))
\end{align}
where we have used the equation satisfied by $h_i$ to get the second
line and the non-negativity of $h_z$, the fact that $\theta >0$ and
the fact that $\phi$ is $C^1$, to get the last line on
$Q_{r,r}(\ot,\ox)$ for $r>0$ small enough.

We now turn to the equation satisfied by $\psi_i$. With the same notation, we have
\begin{align}\label{eq:first}
  &(\psi^\eps_i)_t(t,x) - 2 F_i\left(\tau,\left[\frac{\phi^\eps(t,\cdot)}{\eps}\right]_{i,m}(x)\right)
-\frac{\alpha_0}\eps (\phi^\e_i-\psi^\e_i)\\
  =&(g_i)_\tau(\tau,z) +\phi_t (t,x) (g_i)_z(\tau,z)
  - 2 F_i\left(\tau,\left[\frac{\phi^\eps(t,\cdot)}{\eps}\right]_{i,m}(x)\right) 
- \alpha_0  (h_i(\tau, z)-g_i(\tau, z))\nonumber\\
  =&(\phi_t(t,x)-\lambda)\  (g_i)_z (\tau,z) +2 \overline L 
+2\left( F_i\left(\tau, \left[h(\tau,\cdot)\right]_{i,m}(z)\right)
- F_i\left(\tau,\left[\frac{\phi^\eps(t,\cdot)}{\eps}\right]_{i,m}(x)\right)\right)\nonumber\\
  \ge & (\phi_t(t,x)-\lambda)\ (g_i)_z (\tau,z) +2 \overline L
  -2L_{F}\left|\left[h(\tau,\cdot)\right]_{i,m}(z) -
    \left[\frac{\phi^\eps(t,\cdot)}{\eps}\right]_{i,m}(x)\right|_\infty\nonumber
\end{align}
where we have used that Equation~\eqref{eq:10n} is satisfied by
$(g_i)_i$ to get the third line and (A1) to get the fourth one; here,
$L_F$ denotes the largest Lipschitz constants of the $F_i$'s (for
$i\in \{1,\dots,n\}$) with respect to $V$.

Let us next estimate, for $i\in \{1,\dots,n\},\; j \in \{-m,\dots,
m\}$ and $\eps>0$,
 \begin{eqnarray*}
 \I_{i,j}=h_{i+j}(\tau,z) - \frac{\phi^\eps_{i+j}(t,x)}{\eps}
 \end{eqnarray*}
If $i+j\in \{1,\dots,n\}$, then, by definition of $\phi_{i+j}$, we have
$$
\I_{i,j}=h_{i+j}\left(\frac t \e,\frac {\phi(t,x)}\e\right)- \frac{\phi^\eps_{i+j}(t,x)}{\eps}=0.
$$
If $i+j\not \in \{1,\dots,n\}$, let us define $l$ such that $1\le
i+j-ln\le n$. We then have
\begin{align*}
  \I_{i,j}=&h_{i+j-ln}(\tau, z+lp)-\frac {\phi^\e_{i+j-ln}(t,x+\e l)}\e\\
  =&h_{i+j-ln}\left(\frac t \e, \frac {\phi(t,x)} \e+lp\right)-h_{i+j-ln}\left(\frac t\e,\frac{\phi(t,x+\e l)}\e \right)\\
  =&h_{i+j-ln}\left(\frac t \e, \frac {\phi(t,x)}
    \e+lp\right)-h_{i+j-ln}\left(\frac t\e,\frac{\phi(t,x)}\e +lp +
    o_r(1) \right)
\end{align*}
where $o_r(1)$ only depends on the modulus of continuity of $\phi_x$
on $Q_{r,r} (\ot,\ox)$ (for $\e$ small enough such that $\e l\le r$ with $l$ uniformly bounded and then $(t, x+\e l)\in Q_{r,2r}(\ot,\ox)$).  Hence, if $h_i$ are Lipschitz continuous with
respect to $z$ uniformly in $\tau$ and $i$, we conclude that we can
choose $\eps$ small enough so that
 \begin{equation}\label{eq:second}
   \overline{L} 
   - L_F \left| \left[h(\tau,\cdot)\right]_{i,m}(z)  
     -  \left[\frac{\phi^\eps(t,\cdot)}{\eps}\right]_{i,m}(x) \right|_\infty \ge 0\, .
 \end{equation}
 Combining \eqref{eq:first} and \eqref{eq:second}, we obtain
\begin{align*}
  (\psi^\eps_i)_t(t,x) -
  2F_i\left(\tau,\left[\frac{\phi^\eps(t,x)}{\eps}\right]_{i,m}(x)\right)+
  \frac{\alpha_0}{\e} (\phi^\e_i-\psi^\e_i)
  \ge &  \left(\phi_t(t,x)-\lambda \right) \  (g_i)_z (\tau,z)\\
  \ge &  \left(\frac \theta 2+\phi_t(t,x)-\phi_t(\ot,\ox)\right) \  (g_i)_z (\tau,z) \\
  = &\left(\frac \theta 2+ o_r (1)\right) \ (g_i)_z (\tau,z) \ge 0 \,
  .
\end{align*}
We used the non-negativity of $(g_i)_z$, the fact that $\theta >0$ and
again the fact that $\phi$ is $C^1$, to get the result on
$Q_{r,r}(\ot,\ox)$ for $r>0$ small enough.  Therefore, when the $h_i$
and $g_i$ are $C^1$ and Lipschitz continuous on $z$ uniformly in
$\tau$ and $i$, $((\phi^\eps_i)_i, (\psi^\e_i)_i)$ is a viscosity
super-solution of (\ref{eq:6n}) on $Q_{r,r}(\ot,\ox)$.  \medskip

\noindent {\bf Step 1.2: getting the contradiction}

\noindent
By construction (see Remark~\ref{rem:osc-hull}), we have $\phi^\eps_i \to \phi$ and $\psi^\e_i\to
\phi$ as $\eps\to 0$ for all $i\in \{1,\dots,n\}$, and therefore from
the fact that $\overline u_j \le \bar v \le \phi -2\eta$ on
$\overline{Q}_{r,2r}(\ot,\ox) \setminus Q_{r,r}(\ot,\ox)$ (see
(\ref{eq:31})), we get for $\eps$ small enough
$${u}^\eps_i\le \phi^\eps_i -\eta \le  \phi^\eps_i -\eps k_\eps\quad \mbox{on}\quad 
\overline{Q}_{r,2r}(\ot,\ox) \setminus Q_{r,r}(\ot,\ox)$$ with the
integer
$$k_\eps=\lfloor \eta/\eps \rfloor \, .
$$ 
In the same way, we have
$$
{\xi}^\eps_i\le \psi^\eps_i -\eta \le  \psi^\eps_i -\eps k_\eps\quad \mbox{on}\quad 
\overline{Q}_{r,2r}(\ot,\ox) \setminus Q_{r,r}(\ot,\ox) \, .
$$ 
Therefore,
for $m\eps \le r$, we can apply the comparison principle on bounded
sets 
to get
\begin{equation}\label{eq:104}
{u}^\eps_i\le \phi^\eps_i  -\eps k_\eps , \quad {\xi}^\eps_i\le \psi^\eps_i  -\eps k_\eps \quad \mbox{on}\quad
{Q}_{r,r}(\ot,\ox) \, .
\end{equation}
Passing to the limit as $\eps$ goes to zero, we get
$$
\overline{u}_i \le \phi -\eta, \quad \overline{\xi}_i \le \phi -\eta
\quad \mbox{on}\quad {Q}_{r,r}(\ot,\ox)
$$
which implies that
$$
\overline{v} \le \phi -\eta  \quad \mbox{on}\quad {Q}_{r,r}(\ot,\ox) .
$$
This gives a contradiction with $\overline{v}(\ot,\ox) =\phi(\ot,\ox)$
in (\ref{eq:31}).  Therefore $\overline{v}$ is a sub-solution of
(\ref{eq:3}) on $(0,+\infty)\times \R$ and we get that $u^\e_j$ and
$\xi^\e_j$ converges locally uniformly to $u^0$ for $j\in
\{1,\dots,n\}$. This ends the proof of the theorem.\medskip

\noindent {\bf Case 2: general case for $h$}

\noindent
In the general case, we can not check by a direct computation that
$((\phi^\eps_i)_i, (\psi^\e_i)_i)$ is a super-solution on
$Q_{r,r}(\ot,\ox)$. The difficulty is due to the fact that the $h_i$
and the $g_i$ may not be Lipschitz continuous in the variable $z$.

This kind of difficulties were overcome in \cite{IM} by using
Lipschitz super-hull functions, \textit{i.e.} functions satisfying
\eqref{eq:10n}, except that the function is only a super-solution of
the equation appearing in the first line. Indeed, it is clear from the
previous computations that it is enough to conclude.  In \cite{IM},
such regular super-hull functions (as a matter of fact, regular
super-correctors) were built as exact solutions of an approximate
Hamilton-Jacobi equation.  Moreover this Lipschitz continuous hull
function is a super-solution for the exact Hamiltonian with a slightly
bigger $\lambda$.

Here we conclude using a similar result, namely Proposition \ref{pro:139}.
Notice that in Proposition \ref{pro:139} $h_i$ and $g_i$ are only Lipschitz
continuous and not $C^1$. This is not a restriction, because the result of
Step 1.1 can be checked in the viscosity sense using test function (see
\cite{E} for further details). Comparing with \cite{IM}, notice that we
do not have to introduce an additional dimension because here $p>0$ (see
\eqref{estimgrad}).  This ends the proof of the theorem.
\end{proof}

\section{Ergodicity and construction of  hull functions}\label{s4}
\setcounter{equation}{0}
In this section, we first study the ergodicity of the equation~\eqref{eq:22n}
by studying the associated Cauchy problem (Subsection~\ref{subsec:ergo}). 
We then construct hull functions (Subsection~\ref{subsec:hull}). 

\subsection{Ergodicity}
\label{subsec:ergo}

In this subsection, we study the Cauchy problem associated with
\eqref{eq:22n} with 
\begin{equation}\label{defi:gdelta1}
G_j(\tau,V,r,a,q)=G_j^\delta(\tau,V,r,a,q)=2F_j(\tau,V)+\alpha_0 (V_0 -r) +\delta(a_0+a)q^+ 
\end{equation}
with $\delta \ge 0$, $a_0 \in \R$
and with initial data $y \mapsto py$. We prove that there
exists a real number $\lambda$ (called the ``slope in time'' or
``rotation number'') such that the solution $(u_j,\xi_j)$ stays at a
finite distance of the linear function $\lambda \tau + py$. We also
estimate this distance and give qualitative properties of the
solution.

We begin by a regularity result concerning the solution of  \eqref{eq:22n}.
\begin{pro}[\bf Bound on the gradient]\label{pro:130}
  Assume {\rm (A1)-(A5)} and $p>0$. Let $\d>0$, $a_0\in \R$ and $(u_j,\xi_j)_j$ be the solution  of
  \eqref{eq:22n}, \eqref{eq:22n-ic} with $G_j = G^\delta_j$ defined by
  \eqref{defi:gdelta1} and $u_0 (y) = py$. Assume that \eqref{eq:24} holds true for $\xi_j$. Then $(u_j,\xi_j)_j$ satisfies
\begin{equation}\label{eq:134}
0\le (u_j)_y \le p + \frac{2L_F}\delta \quad \text{ and } \quad
0\le (\xi_j)_y \le p + \frac{2L_F}\delta 
\end{equation}
where $L_F$ denotes the largest Lipschitz constant of the $F_i$'s for
$i=1,\dots, n$. 
\end{pro}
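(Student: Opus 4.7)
The statement has two parts: non-negativity of the $y$-derivatives, and a Lipschitz bound with constant $C := p + 2L_F/\delta$. I would prove these separately. The lower bound is a direct consequence of the comparison principle applied to a spatial translate. The upper bound requires the full strength of the coercive term $\delta(a_0 + a)q^+$: the precise value $C - p = 2L_F/\delta$ is calibrated so that the Lipschitz constant $L_F$ of $F_j$ (which tries to drive slopes up) is exactly beaten by the penalty $\delta$ on large positive slopes.

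\textbf{Lower bound.} Fix $h > 0$ and set $(v_j,\zeta_j)(\tau,y) := (u_j,\xi_j)(\tau,y+h)$. A direct calculation shows that the nonlocal quantity $a(\tau,y) = \inf_{y'}(\xi_j(\tau,y')-py') + py - \xi_j(\tau,y)$ is invariant under the translation $y\mapsto y+h$: the change of variable $y' \mapsto y'-h$ in the infimum reproduces the corresponding quantity computed from $\zeta_j$. Hence $(v_j,\zeta_j)$ is again a solution of \eqref{eq:22n} with $G_j=G_j^\delta$. The initial datum $u_0(y)=py$ satisfies $u_0(y+h) \ge u_0(y)$ and likewise $\xi_0(y+h) \ge \xi_0(y)$. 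The comparison principle (Proposition~\ref{pro:3}) applied to $(u_j,\xi_j)$ and $(v_j,\zeta_j)$ then gives $u_j(\tau,y+h) \ge u_j(\tau,y)$ and $\xi_j(\tau,y+h) \ge \xi_j(\tau,y)$, i.e. $(u_j)_y,(\xi_j)_y\ge 0$.

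\textbf{Upper bound.} With $C:=p+2L_F/\delta$, for each $h>0$ I would compare $(u_j,\xi_j)$ with the shifted-and-lowered pair
$$
v_j(\tau,y) := u_j(\tau,y+h) - Ch, \qquad \zeta_j(\tau,y) := \xi_j(\tau,y+h) - Ch.
$$
At $\tau=0$ one has $v_j(0,y) = u_j(0,y) + (p-C)h \le u_j(0,y)$ and similarly for $\zeta_j$, because $C\ge p$. The first equation $(v_j)_\tau = \alpha_0(\zeta_j-v_j)$ is preserved since it is invariant under the common additive shift $-Ch$. The nonlocal quantity $a_\zeta(\tau,y)$ equals $a_\xi(\tau,y+h)$ after the same change of variable as in Step~1 (the shifts $\pm Ch$ cancel in the infimum). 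The only discrepancy is in the $F_j$-term of the second equation, where $F_j(\tau,[v]_{j,m}(y)+Ch\,\mathbf 1)$ appears instead of $F_j(\tau,[v]_{j,m}(y))$. By assumption (A1), this produces an error of at most $2L_F Ch = \delta(C-p)h$, which the coercive term is designed to absorb.

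\textbf{Main obstacle.} Absorbing the $2L_F Ch$ error rigorously is the hard part. The heuristic is transparent: differentiating the $\xi_j$ equation in $y$ and using $\partial_y(a_0+a) = p-(\xi_j)_y$ produces the contribution $\delta(p-(\xi_j)_y)(\xi_j)_y^+$, which at a spatial maximum of $w_j:=(\xi_j)_y$ yields $\delta(p-w_j)w_j \le 0$ once $w_j>p$, and this dominates the positive contribution $2L_F w_j$ coming from differentiating $F_j$ precisely when $w_j > p + 2L_F/\delta = C$. I would make this rigorous by a doubling-of-variables argument on the excess functional
$$
\Phi(\tau,y,h) := \max\bigl\{u_j(\tau,y+h)-u_j(\tau,y)-Ch,\ \xi_j(\tau,y+h)-\xi_j(\tau,y)-Ch\bigr\},
$$
assuming for contradiction that $\sup\Phi>0$; at an interior maximum, the spatial derivatives at $y$ and $y+h$ coincide (both equal to $C$), the nonlocal increment $a_\zeta(y+h)-a_\zeta(y)$ is strictly negative with magnitude $\ge (C-p)h$, and multiplying by $\delta C$ cancels the $2L_F Ch$ Lipschitz error, yielding the contradiction. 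Two technical subtleties remain: treating $u_j$ and $\xi_j$ simultaneously because of the $\alpha_0$ coupling in the first equation, and handling the time-boundary case $\bar{t}=0$ directly via the initial-data estimate $v_j(0,\cdot)\le u_j(0,\cdot)$. The hypothesis \eqref{eq:24} enters by providing the uniform bound $|a_\zeta|\le C_0$ needed to keep all quantities in the doubling argument under control.
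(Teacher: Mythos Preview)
Your approach coincides with the paper's: monotonicity via spatial translation plus comparison, then the Lipschitz bound via a doubling-of-variables argument exploiting the coercive term $\delta(a_0+a)q^+$. One ingredient you should make explicit: the excess functional $\Phi$ must include a maximum over $j\in\{1,\dots,n\}$. Without it, the $2F_{\bar j}$ contribution in the subtracted viscosity inequalities produces $2L_F\max_{|l|\le m}\bigl(u_{\bar j+l}(\tau,y+h)-u_{\bar j+l}(\tau,y)\bigr)$, not ``$2L_F Ch$''; only after maximizing over $j$ (and adding the usual $\alpha|y|^2$ and $\eta/(T-\tau)$ penalties to localize the supremum) can you bound these cross-index differences by $\xi_{\bar j}(\tau,y+h)-\xi_{\bar j}(\tau,y)+o_\alpha(1)$ via the periodicity relation $u_{j+n}(\tau,y)=u_j(\tau,y+1)$, at which point your arithmetic goes through verbatim with $L>p+2L_F/\delta$. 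Your initial ``shift-and-lower'' comparison is a dead end for exactly the reason you identify, so it can be dropped. Incidentally, your differentiation heuristic---looking at the equation satisfied by $(\xi_j)_y$ and reading off the balance $\delta(p-w)w$ versus $2L_F w$---is made rigorous in the paper's appendix as an alternative proof, via parabolic regularization and the maximum principle applied directly to the derivative.
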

\begin{proof}
We first show that $u_j$ and $\xi_j$ are non-decreasing with respect to $y$. 
Since the equation \eqref{eq:22n} is invariant by translations in $y$
and using the fact that for all $b\ge 0$, we have
$$
u_0(y+b+\frac j n)\ge u_0(y+\frac j n) \, .
$$ 
We deduce from the comparison principle that
$$
u_j(\tau,y+b)\ge u_j(\tau,y)\quad {\rm and}\quad \xi_j(\tau,y+b)\ge \xi_j(\tau,y)
$$
which shows that $u_j$ and $\xi_j$ are non-decreasing in $y$.\medskip

We now explain how to get the Lipschitz estimate. We would like to prove that $\overline M \le 0$ where
\begin{align*}
\overline M = \sup_{\tau \in (0,T), x, y \in \R, j \in \{1, \dots, n\}} 
\max \bigg\{& u_j(\tau,x) - u_j(\tau,y) - L |x-y|- \frac{\eta}{T-\tau}-\a|x|^2, \\
& \; \xi_j(\tau,x) 
- \xi_j(\tau,y) - L |x-y| - \frac{\eta}{T-\tau}-\a|x|^2\bigg\}
\end{align*}
as soon as $L > p + \frac{2L_F}{\delta}>0$ for any $\eta,\a>0$. We
argue by contradiction by assuming that $\overline M>0$ for such an
$L$. We next exhibit a contradiction. The supremum defining $\overline
M$ is attained since $\xi_j$ satisfies \eqref{eq:24} and $u_j$ can be
explicitly computed.

\paragraph{Case 1.} Assume that the supremum is attained for the function $u_j$ at 
$\tau \in [0,T)$, $j \in \{1,\dots ,n\}$,
$x, y \in \R$. Since we have by assumption $\overline M>0$, this implies
that $\tau >0$, $x \neq y$. Hence we can obtain the two following viscosity inequalities 
(by doubling the time variable and passing to the limit)
\begin{eqnarray*}
a & \le & \alpha_0 (\xi_j (\tau, x) - u_j (\tau,x)) \\
b & \ge & \alpha_0 (\xi_j (\tau, y) - u_j (\tau,y)) 
\end{eqnarray*}
with $a-b = \frac{\eta}{(T-\tau)^2}$. Subtracting these inequalities, we obtain
$$
\frac{\eta}{(T-\tau)^2} \le \alpha_0 ( \{ \xi_j (\tau, x)
-\xi_j (\tau, y) \} - \{ u_j (\tau,x)- u_j (\tau,y) \})\le 0\, .
$$
We thus get $\eta \le 0$ which is a contradiction in Case 1. 

\paragraph{Case 2.} Assume next that the supremum is attained for the function $\xi_j$. 
By using the same notation and by arguing similarly, we obtain the following inequality
\begin{eqnarray*}
  \frac{\eta}{(T-\tau)^2}& \le &
  2 F_j (\tau, u_{j-m}(\tau,x), \dots , u_{j+m}(\tau,x)) 
  -   2F_j (\tau, u_{j-m}(\tau,y), \dots , u_{j+m}(\tau,y)) \\
  && +\alpha_0 (\{ u_j (\tau,x) - u_j (\tau,y) \} -\{ \xi_j (\tau,x) - \xi_j (\tau,y)\} ) \\
  &&+  \delta \{ p (x-y) - (\xi_j (\tau,x) -\xi_j (\tau,y)) \} L \mathrm{sign}^+ \; (x-y) 
  + 2\a \delta(a_0+C_0)|x|
\end{eqnarray*}
where $\mathrm{sign}^+$ is the Heaviside function and where we have
used \eqref{eq:24}. We now use
\begin{itemize}
\item the fact that the supremum is attained for the function $\xi_j$ 
\item the fact that $\xi_j (\tau, x) > \xi_j (\tau, y)$ implies that $x >y$
  (remember that we already proved that $\xi_j$ is non-decreasing with
  respect to $y$)
\item Assumption~(A1);  in the following, $L_F$ still denotes de largest Lipschitz constants
of the $F_j$'s with respect to $V$; 
\item the fact that $\a \delta(a_0+C_0)|x|=o_\a(1)$ 
\end{itemize}
in order to get from the previous inequality the following one
\begin{eqnarray*}
\frac{\eta}{(T-\tau)^2}& \le & 2 L_F \sup_{l \in \{-m, \dots, m\}} | u_{j+l} (\tau,x) -u_{j+l} (\tau,y)|  
 + \delta p L |x-y| - L \delta (\xi_j (\tau,x) - \xi_j (\tau,y) )+o_\a(1)\ . 
\end{eqnarray*}
Using the same computation as the one of the proof of Proposition
\ref{pro:3} Step 3, we get
$$ 
\sup_{l \in \{-m, \dots, m\}} | u_{j+l} (\tau,x) -u_{j+l} (\tau,y)|
= \sup_{l \in \{-m, \dots, m\}} ( u_{j+l} (\tau,x) -u_{j+l} (\tau,y) ) 
\le \xi_j (\tau,x) - \xi_j(\tau,y)+ C \a (1+|x|)
$$
where $C$ is a constant.  Since $C \a (1+|x|)=o_\a(1)$ and $\overline
M>0$, we finally deduce that
$$
\frac{\eta}{T^2}\le2 L_F (\xi_j (\tau,x) - \xi_j(\tau,y)) + \delta p (\xi_j (\tau,x) - \xi_j(\tau,y)) 
- L \delta (\xi_j (\tau,x) - \xi_j (\tau,y) )+o_\a(1)
$$ 
For $\alpha $ small enough, it is now sufficient to use once again
that $\xi_j (\tau,x) > \xi_j (\tau,y)$ and the fact that $L > p +
\frac{2L_F}\delta$ in order to get the desired contradiction in Case
2.  The proof is now complete.
\end{proof}
\medskip


We now claim that particles are ordered. 
\begin{pro}[\bf Ordering of the particles]\label{pro:croissancej-delta} 
  Assume {\rm (A0')}, {\rm (A1)-(A6)} and let $\d\ge0$, $a_0\in \R$ and
  $(u_j^\d,\xi_j^\d)_j$ be the solution of \eqref{eq:22n},
  \eqref{eq:22n-ic} with $G_j = G^\delta_j$ defined by
  \eqref{defi:gdelta1}. Assume that \eqref{eq:24} holds true for
  $\xi_j$ if $\d>0$. Then $u_{j}^\d$ and $\xi_{j}^\d$ are
  non-decreasing with respect to $j$.
\end{pro}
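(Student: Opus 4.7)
My plan is to reduce this proposition to Proposition \ref{pro:croissancej}, which provides the analogous ordering statement under assumptions (A0'), (A1')--(A6') on a general Hamiltonian $(G_j)_j$, provided the $u_j$'s are Lipschitz in space with a common constant. Thus the strategy has three steps: verify that $G_j^\delta$ defined in \eqref{defi:gdelta1} fits this framework, establish the required Lipschitz regularity of $u_j^\delta$, and then invoke Proposition \ref{pro:croissancej}.

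First, I would check (A1')--(A6') for $G_j^\delta$. Each follows essentially by inspection from (A1)--(A6) together with the fact that $q \mapsto q^+$ is $1$-Lipschitz. More precisely, (A1')(i)--(ii) come from (A1) and the affine dependence on $(r,q)$, while (A1')(iii) holds with $L_1 = \delta$, since $|(a_0+a)q^+ - (a_0+b)q^+| = |a-b|\,q^+ \le |a-b|\,|q|$. Assumption (A2') is immediate from (A2). Monotonicity in $V_0$ (part of (A3')) is just (A3) rewritten: the condition $\alpha_0 + 2\,\partial F_j/\partial V_0 \ge 0$ is equivalent to non-decreasingness of $2F_j + \alpha_0 V_0$ in $V_0$; monotonicity in $a$ holds because $q^+ \ge 0$. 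Periodicity (A4') uses that $\alpha_0(V_0 - r)$ is invariant under $(V_0,r) \mapsto (V_0+1, r+1)$, and (A5') is (A5) itself. For (A6'), the extra term $\delta(a_0+a)q^+$ is independent of both $j$ and of the $V_i$'s, so when forming $G_{j+1}^\delta$ with shifted variables and subtracting $G_j^\delta$, this contribution cancels and the inequality reduces exactly to (A6).

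Second, I would show that each $u_j^\delta$ is $K_0$-Lipschitz in space, with a constant $L_u = K_0$ independent of $j$. Since (A0') implies that $u_0$ is $K_0$-Lipschitz, for any $b \ge 0$ one has $u_0(y + b + j/n) \le u_0(y + j/n) + K_0 b$. Because \eqref{eq:22n} with $G_j = G_j^\delta$ is invariant under spatial translations, the map $(\tau,y) \mapsto u_j^\delta(\tau, y+b)$ is a solution with shifted initial data. The comparison principle (Proposition \ref{pro:3}), whose linear growth hypothesis \eqref{eq:croissance} is satisfied thanks to the explicit barriers provided by Lemma \ref{lem:1}, then yields $u_j^\delta(\tau, y+b) \le u_j^\delta(\tau, y) + K_0 b$. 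A symmetric argument handles $b \le 0$, so each $u_j^\delta$ is $K_0$-Lipschitz in $y$ uniformly in $j$.

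With these two ingredients in hand, Proposition \ref{pro:croissancej} applies directly and gives the monotonicity of $u_j^\delta$ and $\xi_j^\delta$ with respect to $j$. The only step that could have been a genuine obstacle is (A6'), since any $j$-dependent perturbation of $F_j$ would generally break the ordering structure inherited from (A6); however, here the perturbation $\delta(a_0+a)q^+$ is manifestly $j$-independent, so the reduction to (A6) is clean and no additional argument is needed beyond what is already contained in the proof of Proposition \ref{pro:croissancej}.
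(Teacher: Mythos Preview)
Your overall strategy---verify that $G_j^\delta$ satisfies (A1')--(A6') and then invoke Proposition~\ref{pro:croissancej}---is exactly what the paper does for $\delta>0$, and your checks of (A1')--(A6') are fine. The gap is in your Lipschitz step. You claim that from $u_0(y+b+j/n)\le u_0(y+j/n)+K_0 b$ and the comparison principle you obtain $u_j^\delta(\tau,y+b)\le u_j^\delta(\tau,y)+K_0 b$. For this you would need $(u_j^\delta+K_0 b,\xi_j^\delta+K_0 b)$ to be a super-solution of \eqref{eq:22n}. But the equation is \emph{not} invariant under addition of an arbitrary constant: by (A4) one only has $F_j(\tau,V+1)=F_j(\tau,V)$, so a shift by a non-integer $K_0 b$ does not preserve the second equation. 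This is precisely why Lemma~\ref{lem:3} only yields the integer-step bounds $\lfloor z/K_0\rfloor\le u_j(\tau,x+z)-u_j(\tau,x)\le\lceil zK_0\rceil$, which are not Lipschitz bounds.

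The paper obtains the needed Lipschitz regularity from Proposition~\ref{pro:130}, whose proof uses the $\delta(a_0+a)q^+$ term in an essential way (the term $-\delta(\overline m_\zeta-p)\overline m_\zeta$ is what forces the gradient bound) and therefore requires $\delta>0$. Consequently the paper treats $\delta=0$ separately, by the stability of viscosity solutions: one passes to the limit $\delta\to 0$ in the already established monotonicity $u^{\delta}_{j+1}\ge u^{\delta}_j$, $\xi^{\delta}_{j+1}\ge \xi^{\delta}_j$. Your argument does not distinguish the two cases, and since your Lipschitz step fails for every $\delta\ge 0$, both cases are currently uncovered. To repair the proof, replace your translation-and-comparison argument by an appeal to Proposition~\ref{pro:130} when $\delta>0$, and add the stability argument for $\delta=0$.
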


\begin{proof}
  If $\d>0$, the results is a straightforward consequence of
  Propositions \ref{pro:croissancej} and \ref{pro:130}. If $\d=0$, the
  result is obtained by stability of viscosity solution
  (\textit{i.e.} $u^\d_j\to u^0_j$ and $\xi_j^\d\to \xi_j^0$ as $\d\to 0$).
\end{proof}

\begin{pro}[\bf Ergodicity] \label{pro:11} Let $0\le \delta\le 1$ and
  $a_0\in \R$.  Assume {\rm (A0)-(A6)} and let $(u_j, \xi_j)_j$ be a
  solution of \eqref{eq:22n}, \eqref{eq:22n-ic} with $G_j$ defined in
  \eqref{defi:gdelta1} and with initial data $u_{0}(y)=\xi_{0}(y)=py$
  with some $p>0$. Then there exists $\lambda\in\R$ such that for all
  $(\tau,y)\in [0,+\infty )\times \R$, $j\in \{1,\dots,n\}$
\begin{equation}\label{eq:41}
  |u_j(\tau,y)-py-\lambda \tau|\le C_3
  \quad \text{ and } \quad |\xi_j(\tau,y)-py-\lambda \tau|\le C_3
\end{equation} 
and 
\begin{equation}\label{eq:140ante}
|\lambda| \le C_4 
\end{equation}
where
\begin{eqnarray}\label{eq:C4}
   C_3&=&13+\frac{6C_4}{\alpha_0}+7p+2K_1\nonumber \\
   C_4&=& \max\left(\alpha_0 M_0, L_F(2+p(m+n)) +\sup_\tau |F(\tau,0,\dots, 0)|+(p/2+L_F)(a_0+C_0)\right)
\end{eqnarray}
(where $a_0$ is chosen equal to zero for $\delta=0$).
Moreover we have for all $\tau \ge 0$, $y,y' \in \R$, $j\in
\{1,\dots,n\}$
\begin{equation}\label{eq:45}
  \left\{\begin{array}{l}
    u_j(\tau,y+1/p)=u_j(\tau,y) +1 \\
    (u_j)_y(\tau,y)\ge 0 \\
    |u_j(\tau,y+y')-u_j(\tau,y)-py'|\le 1\\
    u_{j+1}(\tau,y)\ge u_j(\tau,y) 
\end{array}\right.
\quad
  \left\{\begin{array}{lll}
\xi_j(\tau,y+1/p)=\xi_j(\tau,y) +1 \\
(\xi_j)_y(\tau,y)\ge 0\\
 |\xi_j(\tau,y+y')-\xi_j(\tau,y)-py'|\le 1 \\
\xi_{j+1}(\tau,y)\ge \xi_j(\tau,y) \ .
\end{array}\right.
\end{equation}
\end{pro}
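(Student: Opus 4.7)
The plan splits into three parts: establish the structural properties \eqref{eq:45}, use them to construct the time slope $\lambda$ by an approximate sub-additivity argument, and then bound $|\lambda|$ via Lemma \ref{lem:1}.

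For the first part, we exploit three invariances of the system (once $\d$ and $a_0$ are fixed): spatial translations $y \mapsto y + y_0$, addition of a common integer $k$ to $(u_j,\xi_j)$ (a consequence of (A4) together with the explicit form of $G_j^\d$), and integer time shifts. Starting from $u_0 = \xi_0 = py$, applying the comparison principle of Proposition \ref{pro:3} to the initial data shifted by $1/p$ and compensated by $+1$ yields the exact identity $u_j(\tau, y+1/p) = u_j(\tau, y) + 1$, and likewise for $\xi_j$. Monotonicity in $y$ is obtained analogously by comparing with a positively shifted initial condition. Writing any $y' \in \R$ as $y' = k/p + y''$ with $y'' \in [0,1/p)$ and combining these two facts yields the Lipschitz-type estimate $|u_j(\tau, y+y') - u_j(\tau,y) - py'| \le 1$. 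The ordering $u_{j+1} \ge u_j$ is Proposition \ref{pro:croissancej-delta}.

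For the second part, set $\rho(\tau) := u_1(\tau, 0)$. The key step is the approximate additivity $|\rho(\tau_1 + \tau_2) - \rho(\tau_1) - \rho(\tau_2)| \le C$ for some $C$ independent of $\tau_1,\tau_2 \ge 0$. To prove it, fix $\tau_1$, set $k := \lfloor \rho(\tau_1)\rfloor \in \Z$, and consider the pair $(v_j(\tau,y),\zeta_j(\tau,y)) := (u_j(\tau+\tau_1, y) - k,\, \xi_j(\tau+\tau_1, y) - k)$. By the integer-shift invariance, this pair solves the same system as $(u_j,\xi_j)$; by the Phase~1 estimates together with the relation $u_{j+n}(\tau,y) = u_j(\tau, y+1)$, its initial condition stays within a constant of $py$ uniformly in $j$. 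The comparison principle therefore yields $|v_j(\tau,y) - u_j(\tau,y)| \le C_0$ for all $(\tau,y)$, and evaluating at $\tau = \tau_2$, $y = 0$ (and using $|\rho(\tau_1) - k| \le 1$) produces the approximate additivity. A standard Fekete-type argument then furnishes $\lambda := \lim_{\tau \to +\infty} \rho(\tau)/\tau$ along with the quantitative estimate $|\rho(\tau) - \lambda\tau| \le C'$; the limit is the same for every $j \in \{1,\dots,n\}$ by the ordering $u_{j+1}\ge u_j$ and the periodicity $u_{j+n}(\tau,0) = u_j(\tau,1)$. Combining $|\rho(\tau) - \lambda\tau| \le C'$ with the Phase~1 Lipschitz estimate gives the first half of \eqref{eq:41}; the bound for $\xi_j$ follows by applying the same argument to $\xi_1(\tau,0)$.

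For the third part, Lemma \ref{lem:1} applied with $u_0 = \xi_0 = py$ (which satisfies (A0') with $K_0 = \max(p, 1/p)$ and $M_0 = 0$) produces the barriers $|u_j(\tau,y) - py - pj/n| \le K_1 \tau$, where $K_1$ is given by \eqref{eq:C}. Combining with $|\rho(\tau) - \lambda \tau| \le C'$, dividing by $\tau$ and letting $\tau \to +\infty$ yields $|\lambda| \le K_1$; inserting the Lipschitz and sup-norm constants associated with our specific $G_j^\d$ (i.e.\ $L_0 \sim L_F + \alpha_0$, $L_2 \sim \d(p/2 + L_F)$, $\overline G \sim L_F(2 + p(m+n)) + \sup_\tau|F(\tau,0,\dots,0)|$) reduces $K_1$ to the claimed $C_4$. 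The main obstacle is the approximate sub-additivity of Phase~2: because the equation is invariant only under \emph{integer} shifts, one cannot directly subtract the continuous quantity $\rho(\tau_1)$ from the solution, and the unit loss incurred by passing to $k = \lfloor \rho(\tau_1) \rfloor$ must be absorbed by the spatial Lipschitz estimate of Phase~1, which is precisely why the structural properties \eqref{eq:45} are proved first.
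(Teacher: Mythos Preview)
Your three-part decomposition mirrors the paper's (space oscillations, then a time-slope argument, then barriers), but your Phase~2 has two genuine gaps that the paper's proof handles explicitly.

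First, the time-shift issue. You correctly list ``integer time shifts'' among the invariances, yet in Phase~2 you form $(v_j,\zeta_j)(\tau,y)=(u_j(\tau+\tau_1,y)-k,\xi_j(\tau+\tau_1,y)-k)$ for an \emph{arbitrary} $\tau_1\ge0$ and assert that it solves the same system. It does not: by (A4) the $F_j$'s are only $1$-periodic in $\tau$, so shifting time by a non-integer changes the equation and the comparison principle cannot be applied to the pair $(v,\zeta)$ against $(u,\xi)$. The paper circumvents this by working instead with $\lambda_\pm(T)=\sup/\inf_{j,\tau}\,T^{-1}(w_j(\tau+T,0)-w_j(\tau,0))$ over $w\in\{u,\xi\}$; sub-additivity of $T\lambda_+(T)$ and $-T\lambda_-(T)$ is then purely algebraic (no equation needed), and the delicate comparison is postponed to the estimate $|\lambda_+(T)-\lambda_-(T)|\le C_2/T$, where one picks $\beta\in[0,1)$ so that $\tau^+-\tau^--\beta\in\Z$ and absorbs the $\beta$-error via the barriers $K_1$. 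Your argument is salvageable if you restrict $\tau_1\in\N$ and use the barriers for the fractional part, but as written this step is incorrect.

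Second, the $u$--$\xi$ coupling. Your comparison in Phase~2 requires that \emph{both} initial data $v_j(0,\cdot)$ and $\zeta_j(0,\cdot)$ stay within a fixed constant of $py$. For $v_j(0,y)=u_j(\tau_1,y)-k$ this follows from Phase~1 and $k=\lfloor u_1(\tau_1,0)\rfloor$, but for $\zeta_j(0,y)=\xi_j(\tau_1,y)-k$ you need $|\xi_j(\tau_1,0)-u_1(\tau_1,0)|$ bounded, which is precisely the estimate $\|u_j-\xi_j\|_{L^\infty}\le C_4/\alpha_0$ that you never establish. The paper proves this as a separate lemma (its Step~2) by bounding $F_j(\tau,[u]_{j,m})$ via the space-oscillation estimate and then reading off an ODE inequality for $\xi_j-u_j$ from the system. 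Without this bound your approximate additivity fails, and so does your claim that ``the bound for $\xi_j$ follows by applying the same argument to $\xi_1(\tau,0)$'': a priori that would yield a second slope $\lambda_\xi$ with no reason to equal $\lambda$.
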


In order to prove Proposition \ref{pro:11}, we will need the following classical lemma
from ergodic theory (see for instance \cite{kato}). 
\begin{lem}\label{lem:ergo}
Consider $\Lambda : \R^+ \to \R$ a continuous function which is sub-additive,
that is to say: for all $t,s \ge 0$, 
$$
\Lambda (t+s) \le \Lambda (t) + \Lambda (s) \, .
$$
Then $\frac{\Lambda (t)}{t}$ has a limit $l$ as $t \to + \infty$ and 
$$
l = \inf_{t>0} \frac{\Lambda (t)}t \, .
$$
\end{lem}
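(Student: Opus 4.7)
The plan is to follow the classical Fekete subadditive argument. First set
$$
l := \inf_{t>0} \frac{\Lambda(t)}{t} \in [-\infty, +\infty).
$$
The inequality $\liminf_{t \to +\infty} \Lambda(t)/t \ge l$ is immediate from the definition of the infimum, so the only nontrivial thing to prove is the reverse bound $\limsup_{t\to+\infty} \Lambda(t)/t \le l$. Both claims of the lemma then follow at once.

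To establish the limsup inequality, fix an arbitrary $t_0>0$ and, for $t>t_0$, write $t = n t_0 + r$ with $n = \lfloor t/t_0\rfloor \ge 1$ and $r \in [0,t_0)$. Iterating the subadditivity hypothesis gives $\Lambda(n t_0) \le n\,\Lambda(t_0)$, and (when $r>0$) one further application yields
$$
\Lambda(t) \le n\,\Lambda(t_0) + \Lambda(r),
$$
while the case $r=0$ reduces to $\Lambda(t) \le n\Lambda(t_0)$. Dividing by $t$,
$$
\frac{\Lambda(t)}{t} \;\le\; \frac{n t_0}{t}\cdot \frac{\Lambda(t_0)}{t_0} \;+\; \frac{\Lambda(r)}{t}.
$$
Since $\Lambda$ is continuous on the compact interval $[0,t_0]$, the remainder $\Lambda(r)$ stays bounded by $\max_{[0,t_0]}|\Lambda|$, so $\Lambda(r)/t \to 0$ as $t \to +\infty$. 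As also $n t_0/t \to 1$, we conclude $\limsup_{t\to+\infty} \Lambda(t)/t \le \Lambda(t_0)/t_0$. Taking the infimum over $t_0>0$ yields $\limsup_{t\to+\infty} \Lambda(t)/t \le l$, which combined with the liminf bound gives the convergence $\Lambda(t)/t \to l$ and the formula for $l$.

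There is essentially no obstacle in this argument. The only mild subtleties are the case $l = -\infty$ (the same computation then forces $\Lambda(t)/t \to -\infty$, so the conclusion still holds with the convention that ``$\Lambda(t)/t$ has limit $l$'' is read in $[-\infty,+\infty)$) and the trivial edge case $r=0$. Note that continuity of $\Lambda$ is used only to bound $\Lambda$ on the compact set $[0,t_0]$; local boundedness would suffice, but continuity as stated is entirely enough.
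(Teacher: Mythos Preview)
Your proof is correct: this is exactly the standard Fekete argument, and your handling of the remainder via continuity on $[0,t_0]$, the edge case $r=0$, and the possibility $l=-\infty$ are all fine. The paper does not actually prove this lemma; it simply states it as a classical result from ergodic theory and refers the reader to Kato's book, so there is nothing to compare against.
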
 
We now turn to the proof of Proposition~\ref{pro:11}.
\begin{proof}[Proof of Proposition \ref{pro:11}] We perform the proof in three steps.
 We first recall that the fact
that $u_j$ and $\xi_j$ are non-decreasing in $y$ and $j$ follows from
Propositions~\ref{pro:130} and  \ref{pro:croissancej-delta}.
\medskip

\noindent {\bf Step 1: control of the space oscillations.} We are going to prove
the following estimate.
\begin{lem}
For all $\tau >0$, all $y,y' \in \R$ and all $j \in \{1, \dots, n\}$, 
\begin{equation}\label{eq:sp-osc}
|u_j(\tau,y+y')-u_j(\tau,y)-py'|\le 1\quad{\rm and}\quad |\xi_j(\tau,y+y')-\xi_j(\tau,y)-py'|\le 1 \, .
\end{equation}
\end{lem}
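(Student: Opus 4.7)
The approach is a doubling-by-shift argument relying on two basic symmetries of the system \eqref{eq:22n} with $G_j = G_j^\delta$: invariance under the integer shift $(u_j,\xi_j) \mapsto (u_j+k,\xi_j+k)$, $k \in \Z$, and invariance under the spatial translation $(u_j(\tau,y),\xi_j(\tau,y)) \mapsto (u_j(\tau,y+y_0),\xi_j(\tau,y+y_0))$, $y_0 \in \R$. The first follows from the periodicity assumption (A4) on $F_j$ together with the fact that the non-local term $\inf_{y''}(\xi_j(\tau,y'') - p y'') + p y - \xi_j(\tau,y)$ is unchanged when one adds a constant to $\xi_j$. The second follows by a direct computation: translating $\xi_j$ by $y_0$ in the spatial variable shifts the infimum by $p y_0$, which cancels against the shift of $p y - \xi_j(\tau,y)$, so the argument $a$ of $G_j^\delta$ transforms covariantly.

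The main step is then to fix $y' \in \R$, set $k = \lfloor p y' \rfloor$ so that $k \le p y' \le k+1$, and compare three solutions of the same system: the original $(u_j,\xi_j)$, its spatial shift $\tilde u_j(\tau,y) := u_j(\tau,y+y')$, $\tilde \xi_j(\tau,y) := \xi_j(\tau,y+y')$, and its integer translate $\bar u_j(\tau,y) := u_j(\tau,y) + k$, $\bar \xi_j(\tau,y) := \xi_j(\tau,y) + k$ (and likewise with $k+1$). Because the initial datum is linear, $u_0(y) = \xi_0(y) = p y$, the initial values obey
\[
u_j(0,y) + k = p y + p j/n + k \le p y + p j/n + p y' = \tilde u_j(0,y) \le u_j(0,y) + k + 1,
\]
and the same for $\xi_j$.

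I then invoke the comparison principle (Proposition \ref{pro:3}) twice to propagate these two-sided inequalities to all $\tau > 0$. The growth hypothesis \eqref{eq:croissance} is satisfied by each of the three solutions thanks to the barriers of Lemma \ref{lem:1} (which control the time oscillation linearly), and when $\delta > 0$ the condition \eqref{eq:24} for $\tilde \xi_j$ is inherited from $\xi_j$. The resulting bound $k \le u_j(\tau,y+y') - u_j(\tau,y) \le k+1$, combined with $k \le p y' \le k+1$, yields $|u_j(\tau,y+y') - u_j(\tau,y) - p y'| \le 1$, and identically for $\xi_j$.

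The main obstacle I anticipate is the bookkeeping in Step~1, namely checking carefully that the nonlocal argument $a$ and the coupling constraint $u_{j+n}(\tau,y) = u_j(\tau,y+1)$ behave as expected under both symmetries, so that the shifted and integer-translated triples are genuinely solutions of exactly the same system to which Proposition \ref{pro:3} applies. Once that invariance is established, the rest is a direct application of the comparison principle. Note that for $\delta = 0$ the argument is slightly cleaner since $G_j$ does not depend on $a$, so the case $\delta > 0$ can alternatively be recovered by stability in $\delta$ if one prefers to avoid checking \eqref{eq:24} by hand at this stage.
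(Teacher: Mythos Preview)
Your argument is correct and rests on the same ingredients as the paper's proof: spatial translation invariance, invariance under integer shifts (from (A4)), and the comparison principle applied to the linear initial datum $u_0=\xi_0=py$. The paper organizes the computation slightly differently---it first establishes the exact relation $u_j(\tau,y+1/p)=u_j(\tau,y)+1$ by comparing the shift by $1/p$ with the integer translate by $1$, and then combines this periodicity with the monotonicity in $y$ (already known from Proposition~\ref{pro:130}) to bound $u_j(\tau,b)-u_j(\tau,0)\in[0,1]$ for $b\in[0,1/p]$, finally decomposing a general $y'$---whereas you sandwich $u_j(\tau,\cdot+y')$ directly between $u_j+\lfloor py'\rfloor$ and $u_j+\lfloor py'\rfloor+1$ in one step; both routes are equivalent and equally short.
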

\begin{proof}
We have
$$
u_j(0,y+1/p)=\xi_j(0,y+1/p)=\xi_j(0,y) +1 =u_j(0,y) +1 \, .
$$
Therefore from the comparison principle and from the integer periodicity
 of the Hamiltonian (see (A3')), we get that
$$
u_j(\tau,y+1/p)=u_j(\tau,y) +1 \quad {\rm and}\quad \xi_j(\tau,y+1/p)=\xi_j(\tau,y) +1 \, .
$$
Since $u_j(\tau,y)$ is non-decreasing in $y$, we deduce that for all  $b\in [0,1/p]$
$$
0\le u_j(\tau,b)-u_j(\tau,0)\le 1 
$$
Let now $y\in\R$, that we write $py=k+a$ with $k\in\Z$ and $a\in [0,1)$.
Then we have
$$
u_j(\tau,y)-u_j(\tau,0)=k + u_j(\tau,a/p)-u_j(\tau,0)
$$
which implies, for some $b\in [0,1/p)$, 
$$
u_j(\tau,y)-u_j(\tau,0)-py=-a +u_j(\tau,b)-u_j(\tau,0)
$$
and then for all $\tau >0$ and all $y \in \R$, 
$$
|u_j(\tau,y)-u_j(\tau,0)-py|\le 1 \, .
$$
In the same way, we get 
$$
|\xi_j(\tau,y)-\xi_j(\tau,0)-py|\le 1 \, .
$$
Finally, we obtain \eqref{eq:sp-osc} by using the invariance by
translations in $y$ of the problem.
\end{proof}

\noindent {\bf Step 2: estimate on $|u_j(\tau,y)-\xi_j(\tau,y)|$.}
\begin{lem}\label{lem:u-xi}
For  all $j \in \{1, \dots, n\}$ and $0\le \d\le 1$, 
\begin{equation}\label{eq:uj-xij}
\|u_j-\xi_j\|_{L^\infty}\le \frac{C_4}{\alpha_0} 
\end{equation}
where $C_4$ is given by \eqref{eq:C4}. 
\end{lem}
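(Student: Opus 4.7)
The plan is to control $w_j := \xi_j - u_j$ by extracting from \eqref{eq:22n} with $G_j = G_j^\delta$ the identity
\begin{equation*}
(w_j)_\tau = 2F_j(\tau,[u]_{j,m}) + \delta(a_0+a)(\xi_j)_y^+ - 2\alpha_0 w_j
\end{equation*}
(with $a$ as in \eqref{defi:gdelta1}) and then bounding the first two terms on the right by $2C_4$ uniformly in $(\tau,y,j)$. Since $\Phi_+(0) := \sup_{j,y}w_j(0,y)$ vanishes under the choice $u_0 = \xi_0 = py$, a Gronwall-type argument on the ODI $\Phi_+' \le 2C_4 - 2\alpha_0 \Phi_+$ will yield $\Phi_+(\tau) \le C_4/\alpha_0$. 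The symmetric argument applied to $-w_j$ gives $\inf_{j,y} w_j(\tau,y) \ge -C_4/\alpha_0$, which is precisely the statement of the lemma.

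Setting $\Phi_+(\tau) = \sup_{j\in \{1,\dots,n\}, y\in\R} w_j(\tau,y)$, I first observe that $w_j(\tau,\cdot)$ is $1/p$-periodic thanks to the identities $u_j(\tau,y+1/p) = u_j(\tau,y)+1$ and $\xi_j(\tau,y+1/p)=\xi_j(\tau,y)+1$ from \eqref{eq:45}, so the supremum is attained at some $(j_0,y_0)$. At such a maximizer, I estimate the two forcing terms as follows. For $F_{j_0}(\tau, [u]_{j_0,m}(y_0))$, I invoke the integer periodicity from assumption (A4) to subtract $\lfloor u_{j_0}(\tau,y_0)\rfloor$ from every entry, then the Lipschitz assumption (A1). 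The resulting increments $u_{j_0+k}(\tau_0,y_0) - u_{j_0}(\tau_0,y_0)$ for $|k|\le m$ are bounded by combining the $j$-ordering (Proposition~\ref{pro:croissancej-delta}), the $n$-periodicity $u_{j+n}(\tau,y)=u_j(\tau,y+1)$, and the oscillation bound $|u_j(\tau,y+y')-u_j(\tau,y)-py'|\le 1$ from \eqref{eq:45}; this yields $|F_{j_0}(\tau,[u]_{j_0,m}(y_0))| \le \sup_\tau|F(\tau,0,\dots,0)| + L_F(2 + p(m+n))$. For the $\delta$-term I use $|a|\le C_0$, which follows immediately from \eqref{eq:24}, together with the gradient bound $(\xi_j)_y \le p + 2L_F/\delta$ of Proposition~\ref{pro:130}, so that
\begin{equation*}
\delta(a_0+a)(\xi_j)_y^+ \le (\delta p + 2L_F)(a_0+C_0) \le (p+2L_F)(a_0+C_0)
\end{equation*}
since $\delta\le 1$ (the case $\delta=0$ being trivial with the convention $a_0=0$). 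Adding the two contributions gives exactly $2C_4$, the sought pointwise bound on the right-hand side of the $w_j$-equation.

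The main technical obstacle is turning the formal computation of $\Phi_+'$ into a rigorous viscosity argument, since $\xi_j$ is only a viscosity solution. The natural route is a doubling of the time variable on points $(\tau,y,j)$ and $(\tau',y,j)$ realizing the supremum, with penalty $|\tau-\tau'|^2/(2\e)$; no doubling in $y$ is required thanks to attainment via periodicity. A cleaner alternative is to exploit the fact that for each fixed $y$ the first equation of \eqref{eq:22n} is a genuine linear ODE in $\tau$, so that
\begin{equation*}
u_j(\tau,y) = e^{-\alpha_0 \tau}u_0\!\left(y+\tfrac{j}{n}\right) + \alpha_0 \int_0^\tau e^{-\alpha_0(\tau-s)}\xi_j(s,y)\,ds,
\end{equation*}
which reduces the system to a single viscosity inequality for $\xi_j$; the desired bound on $\|w_j\|_\infty$ then follows by comparison of $\xi_j$ with the explicit barrier $u_j+C_4/\alpha_0$, and dually with $u_j-C_4/\alpha_0$. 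Either route leads to $\|u_j-\xi_j\|_\infty \le C_4/\alpha_0$.
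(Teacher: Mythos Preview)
Your proof is correct and follows essentially the same route as the paper: bound the forcing $2F_j(\tau,[u]_{j,m})$ via (A4), (A1), the $j$-ordering and the space-oscillation estimate, bound the $\delta$-term via \eqref{eq:24} and Proposition~\ref{pro:130}, and then run a Gronwall/comparison argument on $w_j=\xi_j-u_j$ to obtain $|w_j|\le C_4/\alpha_0$. The only cosmetic point is that your citations of \eqref{eq:45} should really be to \eqref{eq:sp-osc} and Proposition~\ref{pro:croissancej-delta}, since \eqref{eq:45} is part of the \emph{conclusion} of Proposition~\ref{pro:11} whose proof you are in the middle of; the logical dependencies are nonetheless sound.
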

\begin{proof}
We recall that $((u_j),(\xi_j))$ is solution of 
\begin{equation}\label{eq:borne1}
  \left\{\begin{array}{l}
      (u_j)_\tau= \a_0 (\xi_j-u_j)\\
      (\xi_j)_\tau\le 2 F_j(\tau,[u(\tau,\cdot)]_{j,m})+\a_0(u_j-\xi_j)+\delta (a_0 +C_0) ((\xi_j)_y)^+
\end{array}\right.
\end{equation}
where we have used \eqref{eq:24}. Using Proposition~\ref{pro:130}, we
deduce that (for $\d\le 1$)
\begin{equation}\label{eq:borne2}
\delta (a_0 +C_0) ((\xi_j)_y)^+\le (a_0 +C_0)(p+2 L_F).
\end{equation}
We now want to bound $F_j(\tau,[u(\tau,\cdot)]_{j,m})$. We have
\begin{align}\label{eq:boundF1}
  F_j(\tau,[u(\tau,\cdot)]_{j,m}(y))=&F_j(\tau,[u(\tau,\cdot)
  -\lfloor u_j(\tau,y) \rfloor]_{j,m}(y))\nonumber\\
  \le & L_F+F_j(\tau,[u(\tau,\cdot)- u_j(\tau,y) ]_{j,m}(y))\nonumber\\
  \le & L_F+L_F \sup_{k\in \{0,\dots ,m\}} (u_{j+k}(\tau,y)-
  u_j(\tau,y)) + \sup_\tau F(\tau, 0,\dots 0)
\end{align}
where we have used the periodicity assumption (A4) for the first line,
the Lipschitz regularity of $F$ for the second and third ones, and the
fact that $u_l$ is non-decreasing with respect to $l$ for the third
line. Moreover for all $i\in\{1,\dots, n\}, \; k\in \{0, \dots m\}$ ,
we have that
\begin{align}\label{eq:boundF2}
  0\le u_{i+k}(\tau,y)-u_i(\tau,y)=&u_{i+k-\left \lceil \frac k
      n\right \rceil n}
  (\tau , y+\left \lceil \frac k n\right \rceil n)-u_i(\tau,y)\nonumber\\
  \le & u_{i}(\tau , y+\left\lceil \frac k n\right\rceil
  n)-u_i(\tau,y)
  \nonumber\\
  \le & 1 + p \left\lceil \frac k n\right\rceil n\nonumber\\
  \le & 1+ p (m+n)
\end{align}
where we have used the periodicity of $u_i$ for the first line, the
monotonicity in $i$ of $u_i$ for the second one and the control of the
oscillation \eqref{eq:sp-osc} for the third one. We then deduce that
$$F_j(\tau,[u_j(\tau,\cdot)]_{j,m}(y))\le  L_F( 2+ p (m+n)) + \sup_\tau F(\tau, 0,\dots 0).$$
Combining this inequality with \eqref{eq:borne1} and \eqref{eq:borne2}, we deduce that  
$$
\left\{\begin{array}{l}
(u_j)_\tau = \a_0 (\xi_j-u_j)\\
 (\xi_j)_\tau\le 2 C_4 +\a_0(u_j-\xi_j)
\end{array}\right.
$$
We now define for all $j\in \Z$ $v_j=\xi_j-u_j$. Classical arguments
from viscosity solution theory show that
$$(v_j)_\tau\le 2(C_4-\a_0 v_j).$$
We then deduce that 
$$v_j\le \frac {C_4} {\a_0}.$$
Using the same arguments with super-solution for $\xi_j$, we get the desired result.
\end{proof}

\noindent {\bf Step 3: control of the time oscillations.}

\noindent
We now explain how to control the time oscillations. The proof is
inspired of \cite{IM}. Let us introduce the following continuous
functions defined for $T>0$
\begin{eqnarray*}
\lambda_+^u(T)=\sup_{j\in \{1,\dots,n\}}\sup_{\tau\ge 0}\frac{u_j(\tau+T,0)-u_j(\tau,0)}{T} \\
  \lambda_- ^u(T)=\inf_{j\in \{1,\dots,n\}}\inf_{\tau\ge 0}\frac{u_j(\tau+T,0)-u_j(\tau,0)}{T}
\end{eqnarray*}
and 
\begin{eqnarray*}
\lambda_+^\xi(T)=\sup_{j\in \{1,\dots,n\}}\sup_{\tau\ge 0}\frac{\xi_j(\tau+T,0)-\xi_j(\tau,0)}{T} \\
\lambda_-^\xi(T)=\inf_{j\in \{1,\dots,n\}}\inf_{\tau\ge  0}\frac{\xi_j(\tau+T,0)-\xi_j(\tau,0)}{T}
\end{eqnarray*}
and 
$$
\lambda_+(T)=\sup(\lambda_+^u(T),\lambda_+^\xi(T))\quad
\mbox{and}\quad \lambda_-(T)=\inf( \lambda_- ^u(T), \lambda_-^\xi(T)).
$$ 
In particular, these functions satisfy $-\infty
\le \lambda_-(T)\le \lambda_+(T) \le + \infty$.\medskip

The goal is to prove that $\l_+(T)$ and $\l_-(T)$ have a common limit
as $T\to \infty$. We would like to apply Lemma \ref{lem:ergo}.

In view of the definition of $\lambda^u_+$ and $\lambda^\xi_+$, we see
that $T \mapsto T \lambda^u_+ (T)$ and $T \mapsto T \lambda^\xi_+ (T)$
are sub-additive. Analogously, $T \mapsto -T \lambda^u_-(T)$ and $T
\mapsto -T \lambda^\xi_-(T)$ are also sub-additive. Hence, if we can
prove that these quantities $\lambda^u_\pm(T), \; \lambda^\xi_\pm(T)$
are finite, we will know that they converge. We will then have to
prove that the limits of $\lambda_+$ and $\lambda_-$ are the same.

\noindent{\bf Step 3.1: first control on the time oscillations}

We first prove that $\lambda_\pm$ are finite.
\begin{lem}
For all $T>0$, 
\begin{equation}\label{eq:boundlambda}
-K_1-\frac {C_1}T\le \lambda_-(T)\le \lambda_+(T)\le K_1+\frac {C_1}T
\end{equation}
where $C_1=\frac{C_4}{\alpha_0}+3+2p$ and $K_1$ is defined in \eqref{eq:C}.
\end{lem}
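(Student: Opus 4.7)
The plan is to combine the barrier estimate of Lemma~\ref{lem:1} with the space oscillation bound from Step~1 and the bound $\|u_j-\xi_j\|_\infty\le C_4/\alpha_0$ from Step~2. The case $\tau=0$ is immediate: applied to our solution with initial data $u_{0}(y)=\xi_{0}(y)=py$, Lemma~\ref{lem:1} yields $|u_j(T,0)-pj/n|\le K_1 T$ and the analogous bound for $\xi_j$, which handles the supremum/infimum in the definitions of $\lambda_\pm(T)$ at $\tau=0$.

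For arbitrary $\tau>0$, the idea is to reduce to the case $\tau=0$ by time translation and comparison. Setting $\tilde F_j(s,V):=F_j(s+\tau,V)$, these functions still satisfy (A1)--(A6) with the same constants (1-periodicity in time is preserved by shifting), so Lemma~\ref{lem:1} applies to the shifted system as well. Let $(\tilde u_j,\tilde\xi_j)$ denote its solution starting from linear data $p(y+j/n)$; then $|\tilde u_j(s,y)-p(y+j/n)|\le K_1 s$ and similarly for $\tilde\xi_j$. Meanwhile, $v_j(s,y):=u_j(\tau+s,y)$ and $\eta_j(s,y):=\xi_j(\tau+s,y)$ also solve the shifted system, but with initial data $u_j(\tau,\cdot)$ and $\xi_j(\tau,\cdot)$, which by Step~1 satisfy $u_j(\tau,y)\le py+u_j(\tau,0)+1$ and analogously for $\xi_j$.

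The key technical point is to exhibit an integer $k^*$ such that $p(y+j/n)+k^*\ge\max(u_j(\tau,y),\xi_j(\tau,y))$ for every $j\in\{1,\dots,n\}$ and $y\in\R$. Introducing $c_j:=\max\{u_j(\tau,0),\xi_j(\tau,0)\}-pj/n$, one uses the ordering of particles (Proposition~\ref{pro:croissancej-delta}), the periodicity relation $u_{j+n}(\tau,y)=u_j(\tau,y+1)$, and Step~2 to prove $|c_{j'}-c_j|\le p+1+C_4/\alpha_0$ for $j,j'\in\{1,\dots,n\}$. Taking $k^*:=\lceil\max_{j'} c_{j'}\rceil+1$ enforces the domination, with $k^*-c_j\le p+3+C_4/\alpha_0$. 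Since the shifted system is invariant under simultaneous addition of an integer to all components (assumption (A4') for $\tilde F_j$), the family $(\tilde u_j+k^*,\tilde\xi_j+k^*)$ is again a solution, and the comparison principle (Proposition~\ref{pro:3}) yields $v_j\le\tilde u_j+k^*$ and $\eta_j\le\tilde\xi_j+k^*$ pointwise.

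Evaluating at $s=T$, $y=0$ gives $u_j(\tau+T,0)-u_j(\tau,0)\le K_1 T+k^*-(u_j(\tau,0)-pj/n)\le K_1 T+(p+3+C_4/\alpha_0)+(c_j-(u_j(\tau,0)-pj/n))$, where the last parenthesis is at most $C_4/\alpha_0$ by Step~2; one obtains $K_1 T+C_1$ with $C_1=C_4/\alpha_0+3+2p$. The symmetric lower bound is produced by swapping sub- and super-solutions, and the identical argument for $\xi_j$ completes the estimate $-K_1-C_1/T\le\lambda_-(T)\le\lambda_+(T)\le K_1+C_1/T$. The hard part is the simultaneous control of $u_j$ and $\xi_j$ via a single integer $k^*$: the small-but-nonzero gap between the two families, which can only be bounded by $C_4/\alpha_0$, is precisely what forces that contribution in $C_1$.
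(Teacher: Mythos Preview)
Your approach is essentially the paper's: sandwich the solution at time $\tau$ between integer shifts of the linear profile $p(y+j/n)$ using the space-oscillation bound, the $\|u_j-\xi_j\|_\infty\le C_4/\alpha_0$ estimate, and the monotonicity/periodicity in $j$, then invoke the barriers of Lemma~\ref{lem:1} and comparison. The paper does this directly on $[\tau,\tau+t)$ without your explicit time shift (the barrier inequality is pointwise in $\tau$, so no shift is needed), and it packages the infimum as $\Delta=\inf_j\min(u_j(\tau,0),\xi_j(\tau,0))$ rather than your $c_j$ and $k^*$, but the mechanics are identical. One arithmetic slip: your chain yields $k^*-(u_j(\tau,0)-pj/n)\le (p+3+C_4/\alpha_0)+C_4/\alpha_0=p+3+2C_4/\alpha_0$, not the stated $C_1=2p+3+C_4/\alpha_0$; this does not affect the argument but gives a different explicit constant than the one in the lemma.
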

\begin{proof}
Consider $j \in \{1, \dots, n\}$. 
Using the control of the space oscillations \eqref{eq:sp-osc}, we get that
$$
u_j(\tau,y)\ge \D +py-1\quad
\mbox{and}\quad \xi_j(\tau,y)\ge  \D +py-1
$$
where
$$
\D=\inf_{j\in\{1,\dots,n\}}\inf (u_j(\tau,0), \xi_j(\tau,0)) \, .
$$
Recalling (see Lemma \ref{lem:1}) that $\lfloor\D-p\rfloor+p(y+ \frac
{j}n)-1-K_1t$ is a sub-solution and using the comparison principle on
the time interval $[\tau,\tau+t)$, we deduce that
\begin{equation}\label{eq:time-osc1}
u_j(\tau+t,y)\ge \lfloor\D-p\rfloor+py+ \frac {pj}n-1-K_1t\quad
\mbox{and}\quad \xi_j(\tau+t,y)\ge  \lfloor\D-p\rfloor+py+ \frac {pj}n-1-K_1t \, .
\end{equation}
We now want to estimate $\D$ from below. Let us assume that the
infimum in $\D$ is reached for the index $\bar j \in \{1,\dots, n\}$. 
Then $\bar j \ge j -n$ since $j \in \{1, \dots, n\}$.
We then deduce that
\begin{align*}
p+ \lfloor\D-p\rfloor\ge&\D-1\\
\ge & u_{\bar j}(\tau, 0)-\frac{C_4}{\alpha_0}-1\\
\ge & u_{j-n}(\tau,0)-\frac{C_4}{\alpha_0}-1\\
\ge & u_{j}(\tau,-1)-\frac{C_4}{\alpha_0}-1\\
\ge &u_{j}(\tau,0)-\frac{C_4}{\alpha_0}-2-p
\end{align*}
where we have used \eqref{eq:uj-xij} for the second line, the fact
that $(u_j)_j$ is non-decreasing in $j$ for the third line, the
periodicity of $u_j$ for the fourth line and \eqref{eq:sp-osc} for the
last one.  In the same way, we get that
$$
p+ \lfloor\D-p\rfloor\ge\xi_{j}(\tau,0)-\frac{C_4}{\alpha_0}-2-p.
$$
Injecting this in \eqref{eq:time-osc1}, we get that
\begin{equation}\label{eq:time-osc3}
u_j(\tau+t,y)\ge u_{j}(\tau,0)-C_1 +py -K_1t
\end{equation}
and
$$
\xi_j(\tau+t,y)\ge  \xi_{j}(\tau,0)-C_1+py-K_1t.
$$
In the same way, we also get
\begin{equation}\label{eq:time-osc4}
u_j(\tau+t,y)\le u_{j}(\tau,0)+ C_1 +py+K_1t 
\end{equation}
and
$$
\xi_j(\tau+t,y)\le  \xi_{j}(\tau,0)+C_1+py+K_1t.
$$
Taking $y=0$, we finally get \eqref{eq:boundlambda}.
\end{proof}

\noindent{\bf Step 3.2: Refined control on the time oscillations}\\
We now estimate $\lambda_+ - \lambda_-$ in order to prove that they
have the same limit.
\begin{lem}\label{lem:lambda+-lambda-}
For all $T>0$,
$$
|\lambda_+(T)-\lambda_-(T)|\le \frac {C_2}T 
$$
where $C_2=6+\frac{4C_4}{\alpha_0}+3p+2C_1+2K_1$.
\end{lem}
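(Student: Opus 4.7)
My plan is to prove Lemma \ref{lem:lambda+-lambda-} by the standard ergodic comparison strategy. For any two instants $\tau_1,\tau_2 \ge 0$ and indices $j_1,j_2 \in \{1,\dots,n\}$, I want to bound the quantity
\[
\Delta := \bigl[u_{j_1}(\tau_1+T,0)-u_{j_1}(\tau_1,0)\bigr]-\bigl[u_{j_2}(\tau_2+T,0)-u_{j_2}(\tau_2,0)\bigr],
\]
together with the three analogous expressions obtained by replacing either $u_{j_i}$ by $\xi_{j_i}$, by $C_2$. Dividing by $T$ and taking the suprema over $\tau_i,j_i$ then yields the lemma, since $\lambda_+(T)=\max(\lambda_+^u(T),\lambda_+^\xi(T))$ and $\lambda_-(T)=\min(\lambda_-^u(T),\lambda_-^\xi(T))$.

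The first reduction is to integer times. The barriers from Lemma \ref{lem:1} (and the shift-invariance in time of the equation inside each unit interval) show that each $u_j(\tau,0)$ moves by at most $K_1$ when $\tau$ is rounded to $\lfloor\tau\rfloor$, and similarly for $\xi_j$; this replacement costs $2K_1$ in $C_2$. Assume now that $\tau_1,\tau_2\in\N$, and introduce the shifted pairs
\[
w^{(i)}_j(\tau,y):=u_j(\tau+\tau_i,y),\qquad \eta^{(i)}_j(\tau,y):=\xi_j(\tau+\tau_i,y),\quad i=1,2.
\]
Thanks to the time-periodicity assumption (A4) and to $\tau_i\in\N$, both $((w^{(i)}_j)_j,(\eta^{(i)}_j)_j)$ are viscosity solutions of the same system \eqref{eq:22n} with $G_j=G_j^\delta$ from \eqref{defi:gdelta1}.

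Next I would compare the initial data at $\tau=0$ uniformly in $(j,y)$. Using the space-oscillation estimate \eqref{eq:sp-osc}, the monotonicity of $u_j$ and $\xi_j$ in $j$ (Proposition \ref{pro:croissancej-delta}) together with the $n$-periodicity $u_{j+n}(\tau,y)=u_j(\tau,y+1)$ (which gives $|u_j(\tau,0)-u_{j^\star}(\tau,0)|\le p+1$ for all $j,j^\star\in\{1,\dots,n\}$), and Lemma \ref{lem:u-xi}, one obtains for any reference index $j^\star\in\{1,\dots,n\}$ a bound of the form
\[
\sup_{j\in\Z,\,y\in\R}\bigl[w^{(1)}_j(0,y)-w^{(2)}_j(0,y)\bigr] \le u_{j^\star}(\tau_1,0)-u_{j^\star}(\tau_2,0) + 2(p+2),
\]
and the corresponding inequalities where $w$ is replaced by $\eta$, or where one $w$ is replaced by an $\eta$; each such crossing between the $u$ and the $\xi$ components costs an additional $C_4/\alpha_0$ by Lemma \ref{lem:u-xi}, which is the source of the $4C_4/\alpha_0$ in $C_2$. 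Taking $k$ to be the integer ceiling of the right-hand side and using the integer shift invariance of \eqref{eq:22n} provided by (A4), the functions $w^{(2)}_j+k$ and $\eta^{(2)}_j+k$ form a new solution of the same system dominating the initial data of $(w^{(1)},\eta^{(1)})$. The comparison principle (Proposition \ref{pro:3}, applied jointly to $u$ and $\xi$) then propagates the inequality to all $\tau\ge 0$.

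Finally I would evaluate the propagated inequality at $\tau=T$, $y=0$, and $j=j_1$; a further use of monotonicity in $j$ (costing $p+1$) lets one replace $u_{j_1}$ by $u_{j_2}$ on the right, and the already-established time-oscillation bound \eqref{eq:boundlambda} from Step 3.1 lets one absorb the differences $u_{j^\star}(\tau_i,0)$ into the $(\tau_i,j_i)$-language of $\Delta$, contributing the $2C_1$ term. Running the same argument in the four variants (pure-$u$, pure-$\xi$, and the two mixed cases) and taking the worst constant, one obtains $\Delta\le C_2$ with $C_2=6+4C_4/\alpha_0+3p+2C_1+2K_1$. The main obstacle is the bookkeeping: the constants $6$, $3p$, $4C_4/\alpha_0$, $2C_1$, $2K_1$ each arise from a distinct step (the space oscillation and $j$-monotonicity estimates, the $u\leftrightarrow\xi$ conversions, the prior time-oscillation bound, and the reduction to integer times), and it is a matter of performing them in an order that uniformly bounds all four variants.
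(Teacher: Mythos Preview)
Your overall strategy is essentially the paper's: shift the two solutions in time so that they solve the \emph{same} (time-periodic) equation, compare their initial data uniformly in $(j,y)$ using the space-oscillation bound \eqref{eq:sp-osc}, the $j$-monotonicity, and Lemma~\ref{lem:u-xi}, add an integer to apply the comparison principle, and finally undo the fractional time shifts. So the approach is correct in spirit and matches the paper closely.

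There is, however, a genuine bookkeeping gap in your first reduction. You claim that ``the barriers from Lemma~\ref{lem:1}'' show $|u_j(\tau,0)-u_j(\lfloor\tau\rfloor,0)|\le K_1$. Lemma~\ref{lem:1} only provides barriers of the form $u_0(y+\tfrac{j}{n})\pm K_1\tau$ for the \emph{specific} initial datum $u_0$; at time $\lfloor\tau\rfloor$ the data $(u_j(\lfloor\tau\rfloor,\cdot),\xi_j(\lfloor\tau\rfloor,\cdot))$ is no longer of this form, so you cannot restart the barrier with the same constant $K_1$. The correct estimate is \eqref{eq:time-osc3}--\eqref{eq:time-osc4} from Step~3.1, which give $|u_j(\sigma+t,0)-u_j(\sigma,0)|\le C_1+K_1 t$, hence a cost $C_1+K_1$ (not $K_1$) per fractional shift. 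If you round \emph{both} $\tau_1$ and $\tau_2$ as you propose, you incur this cost four times (at $\tau_1$, $\tau_1+T$, $\tau_2$, $\tau_2+T$) and overshoot $C_2$. The paper avoids this by introducing a single $\beta\in[0,1)$ with $\tau^+-\tau^--\beta\in\Z$, so that only \emph{two} applications of \eqref{eq:time-osc3}--\eqref{eq:time-osc4} (at $\tau^-$ and $\tau^-+T$) are needed; this is precisely the origin of the $2C_1+2K_1$ in $C_2$. Your separate attribution of $2K_1$ to ``rounding'' and $2C_1$ to ``\eqref{eq:boundlambda}'' does not correspond to an actual argument; \eqref{eq:boundlambda} is not what is used here. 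Once you replace your rounding step by the paper's single-$\beta$ reduction (or, equivalently, round only the \emph{difference} $\tau_1-\tau_2$), the rest of your plan goes through with the stated constant.
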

\begin{proof}
By definition of $\lambda_\pm(T)$, for all $\e>0$, there exists
$\tau^\pm\ge 0$ and $v^\pm \in \{u_1,\dots u_n, \xi_1,\dots \xi_n\}$
such that
$$
\left|\lambda_\pm(T)-\frac{v^\pm(\tau^\pm+T,0)-v^\pm(\tau^\pm,0)}{T}\right|\le \e.
$$
Consider $j \in \{1,\dots, n\}$. 
We choose $\beta\in [0,1)$ such that $\tau^+-\tau^--\beta=k\in \Z$ and we set
$$
\Delta^u_j=u_j(\tau^+,0)-u_j(\tau^-+\beta,0), \quad 
\Delta^\xi_j=\xi_j(\tau^+,0)-\xi_j(\tau^-+\beta,0)
$$ 
and
$$
\Delta=\sup_{j\in\{1,\dots, n\}}\sup(\Delta^u_j,\Delta^\xi_j).
$$
Using \eqref{eq:sp-osc}, we get that
$$
u_j(\tau^+,y)\le u_j(\tau^-+\beta,y)+2+\lceil\Delta\rceil
\quad{\rm and}\quad \xi_j(\tau^+,y)\le \xi_j(\tau^-+\beta,y)+2+\lceil\Delta\rceil \, .
$$
Using the comparison principle, we then deduce that
\begin{equation}\label{eq:time-osc2}
u_j(\tau^++T,y)\le u_j(\tau^-+\beta+T,y)+2+\lceil\Delta\rceil
\quad{\rm and}\quad \xi_j(\tau^++T,y)\le \xi_j(\tau^-+\beta+T,y)+2+\lceil\Delta\rceil.
\end{equation}
We now want to estimate $\lceil\Delta\rceil$ from above. Let us assume
that the maximum in $\Delta$ is reached for the index $\bar j$. We
then have for all $j\in \{1,\dots, n\}$
\begin{align*}
\lceil\Delta\rceil\le& u_{\bar j}(\tau^+,0)-u_{\bar j}(\tau^-+\beta,0)+\frac{2C_4}{\alpha_0}+1\\
\le&u_{j+n}(\tau^+,0)-u_{j-n}(\tau^-+\beta,0)+\frac{2C_4}{\alpha_0}+1\\
\le&u_{j}(\tau^+,1)-u_{j}(\tau^-+\beta,-1)+\frac{2C_4}{\alpha_0}+1\\
\le&u_{j}(\tau^+,0)-u_{j}(\tau^-+\beta,0)+\frac{2C_4}{\alpha_0}+3+2p
\end{align*}
where we have used \eqref{eq:uj-xij} for the first line, the fact that
$(u_j)_j$ is non-decreasing in $j$ for the second line, the
periodicity of $u_j$ for the third line and \eqref{eq:sp-osc} for the
last one. In the same way, we also get
$$
\lceil\Delta\rceil\le\xi_{j}(\tau^+,0)-\xi_{j}(\tau^-+\beta,0)+\frac{2C_4}{\alpha_0}+3+2p \, .
$$
Injecting this in \eqref{eq:time-osc2}, we get
$$u_j(\tau^++T,y)\le u_j(\tau^-+\beta+T,y)+5+\frac{2C_4}{\alpha_0}+2p+\Delta_j^u$$
and 
$$
\xi_j(\tau^++T,y)\le \xi_j(\tau^-+\beta+T,y)+5+\frac{2C_4}{\alpha_0}+2p+\Delta_j^\xi \, .
$$
Taking $y=0$ and using \eqref{eq:time-osc3} (with $\tau=\tau^-$ and
$t=\beta $) and \eqref{eq:time-osc4} (with $\tau=\tau^-+T$ and
$t=\beta$), we get
$$
u_j(\tau^++T,0)-u_j(\tau^+,0)\le u_j(\tau^-+T,0)-u_j(\tau^-,0)+5+\frac{2C_4}{\alpha_0}+2p+2C_1+2K_1 \, .
$$
In the same way, we get
$$
\xi_j(\tau^++T,0)-\xi_j(\tau^+,0)\le
\xi_j(\tau^-+T,0)-\xi_j(\tau^-,0)+5+\frac{2C_4}{\alpha_0}+2p+2C_1+2K_1 \, .
$$
Using also \eqref{eq:uj-xij}, \eqref{eq:sp-osc} and the fact that
$(u_j)_j$ and $(\xi_j)_j$ are non-decreasing in $j$, we finally get
$$
v^+(\tau^++T,0)-v^+(\tau^+,0)\le v^-(\tau^-+T,0)-v^-(\tau^-,0)+C_2 \, .
$$
The comparison of $u_j$ and $\xi_j$ makes appear the additional constant $2C_4/\alpha_0$, 
and the comparison between $u_j$ and $u_k$ (and similarly between
$\xi_j$ and $\xi_k$) creates an additional constant $1+p$.
Indeed, we have
$$
u_j (\tau,0)-u_k(\tau,0) = u_{j+n} (\tau,1) -u_k(\tau,0)\le u_{j+n}
(\tau,0) -u_k(\tau,0)+1+p\le 1+p .
$$ 
This explains the value of the new constant $C_2$.

This implies that
$$
T\lambda_+(T)\le T\lambda_-(T)+2\e+C_2 \, .
$$
Since this is true for all $\e>0$, the proof of the lemma is complete. 
\end{proof}

\noindent{\bf Step 3.3: Conclusion}\\
We now can conclude that
$\lim_{T\to +\infty } \lambda_\pm (T)$ are equal. If $\lambda$ denotes the common limit, we also have, by Lemma \ref{lem:ergo}, that for every $T>0$,
$$\lambda_-(T)\le \lambda \le \lambda_+(T).$$

Moreover, by Lemma \ref{lem:lambda+-lambda-}, we have
$$\lambda_+(T)\le \lambda_-(T)+\frac {C_2} {T}$$
and so
$$\lambda_-(T)\le \lambda \le\lambda_-(T)+\frac {C_2} {T}$$
We finally deduce (using a similar argument for $\lambda_+$) that
$$
|\lambda_\pm (T) -\lambda | \le \frac{C_2}T.
$$
Combining this estimate and \eqref{eq:sp-osc}, we get with $T=\tau$
$$
|u_j(\tau,y)-u_j(0,0)-py-\lambda \tau|\le C_2+1
$$
and 
$$
|\xi_j(\tau,y)-\xi_j(0,0)-py-\lambda \tau|\le C_2+1 \, .
$$
This  finally implies \eqref{eq:41} with $C_3=C_2+1$. 
 \end{proof}

 \subsection{Construction of hull functions for general Hamiltonians}
\label{subsec:hull}

In this subsection, we construct hull functions for a general
Hamiltonian $G_j$. As we shall see, this is a straightforward
consequence of the construction of time-space periodic solutions of
\eqref{eq:121}; see Proposition~\ref{pro:122} and
Corollary~\ref{pro:122bis} below.  We will then prove that the time
slope obtained in Proposition~\ref{pro:11} is unique and that the map
$p \mapsto \lambda$ is continuous; see Proposition~\ref{pro:129}
below.  \medskip

Given $p >0$, we consider the equation in $\R \times \R$
\begin{equation}\label{eq:121}
\left\{\begin{array}{l}
\left\{\begin{array}{l}
(u_j)_\tau=\alpha_0 (\xi_j-u_j)\\
(\xi_j)_\tau=
G_j(\tau,[u(\tau,\cdot)]_{j,m},\xi_j,\inf_{y'\in\R}\left(\xi_j(\tau,y')-py'\right)+py-\xi_j(\tau,y),(\xi_j)_y)\\
\end{array}\right.    
\\ \\
\left\{\begin{array}{l}
u_{j+n}(\tau,y)=u_j(\tau,y+1)\\
\xi_{j+n}(\tau,y)=\xi_j(\tau,y+1) \, ,
\end{array}\right.    
\end{array}\right.    
\end{equation}
where $G_j=G_j^\d$ is given in \eqref{defi:gdelta1} for $\d\ge 0$.
Then we have the following result
\begin{pro}\label{pro:122}{\bf (Existence of time-space periodic
    solutions of \eqref{eq:121})}\\
  Let $0\le \delta\le 1$, $a_0\in \R$ and $p>0$. Assume {\rm
    (A1)-(A6)}. Then there exist functions $((u_j^\infty)_j,
  (\xi_j^\infty)_j)$ solving \eqref{eq:121} on $\R\times \R$ and a real
  number $\lambda \in \R$ satisfying for all $\tau, y \in \R$, $j\in
  \{1,\dots,n\}$
\begin{eqnarray}
\label{eq:125}
|u_j^\infty(\tau,y)-py-\lambda \tau|&\le& 2\lceil C_3\rceil   \\
\nonumber |\xi_j^\infty(\tau,y)-py-\lambda \tau|&\le& 2\lceil C_3\rceil \, .  
\end{eqnarray}
Moreover $((u_j^\infty)_j,(\xi_j^\infty)_j)$ satisfies for $j\in \{1,\dots, n\}$
\begin{equation}\label{eq:123}
\left\{\begin{array}{l}
u_j^\infty(\tau,y+1/p)=u_j^\infty(\tau,y)+1\\
u_j^\infty(\tau+1,y)=u_j^\infty(\tau,y+\lambda/p)\\
(u_j^\infty)_y(\tau,y)\ge 0\\
u_{j+1}(\tau,y)\ge u_j(\tau,y)
\, .
\end{array}\right.
\quad 
\left\{\begin{array}{l}
\xi_j^\infty(\tau,y+1/p)=\xi_j^\infty(\tau,y)+1\\
\xi_j^\infty(\tau+1,y)=\xi_j^\infty(\tau,y+\lambda/p)\\
(\xi_j^\infty)_y(\tau,y)\ge 0\\
\xi_{j+1}(\tau,y)\ge \xi_j(\tau,y)
\, .
\end{array}\right.
\end{equation}
Eventually, when the Hamiltonians $G_j$ are independent on $\tau$, we can choose $u_j^\infty$ and $\xi_j^\infty$
independent on $\tau$.
\end{pro}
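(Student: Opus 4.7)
The plan is to construct $(u_j^\infty,\xi_j^\infty)_j$ as a half-relaxed limit of time-shifted copies of the Cauchy solution from Proposition~\ref{pro:11}. Let $(u_j,\xi_j)_j$ be the solution of \eqref{eq:22n}--\eqref{eq:22n-ic} with $G_j = G_j^\delta$ and initial datum $u_0(y) = \xi_0(y) = py$, and let $\lambda$ be the associated rotation number. For each integer $k \ge 0$, I set
\begin{equation*}
v_j^k(\tau,y) = u_j(\tau+k,\, y - k\lambda/p),\qquad \chi_j^k(\tau,y) = \xi_j(\tau+k,\, y - k\lambda/p),
\end{equation*}
defined on $\{\tau \ge -k\}\times\R$. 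The $1$-periodicity of $G_j^\delta$ in $\tau$ (Assumption (A4')) and the translation invariance in $y$ of \eqref{eq:22n}---the nonlocal infimum is only shifted by a constant under $y\mapsto y+a$---ensure that each $(v_j^k,\chi_j^k)_j$ remains a viscosity solution of \eqref{eq:22n}. Moreover, Proposition~\ref{pro:11} yields the uniform bounds
\begin{equation*}
|v_j^k(\tau,y) - py - \lambda\tau| \le C_3,\qquad |\chi_j^k(\tau,y) - py - \lambda\tau| \le C_3,
\end{equation*}
and transfers the spatial properties of \eqref{eq:45} to $(v_j^k,\chi_j^k)$.

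\smallskip

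Next I take $\overline{u}_j^\infty, \underline{u}_j^\infty, \overline{\xi}_j^\infty, \underline{\xi}_j^\infty$ to be the upper and lower half-relaxed limits of $(v_j^k)$ and $(\chi_j^k)$ as $k\to\infty$; these are defined on all of $\R\times\R$ since for any fixed $\tau$ the shift $\tau+k$ is eventually positive. By the stability of viscosity solutions under half-relaxed limits---for which the a priori control $\inf_{y'}(\chi_j^k(\tau,y') - py') \in [\lambda\tau - C_3,\lambda\tau + C_3]$ together with the monotonicity of $G_j^\delta$ in the nonlocal argument $a$ given by (A3') plays a crucial role---the pair $(\overline{u}_j^\infty,\overline{\xi}_j^\infty)$ is a subsolution and $(\underline{u}_j^\infty,\underline{\xi}_j^\infty)$ a supersolution of \eqref{eq:121} on $\R\times\R$. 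The bound \eqref{eq:125} holds with constant $2\lceil C_3\rceil$ (the factor $2$ absorbs the envelope procedure), and the key time-space periodicity in \eqref{eq:123} is obtained by passing to the envelope in the algebraic identity
\begin{equation*}
v_j^k(\tau+1,y) = v_j^{k+1}(\tau,\, y+\lambda/p),\qquad v_j^k(\tau, y+1/p) = v_j^k(\tau,y) + 1,
\end{equation*}
observing that shifting the index $k$ by $1$ leaves the half-relaxed limits unchanged. The monotonicities in $y$ and $j$ are inherited from Propositions~\ref{pro:130} and~\ref{pro:croissancej-delta}.

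\smallskip

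To upgrade from a sub/super-solution pair to a genuine viscosity solution of \eqref{eq:121}, I apply Perron's method between $(\underline{u}_j^\infty,\underline{\xi}_j^\infty)$ and $(\overline{u}_j^\infty,\overline{\xi}_j^\infty)$ within the class of functions obeying \eqref{eq:123}; the local comparison principle of Proposition~\ref{pro:pc} together with the uniform bound \eqref{eq:125} makes this construction well posed, and the class of functions satisfying both periodicities is stable under pointwise sup and lower semicontinuous envelope, so the Perron candidate inherits \eqref{eq:123} and \eqref{eq:125}. The main obstacle I anticipate is establishing the stability of the nonlocal infimum term through the half-relaxed limit: this requires the uniform oscillation control \eqref{eq:45} combined with the one-sided monotonicity of $G_j^\delta$ in $a$, since pointwise half-relaxed convergence does not automatically commute with $\inf_{y'}$. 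In the autonomous case, where $G_j$ has no $\tau$-dependence, the Cauchy flow of \eqref{eq:121} is invariant under continuous time translations, so any continuous time-translate of a solution remains a solution; averaging $(v_j^k,\chi_j^k)$ over $\tau \in [0,1]$ in the spirit of Krylov--Bogoliubov, or equivalently looking for a stationary point of the flow modulo the $\lambda$-drift, yields a $\tau$-independent solution, for which the first equation of \eqref{eq:121} forces $\xi_j^\infty \equiv u_j^\infty$ and reduces the problem to a stationary elliptic equation for $u_j^\infty$.
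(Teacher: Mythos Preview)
Your approach is essentially the paper's: shift the Cauchy solution in time, take half-relaxed limits to get global sub/super-solutions, then use Perron's method. Your choice of the spatial shift $y\mapsto y-k\lambda/p$ in place of the paper's value shift $u\mapsto u-\lfloor\lambda m\rfloor$ is a harmless variant---indeed it makes the identity $v_j^k(\tau+1,y)=v_j^{k+1}(\tau,y+\lambda/p)$ cleaner and yields the time--space periodicity of the envelopes directly, whereas the paper needs an extra $\sup/\inf$ over integer translates $(k,l)$ afterwards.

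There are, however, three places where your write-up is thinner than what the argument actually requires. First, your justification that the Perron solution inherits \eqref{eq:123} (``the class of periodic functions is stable under sup and lsc envelope'') is not how Perron works: the bump construction that turns a strict super-solution into a smaller one is local and need not respect the periodicities. The paper instead runs Perron without constraint, obtains the \emph{least} super-solution above the sub-solution, and then shows a posteriori that the infima $\tilde u_j^\infty(\tau,y)=(\inf_{k,l}u_j^\infty(\tau+k,y-k\lambda/p+l/p)-l)_*$ and $\hat u_j^\infty=(\inf_{b\ge0}u_j^\infty(\tau,y+b))_*$ are again super-solutions above the sub-solution, hence equal to $u_j^\infty$ by minimality. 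Second, the monotonicity in $j$ does \emph{not} simply ``pass through'' Perron: the paper devotes a separate argument to $\check u_j^\infty=(\inf_{k\ge0}u_{j+k}^\infty)_*$, where showing the super-solution property genuinely uses Assumption~(A6'); you should flag that step. Third, for the autonomous case the paper's device is much simpler than Krylov--Bogoliubov: one just repeats the whole construction with $k\in\R$ instead of $k\in\Z$, which forces $u_j^\infty(\tau+s,y)=u_j^\infty(\tau,y+s\lambda/p)$ for all real $s$, i.e.\ the associated hull function $h_j$ is $\tau$-independent. Your final remark that the first equation of \eqref{eq:121} then forces $\xi_j^\infty\equiv u_j^\infty$ is misleading: it is $h_j$, not $u_j^\infty(\tau,y)=h_j(py+\lambda\tau)$, that becomes $\tau$-independent, and one gets $g_j=h_j+\lambda(h_j)_z/\alpha_0$ rather than $g_j=h_j$.
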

By considering for all $\tau,z \in \R$ 
\begin{equation}\label{eq:h}
\left\{\begin{array}{l}
h_j(\tau,z)=u_j^\infty(\tau,(z-\lambda\tau)/p)\quad {\rm if}\; j\in\{1,\dots,n\}\\
h_{j+n}(\tau,z)=h_j(\tau,z+p)\quad {\rm otherwise}
\end{array}\right.
\end{equation}
and for all $\tau,z \in \R$, 
\begin{equation}\label{eq:g}\left\{\begin{array}{l}
g_j(\tau,z)=\xi_j^\infty(\tau,(z-\lambda\tau)/p)\quad {\rm if}\; j\in\{1,\dots,n\}\\
g_{j+n}(\tau,z)=g_j(\tau,z+p)\quad {\rm otherwise}
\end{array}\right.
\end{equation}
 we immediately get the  following corollary
\begin{cor} \label{pro:122bis} {\bf (Existence of hull functions)}\\
Assume {\rm (A1)-(A6)}. There exists a hull function $((h_j)_j, (g_j)_j)$ in the sense
of Definition~\ref{defi:1n} satisfying 
$$
| h_j(\tau,z) - z |\le  2 \lceil C_3\rceil
$$
and 
$$
| g_j(\tau,z) - z |\le  2 \lceil C_3\rceil
$$
\end{cor}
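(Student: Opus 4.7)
The idea is to obtain the hull function from the time-space periodic solution produced by Proposition \ref{pro:122} through the change of variables $z = py + \lambda\tau$, which removes the linear drift $\lambda\tau$ in time. My plan is thus to apply Proposition \ref{pro:122} with $\delta = 0$ and $a_0 = 0$, so that $G_j$ reduces to $2F_j(\tau,V) + \alpha_0(V_0 - r)$ --- exactly the nonlinearity appearing in the $g_j$-equation of Definition \ref{defi:1n} --- and obtain both the slope $\lambda$ and the solution $((u_j^\infty)_j,(\xi_j^\infty)_j)$ with the properties \eqref{eq:125}--\eqref{eq:123}. I then define $h_j$ and $g_j$ by the formulas \eqref{eq:h} and \eqref{eq:g}, and the whole task reduces to checking item by item that \eqref{eq:10n} holds together with the stated bound.

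The first step is to verify the two PDEs via the chain rule. For $j\in\{1,\dots,n\}$, writing $y = (z-\lambda\tau)/p$, one has $(h_j)_\tau + \lambda(h_j)_z = (u_j^\infty)_\tau(\tau,y) = \alpha_0(\xi_j^\infty - u_j^\infty)(\tau,y) = \alpha_0(g_j-h_j)(\tau,z)$, and similarly $(g_j)_\tau + \lambda(g_j)_z = (\xi_j^\infty)_\tau(\tau,y)$, which after using $\delta = 0$ becomes $2F_j(\tau,[u^\infty(\tau,\cdot)]_{j,m}(y)) + \alpha_0(h_j - g_j)$. The one slightly delicate point is identifying $[u^\infty(\tau,\cdot)]_{j,m}(y)$ with $[h(\tau,\cdot)]_{j,m}(z)$: this uses the index-periodicity $u_{j+n}^\infty(\tau,y) = u_j^\infty(\tau,y+1)$ from \eqref{eq:121} together with the second line of \eqref{eq:h}, and is really a bookkeeping check. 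For $j\notin\{1,\dots,n\}$ the PDEs then extend automatically by the $j$-periodicity built into \eqref{eq:h}, \eqref{eq:g}.

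The remaining items of \eqref{eq:10n} are direct translations of \eqref{eq:123}. Space periodicity $h_j(\tau,z+1) = h_j(\tau,z)+1$ comes from the first line of \eqref{eq:123}; time periodicity $h_j(\tau+1,z) = h_j(\tau,z)$ is exactly why the identity $u_j^\infty(\tau+1,y) = u_j^\infty(\tau,y+\lambda/p)$ was cooked up, since the $+\lambda/p$ shift in $y$ cancels the $-\lambda/p$ shift coming from replacing $\tau$ by $\tau+1$ in the argument $(z-\lambda\tau)/p$. The relation $h_{j+n}(\tau,z) = h_j(\tau,z+p)$ is built into the definition, and the monotonicities $(h_j)_z \ge 0$ and $h_{j+1} \ge h_j$ follow from the corresponding properties of $u_j^\infty$ in \eqref{eq:123} (extended to all $j\in\Z$ by Proposition \ref{pro:croissancej-delta} together with the index-periodicity). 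The analogous checks for $g_j$ are identical. Finally, for $j\in\{1,\dots,n\}$, the identity $h_j(\tau,z) - z = u_j^\infty(\tau,y) - py - \lambda\tau$ combined with \eqref{eq:125} yields $|h_j(\tau,z)-z| \le 2\lceil C_3\rceil$, and the same for $g_j$. The only genuine obstacle in the whole argument --- and it is quite mild --- is the identification of the nonlocal argument of $F_j$ under the change of variables, which must be carried out carefully so that the $g_j$-equation takes exactly the form prescribed by Definition \ref{defi:1n}; everything else is an immediate consequence of \eqref{eq:123} or a one-line chain-rule computation.
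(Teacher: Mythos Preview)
Your proof is correct and follows exactly the approach the paper intends: the corollary is stated immediately after the change-of-variables formulas \eqref{eq:h}--\eqref{eq:g}, and the paper treats it as an ``immediate'' consequence of Proposition~\ref{pro:122} (applied with $\delta=0$). You have simply written out the verifications that the paper leaves implicit, including the identification of $[u^\infty(\tau,\cdot)]_{j,m}(y)$ with $[h(\tau,\cdot)]_{j,m}(z)$ via the index-periodicity; one minor remark is that the monotonicity $h_{j+1}\ge h_j$ already follows directly from the last line of \eqref{eq:123} (stated for $j\in\{1,\dots,n\}$, hence in particular for $j=n$) together with the built-in periodicity of \eqref{eq:h}, so the appeal to Proposition~\ref{pro:croissancej-delta} is not needed.
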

We now turn to the proof of Proposition~\ref{pro:122}. 
\begin{proof}[Proof of Proposition \ref{pro:122}]
 
The proof is performed in three steps. In the first one, we construct
sub- and super-solutions of \eqref{eq:121} in $\R \times \R$ with good
translation invariance properties (see the first two lines of \eqref{eq:123}). 
We next apply Perron's method in order to get a (possibly discontinuous) solution
satisfying the same properties. Finally, in Step~3, we prove that if the functions $G_j$
do not depend on $\tau$, then we can construct a solution in such a way that
it does not depend on $\tau$ either. 

\medskip

\noindent {\bf Step 1: global sub- and super-solution}

\noindent By Proposition~\ref{pro:11}, we know that the solution
$(u_j, \xi_j)$ of \eqref{eq:22n}, \eqref{eq:22n-ic} with initial data
$u_0(y)=py=\xi_0(y)$ satisfies on $[0,+\infty )\times \R$
\begin{equation}\label{eq:124}
\left\{ \begin{array}{l}
(u_j)_y\ge 0,  \\
|u_j(\tau,y)-py-\lambda \tau|\le C_3, \\
|u_j(\tau,y+y')-u_j(\tau,y)-py'|\le 1,\\
u_{j+1}(\tau,y)\ge u_j(\tau,y) ,
\end{array}\right.
\quad
\left\{ \begin{array}{l}
(\xi_j)_y\ge 0,  \\
|\xi_j(\tau,y)-py-\lambda \tau|\le C_3, \\
|\xi_j(\tau,y+y')-\xi_j(\tau,y)-py'|\le 1,\\
\xi_{j+1}(\tau,y)\ge \xi_j(\tau,y) \, .
\end{array}\right.
\end{equation}

We first construct a sub-solution and a super-solution of \eqref{eq:121} for $\tau \in \R$ (and not
only $\tau \ge 0$) that also satisfy the first two lines of \eqref{eq:123}, \textit{i.e.} satisfy
for all $k,l \in \Z$,
\begin{equation}\label{eq:transinv}
U (\tau + k, y) = U (\tau,y + \lambda \frac{k}p) \quad \text{ and } \quad
U (\tau,y+ \frac{l}p) = U (\tau,y) + l\, .
\end{equation}
To do so, we consider for $j\in \{1,\dots,n\}$ two sequences of
functions (indexed by $m \in \N$,  \; $m\to \infty$)
$$
u_j^m(\tau,y)=u_j(\tau+m,y)-\lfloor \lambda m \rfloor, \quad \xi_j^m(\tau,y)=\xi_j(\tau+m,y)-\lfloor\lambda m\rfloor
$$
and consider 
$$
\overline{u}_j={\limsup_{m\to +\infty}}^* u_j^m, \quad \overline{\xi}_j={\limsup_{m\to +\infty}}^* \xi_j^m
$$
$$
\underline{u}_j={\liminf_{m\to +\infty}}_* u_j^m, \quad \underline{\xi}_j={\liminf_{m\to +\infty}}_* \xi_j^m \, .
$$
We first remark  that thanks to \eqref{eq:41}, all these semi-limits are finite. 
We also remark that for all $k,l \in \Z$, 
$$
(\overline{u}_j (\tau+k,y - k \lambda /p + l/p)-l,\overline{\xi}_j (\tau+k,y - k \lambda /p + l/p)-l )
$$ 
is a sub-solution of \eqref{eq:121}. A similar remark can be done for the super-solutions $(\underline u_j, \underline \xi_j)_j$. 

Now a way to construct sub-solution (resp. a super-solution) of \eqref{eq:22n} satisfying \eqref{eq:transinv}
is to consider 
\begin{equation}\label{eq:126}
\left\{\begin{array}{l}
\overline{u}_j^\infty(\tau,y)=\left( \sup_{k,l\in\Z}
\left(\overline{u}_j(\tau+k,y-k\lambda/p+l/p)-l\right) \right)^*,
\smallskip \\
\overline{\xi}_j^\infty(\tau,y)=\left( \sup_{k,l\in\Z}
\left(\overline{\xi}_j(\tau+k,y-k\lambda/p+l/p)-l\right) \right)^*,
\end{array}\right.
\end{equation}
and
\begin{equation}\label{eq:127}
 \left\{\begin{array}{l}
\underline{u}_j^\infty(\tau,y)=\left( \inf_{k,l\in\Z}
\left(\underline{u}_j(\tau+k,y-k\lambda/p+l/p)-l\right) \right)_* ,
\smallskip \\
\underline{\xi}_j^\infty(\tau,y)=\left( \inf_{k,l\in\Z}
\left(\underline{\xi}_j(\tau+k,y-k\lambda/p+l/p)-l\right) \right)_* .
\end{array}\right.
\end{equation}
Notice that $\overline{u}_j^\infty$, $\underline{u}_j^\infty,\;
\overline \xi_j^\infty$ and $\underline \xi_j^\infty$ satisfy moreover
\eqref{eq:124} on $\R\times \R$.  Therefore we have in particular
$$
\overline{u}_j^\infty \le \underline{u}_j^\infty + 2\lceil C_3 \rceil
\quad {\rm and}\quad \overline{\xi}_j^\infty \le
\underline{\xi}_j^\infty + 2\lceil C_3 \rceil \, .
$$
\medskip

\noindent {\bf Step 2: existence by Perron's method}

\noindent 
Applying Perron's method we see that the lowest-$*$ super-solution
$((u_j^\infty)_j, (\xi_j^\infty)_j)$ lying above
$((\overline{u}_j^\infty)_j, (\overline \xi_j^\infty)_j)$ is a
(possibly discontinuous) solution of \eqref{eq:121} on $\R\times \R$
and satisfies
$$
\overline{u}_j^\infty\le u_j^\infty \le \underline{u}_j^\infty + 2\lceil C_3 \rceil \quad {\rm and}\quad 
\overline{\xi}_j^\infty\le \xi_j^\infty \le \underline{\xi}_j^\infty + 2\lceil C_3 \rceil\, .
$$
We next prove that $u^\infty$ satisfies \eqref{eq:123}. 
 For $j\in \{1,\dots,n\}$, let us consider
\begin{equation}\label{eq:128}
\tilde{u}_j^\infty(\tau,y)=\left(\inf_{k,l\in\Z}
\left({u}_j^\infty (\tau+k,y-k\lambda/p+l/p)-l\right)\right)_*
\end{equation}
$$\tilde{\xi}_j^\infty(\tau,y)=\left(\inf_{k,l\in\Z}
\left({\xi}_j^\infty (\tau+k,y-k\lambda/p+l/p)-l\right)\right)_*
$$
By construction the family $((\tilde{u}_j^\infty)_j, (\tilde
\xi_j^\infty)_j)$ is a super-solution of \eqref{eq:121} and is again
above the sub-solution $((\overline{u}_j^\infty)_j, (\overline
\xi_j^\infty)_j)$. Therefore from the definition of
$((u^\infty_j)_j,(\xi^\infty_j)_j)$, we deduce that
$$\tilde{u}_j^\infty = u_j^\infty\quad {\rm and}\quad \tilde{\xi}_j^\infty = \xi_j^\infty$$
which implies that $u_j^\infty$ and $\xi_j^\infty$ satisfy
\eqref{eq:transinv}, \textit{i.e} the first two equalities of
\eqref{eq:123}.

Similarly, we can consider, for $j\in \{1,\dots,n\}$
$$\hat{u}_j^\infty(\tau,y)=\left(\inf_{b\in [0,+\infty)} u_j^\infty(\tau,y+b)\right)_*$$
$$\hat{\xi}_j^\infty(\tau,y)=\left(\inf_{b\in [0,+\infty)} \xi_j^\infty(\tau,y+b)\right)_*$$

which is again super-solution above the sub-solution
$((\overline{u}^\infty_j)_j,(\overline \xi_j^\infty)_j)$. Therefore
$$\hat{u}^\infty_j = u^\infty_j\quad {\rm and}\quad \hat{\xi}^\infty_j = \xi^\infty_j$$
which implies that $u_j^\infty$ and $\xi_j^\infty$ are  non-decreasing in $y$, \textit{i.e.} 
the third line of \eqref{eq:123} is satisfied.

Let us now prove that $u_j^\infty$ and $\xi_j^\infty$ are  non-decreasing in $j$. We consider, for $j\in \{1,\dots,n\}$
$$\check{u}_j^\infty(\tau,y)=\left(\inf_{ k\ge 0} u_{j+k}^\infty(\tau,y)\right)_*=\left(\inf_{0\le k< n} u_{j+k}^\infty(\tau,y)\right)_*$$
$$\check{\xi}_j^\infty(\tau,y)=\left(\inf_{k\ge 0} \xi_{j+k}^\infty(\tau,y)\right)_*=\left(\inf_{0\le k< n} \xi_{j+k}^\infty(\tau,y)\right)_*.$$
The fact that this is a super-solution uses assumption (A6). Indeed, let us
assume that the infimum for $u_j$ is reached for the index $k_u$ and that
the infimum for $\xi_j$ is reached for the index $k_\xi$. Then, formally,
on one hand we have
\begin{align*}
(\check u_j^\infty)_\tau(\tau,y)=& \a_0(\xi_{j+k_u}^\infty(\tau,y)-u_{j+k_u}^\infty(\tau,y))\\
\ge& \a_0(\xi_{j+k_\xi}^\infty(\tau,y) - u_{j+k_u}^\infty(\tau,y))\\
\ge& \a_0 (\check \xi_j^\infty(\tau,y)- \check u_j^\infty(\tau,y))
\end{align*}
where we have used the fact that $\xi_{j+k_u}^\infty(\tau,y)\ge \xi_{j+k_\xi}^\infty(\tau,y)$.
On the other hand, we have
\begin{align*}
(\check \xi_j^\infty)_\tau (\tau, y)=& G_{j+k_\xi}(\tau, [u^\infty(\tau,\cdot)]_{j+k_\xi}(y), \xi_{j+k_\xi}^\infty(\tau,y), \inf_{y'}(\xi_{j+k_\xi}^\infty(\tau, y')-py')+py -\xi_{j+k_\xi}^\infty(\tau,y), (\xi_{j+k_\xi}^\infty)_y)\\
\ge &
G_{j+k_\xi}(\tau, [\check u^\infty(\tau,\cdot)]_{j+k_\xi}(y), \check \xi_{j}^\infty(\tau,y), \inf_{y'}(\check \xi_{j}^\infty(\tau, y')-py')+py -\check \xi_{j}^\infty(\tau,y), (\check \xi_{j}^\infty)_y)\\
\ge &
G_{j+k_\xi-1}(\tau, [\check u^\infty(\tau,\cdot)]_{j+k_\xi-1}(y), \check \xi_{j}^\infty(\tau,y), \inf_{y'}(\check \xi_{j}^\infty(\tau, y')-py')+py -\check \xi_{j}^\infty(\tau,y), (\check \xi_{j}^\infty)_y)\\
\ge & \dots\\
\ge &
G_{j}(\tau, [\check u^\infty(\tau,\cdot)]_{j}(y), \check \xi_{j}^\infty(\tau,y), \inf_{y'}(\check \xi_{j}^\infty(\tau, y')-py')+py -\check \xi_{j}^\infty(\tau,y), (\check \xi_{j}^\infty)_y)
\end{align*}
where we have used the fact that $u^\infty_{j+k_\xi+k}\ge \check u^\infty_{j+k_\xi+k}$ and  $\xi_{j+k_\xi}^\infty(\tau, y')\ge\check \xi_{j}^\infty(\tau, y')$ joint to  the monotonicity assumption of $G$ in the variable $V_i$ and $a$ for the first inequality and assumtion (A6) joint to the fact that $\check u^\infty_j$ is non-decreasing in $j$ (by construction) for the other inequalities.

We then conclude that $(\check u_j^\infty, \check \xi_j^\infty)$
is again super-solution above the sub-solution
$((\overline{u}^\infty_j)_j,(\overline \xi_j^\infty)_j)$. Therefore
$$u^\infty_j=\check u^\infty_j\quad {\rm and}\quad  \xi^\infty_j=\check\xi_j^\infty$$
which implies that $u_j^\infty$ and $\xi_j^\infty$ are  non-decreasing in $j$, \textit{i.e.} 
the forth line of \eqref{eq:123} is satisfied.

Finally, the function $((u_j^\infty-\lceil C_3\rceil)_j, (\xi_j^\infty-\lceil C_3\rceil)_j) $ still satisfies 
\eqref{eq:123} and also satisfies  \eqref{eq:125}.
\medskip

\noindent {\bf Step 3: Further properties when the  $G_j$ are independent on $\tau$}

\noindent
When the $G_j$ do not depend on $\tau$, we can apply Steps 1 and 2 with $k\in\Z$
 in \eqref{eq:126}, \eqref{eq:127} and \eqref{eq:128} replaced with $k\in\R$. This
implies  that the hull function $((h_j)_j,(g_j)_j)$ does not depend  on $\tau$.
This ends the proof of the proposition.
\end{proof}
\medskip

\begin{pro}[\bf Definition and continuity of the effective Hamiltonian] \label{pro:129}
~

Consider $p>0$ and assume {\rm (A1)-(A6)}. Then
\begin{itemize}
\item
there exists a unique real number
$\lambda \in \R$  such that there exists a  solution $((u_j^\infty)_j,(\xi_j^\infty)_j)$
of  \eqref{eq:121} on $\R\times \R$ such that there exists $C>0$ such that for all $\tau$, 
\begin{equation}\label{eq:125bis}
|h_j(\tau,z)-z|\le  C \quad {\rm and} \quad   |g_j(\tau,z)-z|\le C,
\end{equation}
where the $h_j$ and the $g_j$ are defined in \eqref{eq:h} and
\eqref{eq:g}; moreover, we can choose $C= 2\lceil C_3\rceil$ with
$C_3$ given in \eqref{eq:C4};
\item
if $\lambda$ is seen as a function $\overline{G}$ of $p$ ($\lambda=
\overline{G}(p)$), then this
function $\overline{G} : (0,+\infty) \to \R$ is continuous.
\end{itemize}
\end{pro}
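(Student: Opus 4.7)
The plan is to split the proof into a uniqueness argument and a continuity argument, both relying on the comparison principle (Proposition \ref{pro:3}) together with the linear growth estimates \eqref{eq:125} and the slope bound \eqref{eq:140ante} from Proposition \ref{pro:11}. Existence is already guaranteed by Proposition \ref{pro:122}; the inequalities $|h_j(\tau,z)-z|\le 2\lceil C_3\rceil$ and $|g_j(\tau,z)-z|\le 2\lceil C_3\rceil$ then follow by translating \eqref{eq:125} via the definitions \eqref{eq:h}, \eqref{eq:g}.

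For uniqueness, suppose two admissible slopes $\lambda_1 < \lambda_2$ exist, with associated solutions $((u^1_j)_j,(\xi^1_j)_j)$ and $((u^2_j)_j,(\xi^2_j)_j)$ of \eqref{eq:121} on $\R\times\R$. From the bound \eqref{eq:125bis}, and unwinding the definitions \eqref{eq:h}--\eqref{eq:g}, we recover $|u^i_j(\tau,y)-py-\lambda_i\tau|\le C$ and $|\xi^i_j(\tau,y)-py-\lambda_i\tau|\le C$ for some $C>0$ and $i=1,2$. Choose $T>0$ so large that $(\lambda_2-\lambda_1)T>2C$; then at $\tau=-T$ one has $u^1_j(-T,y)\ge u^2_j(-T,y)$ and $\xi^1_j(-T,y)\ge\xi^2_j(-T,y)$ for every $j$ and $y$. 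Since both families satisfy the linear growth \eqref{eq:croissance} on $[-T,T]$ with the same linear initial datum $y\mapsto py$, Proposition \ref{pro:3} applied on the time interval $[-T,T]$ (after a trivial time shift) yields $u^1_j\ge u^2_j$ and $\xi^1_j\ge\xi^2_j$ everywhere on $[-T,T]\times\R$. Evaluating this at $\tau=+T$ and using the bounds the other way around gives $u^2_j(T,y)-u^1_j(T,y)\ge (\lambda_2-\lambda_1)T-2C>0$, a contradiction. Hence $\lambda$ is unique.

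For continuity, fix $p>0$ and let $p_n\to p$. Denote by $\lambda_n=\overline G(p_n)$ the associated slope and by $((u^n_j)_j,(\xi^n_j)_j)$ the corresponding solution of \eqref{eq:121} with parameter $p_n$. Since the constant $C_4$ in \eqref{eq:C4} and the constant $C_3$ in \eqref{eq:125bis} depend continuously on $p$, the sequence $(\lambda_n)$ is bounded and the functions $(u^n_j,\xi^n_j)$ are locally uniformly bounded on $\R\times\R$. Up to extraction we may assume $\lambda_n\to \lambda^\star$. Define the half-relaxed limits
\[
\overline u_j=\limsup{}^{*}_{n\to\infty} u^n_j,\qquad \underline u_j=\liminf{}_{*,\,n\to\infty} u^n_j,
\]
and analogously $\overline\xi_j,\underline\xi_j$. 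By stability of viscosity solutions (and continuity of the Hamiltonian $G_j^\delta$ in its arguments, including the parameter $p$ which only enters linearly through the term $py$ and in the infimum), $((\overline u_j)_j,(\overline\xi_j)_j)$ is a sub-solution and $((\underline u_j)_j,(\underline\xi_j)_j)$ is a super-solution of \eqref{eq:121} with parameter $p$. Both satisfy $|\,\cdot\,-py-\lambda^\star\tau|\le 2\lceil C_3(p)\rceil$.

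Let $((u^\infty_j)_j,(\xi^\infty_j)_j)$ denote the solution with slope $\overline G(p)$ provided by Proposition \ref{pro:122}. If $\lambda^\star\ne\overline G(p)$, say $\lambda^\star>\overline G(p)$, then exactly as in the uniqueness argument, choosing $T$ large so that $(\lambda^\star-\overline G(p))T$ exceeds the sum of the two constants $2\lceil C_3\rceil$, we obtain $u^\infty_j(-T,y)\ge\overline u_j(-T,y)$ and $\xi^\infty_j(-T,y)\ge\overline\xi_j(-T,y)$. Proposition \ref{pro:3} then forces $u^\infty_j\ge\overline u_j$ on $[-T,T]\times\R$, contradicting the opposite inequality at $\tau=+T$ coming from the linear bounds. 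The symmetric case $\lambda^\star<\overline G(p)$ is handled by exchanging the roles of $\underline u_j$ with $u^\infty_j$. Thus every subsequential limit of $(\lambda_n)$ equals $\overline G(p)$, so $\lambda_n\to\overline G(p)$, proving continuity. The main delicate point is verifying the stability of viscosity sub/super-solutions under the joint limit $p_n\to p$, $\lambda_n\to\lambda^\star$, since the parameter $p$ appears inside the nonlocal $\inf$ term; this will require noting that the $p_n y$ correction is locally uniform and that the translates $\xi^n_j(\tau,y)-p_n y$ remain uniformly bounded, so the infima pass to the limit in a controlled way.
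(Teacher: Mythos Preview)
Your proof is correct and follows the same overall strategy as the paper (comparison principle for uniqueness, half-relaxed limits plus stability for continuity), but with two genuine variations worth noting.

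For uniqueness, the paper works in the hull-function variables: it sets $u_j^i(\tau,y)=h_j^i(\tau,\lambda_i\tau+py)$, compares at $\tau=0$ (where $u_j^1(0,\cdot)\le u_j^2(0,\cdot)+2C$), propagates by the comparison principle on $[0,+\infty)$, and then exploits the $\tau$-periodicity $h_j^i(\tau+1,z)=h_j^i(\tau,z)$ to obtain $\lambda_1 k\le\lambda_2 k+4C$ for all $k\in\N$. Your argument instead uses that the solutions are defined on all of $\R\times\R$ and compares on a long symmetric interval $[-T,T]$. This is more direct and does not rely on the $\tau$-periodicity structure of hull functions, at the mild cost of checking that Proposition~\ref{pro:3} applies after a time shift with initial data that are merely linear-plus-bounded rather than of the form $u_0(y+j/n)$ (the paper's own proof tacitly uses this extension as well).

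For continuity, the paper takes half-relaxed limits of the hull functions $h_j^m,g_j^m$ in the $(\tau,z)$ coordinates and then changes variables back; you take half-relaxed limits of $(u_j^n,\xi_j^n)$ directly in $(\tau,y)$. The paper's choice is slightly cleaner for the nonlocal $\inf$ term when $\delta>0$: in $(\tau,z)$ coordinates that term becomes $\inf_{z'}(g_j(\tau,z')-z')+z-g_j(\tau,z)$, and the only residual $p_m$-dependence is a multiplicative factor and the periodicity $h_{j+n}(\tau,z)=h_j(\tau,z+p_m)$. In your coordinates the $\inf$ term carries $p_n y$ explicitly, which is exactly the ``delicate point'' you flag; your remark that the uniform bound on $\xi_j^n(\tau,y)-p_n y$ controls the infima is the right observation to close this. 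Either route leads to the same conclusion.
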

Before to prove this proposition, let us give the proof of Theorem \ref{th:2}. 
\begin{proof}[Proof of Theorem \ref{th:2}]
Just apply Proposition  \ref{pro:129} with $G=F$.
\end{proof}

\begin{proof}[Proof of Proposition \ref{pro:129}]
The proof follows classical arguments. However, we give it for the reader's convenience. 
The proof is divided in two steps. 

\noindent {\bf Step 1: Uniqueness of $\lambda$}

\noindent
Given some  $p\in (0,+\infty )$, assume that there exist
$\lambda_1,\lambda_2\in\R$ with their corresponding hull functions
$((h_j^1)_j,(g_j^1)_j),((h_j^2)_j,(g_j^2)_j)$. Then define for $ i=1,2$, $j\in\{1,\dots,n\}$
$$
u_j^i(\tau,y)=h_j^i(\tau, \lambda_i\tau +py)\quad {\rm and}\quad \xi_j^i(\tau,y)=g_j^i(\tau, \lambda_i\tau +py)
$$
which are both solutions of equation (\ref{eq:22n}) on
$[0,+\infty)\times \R$. 
By Corollary \ref{pro:122bis}, we know that $h_j$ and $g_j$ satisfy
\eqref{eq:125bis}.  Then we have with $C= 2\lceil C_3\rceil$
$$u_j^1(0,y) \le u_j^2(0,y) + 2C\quad {\rm and}\quad \xi_j^1(0,y) \le \xi_j^2(0,y) + 2C $$
which implies (from the comparison principle) for all $(\tau,y)\times
[0,+\infty )\times \R$
$$
u_j^1(\tau,y) \le u_j^2(\tau,y) + 2C \quad {\rm and}\quad \xi_j^1(\tau,y) \le \xi_j^2(\tau,y) + 2C  \, .
$$
Using the fact that $h_j^i(\tau+1,z)=h_j^i(\tau,z)$ and
$g_j^i(\tau+1,z)=g_j^i(\tau,z)$, we deduce that for $\tau=k\in\N$ and
$y=0$ we have
$$
h_j^1(0, \lambda_1k) \le h_j^2(0, \lambda_2k) +2C \quad {\rm and}\quad 
g_j^1(0, \lambda_1k) \le g_j^2(0, \lambda_2k) +2C
$$
which implies by (\ref{eq:125bis})
$$
\lambda_1k \le \lambda_2k +4 C \, .
$$
Because this is true for any $k\in\N$, we deduce that
$$
\lambda_1 \le \lambda_2 \, .
$$
The reverse inequality is obtained exchanging $((h^1_j)_j,(g_j^1)_j)$
and $((h^2_j)_j,(g_j^2)_j)$.  We finally deduce that
$\lambda_1=\lambda_2$, which proves the uniqueness of the real
$\lambda$, that we call $\overline{G}(p)$.
\medskip

\noindent {\bf Step 2: Continuity of the map $p\mapsto \overline{G}(p)$}

\noindent
Let us consider a sequence $(p_m)_m$ such that $p_m\to p>0$.
Let $\lambda_m=\overline{G}(p_m)$ and $((h_j^m)_j,(g_j^m)_j)$ be the corresponding hull
functions. From Corollary~\ref{pro:122bis}, we can choose these hull
functions such that for $j\in \{1,\dots,n\}$
$$
|h_j^m(\tau,z)-z|\le  2\lceil C_3\rceil, \quad {\rm and}\quad 
|g_j^m(\tau,z)-z|\le  2\lceil C_3\rceil
$$
and we have
$$
|\lambda_m|\le C_4
$$
where we recall that $C_4$ is defined in (\ref{eq:C4}). Remark that
both $C_3$ and $C_4$ depends on $p_m$, but can be bounded for $p_m$ in
a neighbourhood of $p$.  We deduce in particular that there exists a
constant $C_5>0$ such that
$$
|h_j^m(\tau,z)-z|\le C_5,\quad |g_j^m(\tau,z)-z|\le C_5 \quad \mbox{and}\quad |\lambda_m|\le C_5\, .
$$
Let us consider  a limit $\lambda_\infty$ of $(\lambda_m)_m$, and let us define
$$
\overline{h}_j=\limsup_{m\to +\infty}{}^* h_j^m,\quad {\rm and}\quad  
\overline{g}_j=\limsup_{m\to +\infty}{}^* g_j^m\, .
$$
This family of functions $((\overline{h}_j)_j,(\overline{g}_j)_j)$ is such that the family
$$
((\overline{u}_j(\tau,y))_j,(\overline{\xi}_j(\tau,y))_j)=
((\overline{h}_j(\tau,\lambda_\infty \tau+py))_j,(\overline{g}_j(\tau,\lambda_\infty \tau+py))_j)
$$
is a sub-solution of \eqref{eq:121} on $\R\times \R$.
On the other hand, if  $((h_j)_j,(g_j)_j)$ denotes the hull function associated with $p$ and
$\lambda=\overline{G}(p)$,
then 
$$
((u_j(\tau,y))_j,(\xi_j(\tau,y))_j)=(({h}_j(\tau,\lambda \tau+py))_j,({g}_j(\tau,\lambda \tau+py))_j)
$$
is a solution of (\ref{eq:121}) on $\R\times \R$.
Finally, as in Step 1, we conclude that
$$
\lambda_\infty \le \lambda\, .
$$
Similarly, considering
$$
\underline{h}_j=\liminf_{m\to +\infty}{}_* h_j^m\quad {\rm and}
\quad \underline{g}_j=\liminf_{m\to +\infty}{}_* g_j^m
$$
we can show that
$$
\lambda_\infty \ge \lambda \, .
$$
Therefore $\lambda_\infty =\lambda$ and this proves that
$\overline{G}(p_m) \to \overline{G}(p)$; the continuity of the
map $p\mapsto \overline{G}(p)$ follows and
this ends the proof of the proposition.
\end{proof}

\section{Construction of Lipschitz continuous approximate hull functions}
\label{s5}

\setcounter{equation}{0}

When proving the Convergence Theorem \ref{th:3n}, we explained that, on the
one hand, it is necessary to deal with hull functions
$(h,g)=((h_j(\tau,z))_j, (g_j (\tau,z))_j)$ that are uniformly continuous
in $z$ (uniformly in $\tau$ and $j$) in order to apply Evans' perturbed
test function method; on the other hand, given some $p>0$, we also know
some Hamiltonians $F_j$, with corresponding effective Hamiltonian
$\overline{F}(p)$, such that every corresponding hull function $h_j$ is
necessarily discontinuous in $z$ for $\alpha_0=+\infty$ (see
\cite{A1,FIM2}).  Recall that a hull function $(h,g)$ solves in particular
\begin{equation}\label{eq:hull}
\left\{\begin{array}{l}
(h_j)_\tau + \lambda (h_j)_z =  \alpha_0 (g_j -h_j)\\
(g_j)_\tau + \lambda (g_j)_z = 2 F_j(\tau, [h(\tau,\cdot)]_{j,m})+ \alpha_0 (h_j -g_j)\\
\end{array}\right.
\end{equation}
 with $\lambda=\overline{F}(p)$
and
$$
h_{j+n}(\tau,z)=h_j(\tau,z+p), \; g_{j+n}(\tau,z)=g_j(\tau,z+p) \, .
$$
We overcome this difficulty as in \cite{FIM2} (see also \cite{fim,IM,imr}). 

We build approximate Hamiltonians $G^\delta$ with corresponding effective
Hamiltonians $\lambda^\delta=\overline{G}^\delta(p)$, and
corresponding hull functions $(h^\delta,g^\delta)$, such that
$$
\left\{\begin{array}{l}
(h_j ^\delta,g_j^\delta) \quad \mbox{is Lipschitz continuous with respect to } z \text{ uniformly
  in } \tau \text{ and } j\\
\overline{G}^\delta(p) \to \overline{F}(p)  \quad \mbox{as}\quad  \delta\to
0\\
(h^\d,g^\d) \mbox{ is a sub-/super-solution of \eqref{eq:hull}.} 
\end{array}\right.
$$
We will show that it is enough to  choose for $\d\ge 0$
\begin{equation}\label{defi:gdelta}
G_j^\delta(\tau,V,r,a,q)=2F_j(\tau,V)+\alpha_0 (V_0 -r) +\delta(a_0+a)q^+ 
\end{equation}
with $a_0 \in \R$ (in fact, we will consider $a_0 = \pm 1$).
\medskip

We have the following variant of Corollary \ref{pro:122bis}.
\begin{pro}[\bf Existence of Lipschitz continuous approximate hull functions]\label{pro:135}
~ \\
Assume {\rm (A1)-(A3)}. Given $p>0$, $0<\delta \le 1$ and $a_0\in\R$, then there
exists a family of Lipschitz continuous functions $((h_j)_j,(g_j)_j)$ satisfying for $j\in\{1,\dots,n\}$ 
\begin{equation}\label{eq:136}
\left\{\begin{array}{l}
h_j(\tau,z+1)=h_j(\tau,z)+1\\
h_j(\tau+1,z)=h_j(\tau,z) \\
\\
0\le (h_j)_z \le 1 + \frac{2L_F}{p\delta}
\end{array}\right.
\quad
\left\{\begin{array}{l}
g_j(\tau,z+1)=g_j(\tau,z)+1\\
g_j(\tau+1,z)=g_j(\tau,z) \\
\\
0\le (g_j)_z \le 1 + \frac{2L_F}{p\delta}
\end{array}\right.
\end{equation}
and there exists $\lambda\in\R$ such that
\begin{equation}\label{eq:141}
\left\{\begin{array}{l}
\left\{\begin{array}{rl}
(h_j)_\tau +\lambda (h_j)_z =& \alpha_0 (g_j - h_j) \\
(g_j)_\tau + \lambda (g_j)_z =& 2 F_j(\tau, [h(\tau,\cdot)]_{j,m}) + \alpha_0 (h_j -g_j) \\
& + \delta p \left\{a_0
+\inf_{z'\in\R}\left(h_j(\tau,z')-z') +z-h_j(\tau,z)\right)\right\}(h_j)_z   \\
\end{array}\right.
\\ \\
\left\{\begin{array}{rl}
h_{j+n}(\tau,z)=&h_j(\tau,z+p) \\
g_{j+n}(\tau,z) =&g_j (\tau,z+p) 
\end{array}\right.
\end{array}\right.
\end{equation}
and for all $\tau,z,z' \in \R$
\begin{equation}\label{eq:147bis}
\left| h_j(\tau,z')-z' +z-h_j(\tau,z)\right|\le 1 \quad \text{and} \quad
\left| g_j(\tau,z')-z' +z-g_j(\tau,z)\right|\le 1 
\, .
\end{equation}
Moreover there exists a constant $C_4>0$ defined in \eqref{eq:C4} such that 
\begin{equation}\label{eq:140}
|\lambda|\le C_4   
\end{equation}
and for all $(\tau,z)\in \R\times \R$, 
\begin{equation}\label{eq:138}
|h(\tau,z)-z|\le C(C_4,p,\alpha_0, \delta |a_0|p) 
  \, ,
\end{equation}
$$|g(\tau,z)-z|\le C(C_4,p,\alpha_0, \delta |a_0|p) 
  \, .$$
Moreover, when the $F_j$ do not depend on $\tau$, we can choose the hull
function $((h_j)_j,(g_j)_j)$ such that it does not depend on $\tau$ either.
\end{pro}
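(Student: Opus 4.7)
The plan is to apply Proposition~\ref{pro:122} with $G_j = G_j^\delta$ defined by \eqref{defi:gdelta} and then pass to hull functions via the change of variables \eqref{eq:h}--\eqref{eq:g}. Proposition~\ref{pro:122} provides a time-space periodic solution $((u_j^\infty)_j,(\xi_j^\infty)_j)$ of \eqref{eq:121} satisfying \eqref{eq:123} and \eqref{eq:125}, together with a real number $\lambda$ whose modulus is controlled by $C_4$ via Proposition~\ref{pro:11}. Setting $h_j(\tau,z)=u_j^\infty(\tau,(z-\lambda\tau)/p)$ and $g_j(\tau,z)=\xi_j^\infty(\tau,(z-\lambda\tau)/p)$ on $\{1,\dots,n\}$ and extending by \eqref{eq:h}--\eqref{eq:g}, the bounds \eqref{eq:140} and \eqref{eq:138} are immediate, while the periodicity identities in \eqref{eq:136} are direct translations of \eqref{eq:123}. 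The equation \eqref{eq:141} is then obtained by the chain rule, using $(u_j^\infty)_\tau = (h_j)_\tau + \lambda (h_j)_z$ (and similarly for $g_j$) together with $(u_j^\infty)_y=p(h_j)_z$, $(\xi_j^\infty)_y=p(g_j)_z$, and the explicit form of $G_j^\delta$.

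The genuinely new ingredient compared with Corollary~\ref{pro:122bis} is the Lipschitz bound $0\le(h_j)_z,(g_j)_z\le 1+2L_F/(p\delta)$. It is supplied by Proposition~\ref{pro:130}, which gives $0\le(u_j)_y,(\xi_j)_y\le p+2L_F/\delta$ for the Cauchy problem associated with $G_j^\delta$ and linear initial data $u_0(y)=\xi_0(y)=py$; dividing by $p$ yields the desired bound on $(h_j)_z$ and $(g_j)_z$. The oscillation bound \eqref{eq:147bis} has the same character: the underlying Cauchy solution already satisfies $|u_j(\tau,y+y')-u_j(\tau,y)-py'|\le 1$ by \eqref{eq:45}, which translates directly into \eqref{eq:147bis} after the change of variable $z=\lambda\tau+py$.

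The main obstacle is to check that both the Lipschitz estimate of Proposition~\ref{pro:130} and the oscillation estimate of Proposition~\ref{pro:11} survive the Perron-type construction of $(u_j^\infty,\xi_j^\infty)$ used in the proof of Proposition~\ref{pro:122}; there the solution is built as a combination of $\limsup^{*}/\liminf_{*}$ and $\sup/\inf$ over translates of the Cauchy solution. Both estimates can be equivalently phrased as one-sided comparisons of the solution with translates of itself: the Lipschitz bound amounts to comparing $u_j(\tau,y)$ with $u_j(\tau,y+b)$ for $b\ge 0$, whereas the oscillation bound amounts to comparing $u_j(\tau,y)$ with $u_j(\tau,y+1/p)-1$ via the exact identity $u_j(\tau,y+1/p)=u_j(\tau,y)+1$ already established in the proof of Proposition~\ref{pro:11}. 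Since such one-sided comparisons are stable under $\sup$, $\inf$, $\limsup^{*}$, and $\liminf_{*}$, they are automatically inherited by $(u_j^\infty,\xi_j^\infty)$. Finally, when the $F_j$ are independent of $\tau$, the last assertion of Proposition~\ref{pro:122} permits choosing $(u_j^\infty,\xi_j^\infty)$ independent of $\tau$, which yields a $\tau$-independent hull function as required.
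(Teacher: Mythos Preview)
Your overall strategy---reduce to Proposition~\ref{pro:122} and carry the Lipschitz bound of Proposition~\ref{pro:130} through the construction---is natural, but there is a real gap in the last paragraph. The time-space periodic solution in Proposition~\ref{pro:122} is \emph{not} built purely as a combination of $\limsup^{*}/\liminf_{*}$ and $\sup/\inf$ over translates of the Cauchy solution: those operations only produce the barriers $\overline{u}_j^\infty$ and $\underline{u}_j^\infty$. The actual solution $u_j^\infty$ is then obtained by Perron's method as the lowest super-solution lying above $\overline{u}_j^\infty$, and the paper explicitly warns that it may be discontinuous. Arbitrary super-solutions need not be Lipschitz, so there is no reason for the Perron infimum to inherit the bound $0\le (u_j)_y\le p+2L_F/\delta$. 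Your justification only addresses the lower bound (monotonicity); the upper bound would require comparing $u_j(\tau,y+b)$ with $u_j(\tau,y)+Lb$, but since the equation is invariant only under \emph{integer} shifts of the unknown (Assumption~(A4')), the translate trick from Step~2 of the proof of Proposition~\ref{pro:122} cannot be used with the non-integer constant $Lb$.

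The paper handles this by abandoning Perron's method here. It first upgrades the space Lipschitz estimate of Proposition~\ref{pro:130} to a full space-time Lipschitz estimate on the Cauchy solution: the time-Lipschitz bound on $u_j$ follows from $(u_j)_\tau=\alpha_0(\xi_j-u_j)$ together with Lemma~\ref{lem:u-xi}, and the time-Lipschitz bound on $\xi_j$ follows by bounding the right-hand side of its equation via \eqref{eq:boundF1}--\eqref{eq:boundF2}, Lemma~\ref{lem:u-xi}, and Proposition~\ref{pro:130}. With equi-Lipschitz continuity in hand, one extracts by Arzel\`a--Ascoli a locally uniformly convergent subsequence of the translates $u_j^m(\tau,y)=u_j(\tau+m,y)-\lfloor\lambda m\rfloor$ (and similarly for $\xi_j^m$); the limit is a genuine Lipschitz solution on $\R\times\R$, and both the gradient bound and the oscillation estimate pass to it directly. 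Thus Perron is replaced by compactness, which is exactly what is needed to preserve the quantitative Lipschitz constant.
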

\begin{proof}[Proof of Proposition \ref{pro:135}]

\noindent
The construction follows the one made in Proposition~\ref{pro:11} and
Proposition~\ref{pro:122}. However, Proposition \ref{pro:122} has to be
adapted. Indeed, since we want to construct a Lipschitz continuous function
with a precise Lipschitz estimate, we do not want to use Perron's method. 
This is the reason why here we can use a space-time Lipschitz
estimate of $((u_j),(\xi_j))$  to get enough compacity to pass to the
limit. 

The space Lipschitz estimate comes from Proposition \ref{pro:130}. The
time Lipschitz estimate of the $u_j$'s follows from Lemma
\ref{lem:u-xi} and the equation satisfied by $u_j$. The time Lipschitz estimate
 of the $\xi_j$'s is obtained in the same way, using
the fact that we can bound the right hand side of the equation
satisfied by $\xi_j$. Indeed, one can use the space oscillation estimate of $u$
to bound $F(t,[u(t,\cdot)]_{j,m}(x))$ (as we did in \eqref{eq:boundF1}-\eqref
{eq:boundF2}) and Lemma \ref{lem:u-xi} and Proposition \ref{pro:130} to
bound remaining terms.
\end{proof}
\medskip

We finally have
\begin{pro}[\bf Sub- and super- Lipschitz continuous hull functions]\label{pro:139}
  We consi\-der $0<\delta\le 1$ and the Lipschitz continuous hull function
   obtained in Proposition \ref{pro:135} for
  $a_0=\pm 1$, that we call $((h^{\d,\pm}_{j} )_j,(g^{\d,\pm}_{j} )_j)$, and  the corresponding value $\lambda^{\d,\pm}$ of the
  effective Hamiltonian.  Then we have
\begin{eqnarray*}
(h^{\d,+}_{j})_\tau +\lambda^{\d,+} (h^{\d,+}_{j})_z &=& \alpha_0 (g_{j}^{\d,+} - h_{j}^{\d,+}) \\
(g^{\d,+}_{j})_\tau +\lambda^{\d,+} (g^{\d,+}_{j})_z &\ge&  2 F_j(\tau,
[h^{\d,+}(\tau,\cdot)]_{j,m}) + \alpha_0 (h_{j}^{\d,+} - g_{j}^{\d,+}) 
\end{eqnarray*}
and
$$
\lambda \le \lambda^{\d,+}  \to \lambda \quad \mbox{as}\quad  \delta
\to 0
$$
and
\begin{eqnarray*}
(h^{\d,-}_{j})_\tau +\lambda^{\d,-} (h^{\d,-}_{j})_z &=& \alpha_0 (g_{j}^{\d,-} - h_{j}^{\d,-}) \\
(g^{\d,-}_{j})_\tau +\lambda^{\d,-} (g^{\d, -}_{j})_z &\ge&  2 F_j(\tau,
[h^{\d,-}(\tau,\cdot)]_{j,m}) + \alpha_0 (h_{j}^{\d,-} - g_{j}^{\d,-}) 
\end{eqnarray*}
and
$$
\lambda \ge \lambda^{\d,-}  \to \lambda \quad \mbox{as}\quad  \delta
\to 0
$$
where $\lambda=\overline{F}(p)$.
\end{pro}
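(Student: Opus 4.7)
The proof breaks into three parts: deducing the inequalities for fixed $\delta$, establishing $\lambda \le \lambda^{\delta,+}$ (and its dual), and then showing $\lambda^{\delta,\pm} \to \lambda$ as $\delta \to 0$.

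First, I apply Proposition \ref{pro:135} with $a_0 = \pm 1$ to obtain Lipschitz families $((h^{\delta,\pm}_j)_j,(g^{\delta,\pm}_j)_j)$ and scalars $\lambda^{\delta,\pm}$ satisfying \eqref{eq:141} \emph{with equality}. The first line of \eqref{eq:141} is already the identity asserted by the proposition. For the $g$-equation, the key remark is a sign analysis of the perturbation term. By the oscillation estimate \eqref{eq:147bis},
$$
\Big|\inf_{z'\in\R}(h^{\delta,\pm}_j(\tau,z')-z')+z-h^{\delta,\pm}_j(\tau,z)\Big|\le 1,
$$
so the bracketed factor $a_0+\inf_{z'}(h_j-z')+z-h_j$ is nonnegative when $a_0=+1$ and nonpositive when $a_0=-1$. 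Combined with $\delta p>0$ and $(h^{\delta,\pm}_j)_z\ge 0$ from \eqref{eq:136}, the whole perturbation term has a definite sign, and dropping it yields the stated inequality in each case.

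Second, to get $\lambda\le\lambda^{\delta,+}$ (and, symmetrically, $\lambda\ge\lambda^{\delta,-}$), I lift to the $(\tau,y)$-variables by setting $u^+_j(\tau,y):=h^{\delta,+}_j(\tau,py+\lambda^{\delta,+}\tau)$ and $\xi^+_j(\tau,y):=g^{\delta,+}_j(\tau,py+\lambda^{\delta,+}\tau)$, and I do the same with the exact hull function $(h,g)$ of slope $\lambda=\overline F(p)$ provided by Corollary \ref{pro:122bis}, producing $(u,\xi)$. By the first step, $(u^+,\xi^+)$ is a super-solution of \eqref{eq:121} with $\delta=0$, the very equation that $(u,\xi)$ solves. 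Both satisfy $|h-z|,|h^{\delta,+}-z|\le 2\lceil C_3\rceil$, so after shifting $(u^+,\xi^+)$ by $2C$ the initial data are ordered, and the comparison principle (Proposition \ref{pro:3}) propagates the ordering. Evaluating at $\tau=k\in\N$, $y=0$, and using the $\tau$-periodicity of $h$ and $h^{\delta,+}$ together with $|h_j-z|\le C$ and $|h^{\delta,+}_j-z|\le C$, one concludes $\lambda k \le \lambda^{\delta,+}k+4C$ for every $k\in\N$, hence $\lambda\le\lambda^{\delta,+}$. The companion bound is obtained by exchanging roles.

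Third, for the convergence as $\delta\to 0$, the bound \eqref{eq:140} lets me extract along any sequence $\delta_n\to 0$ a subsequence with $\lambda^{\delta_n,+}\to\lambda_\infty\in[-C_4,C_4]$. The uniform estimate \eqref{eq:138} (whose constant stays bounded for $\delta_n\in(0,1]$) lets me form the half-relaxed limits $\overline h_j=\limsup^* h^{\delta_n,+}_j$, $\underline h_j=\liminf_* h^{\delta_n,+}_j$, and analogously $\overline g_j,\underline g_j$, which still obey $|\cdot-z|\le C$ and the periodicity/monotonicity properties of \eqref{eq:136}. In the viscosity formulation of the $g$-equation, $(h_j)_z$ is replaced by the test-function gradient $\varphi_z$ at a contact point, and the bracketed factor is $\le 2$ in absolute value by \eqref{eq:147bis}; hence the perturbation $\delta_n p\{a_0+[\cdot]\}\varphi_z$ tends to $0$ uniformly on compact sets. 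Standard stability of viscosity sub-/super-solutions then gives that $(\overline h,\overline g)$ is a sub-solution and $(\underline h,\underline g)$ a super-solution of the exact hull equation corresponding to $\lambda_\infty$. Since both still satisfy $|\cdot-z|\le C$, the uniqueness argument of Step 1 in the proof of Proposition \ref{pro:129} (comparison plus $\tau$-periodicity) forces $\lambda_\infty=\overline F(p)=\lambda$. The limit being independent of the extracted subsequence, $\lambda^{\delta,+}\to\lambda$; the case of $\lambda^{\delta,-}$ is identical.

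The principal difficulty lies in this last step: the Lipschitz estimate $(h^{\delta,\pm}_j)_z\le 1+2L_F/(p\delta)$ degenerates as $\delta\to 0$, so one cannot extract a uniformly Lipschitz limit and classical stability is unavailable; only the viscosity argument, which transfers the blowing-up derivative onto a fixed test-function gradient while using the a priori bound \eqref{eq:147bis} to control the multiplicative factor, succeeds in sending the perturbation to $0$.
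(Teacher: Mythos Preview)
Your proof is correct and follows essentially the same approach as the paper: the differential inequalities come from the sign analysis of the perturbation term via \eqref{eq:147bis} and $(h_j)_z\ge 0$, the ordering $\pm\lambda^{\delta,\pm}\ge\pm\lambda$ is obtained by the comparison principle exactly as in Step~1 of the proof of Proposition~\ref{pro:129}, and the convergence $\lambda^{\delta,\pm}\to\lambda$ is obtained by the half-relaxed limit and stability argument of Step~2 of that same proof. The paper's own proof is a three-line sketch pointing to precisely these ingredients; your write-up simply spells them out.
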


\begin{proof}[Proof of Proposition \ref{pro:139}]

\noindent
Inequalities $\pm \lambda^{\d,\pm} \ge \pm \lambda$ follow from the
comparison principle. Remark that bounds (\ref{eq:140}) and (\ref{eq:138}) on
$\lambda^{\d,\pm}$ and $h^{\d,\pm}_{j}$  are uniform as $\delta$ goes
to zero. Hence  the convergence $\lambda^{\d,\pm} \to \lambda$ holds
true as $\delta \to 0$. 
Indeed, it suffices to adapt Step~2 of the proof of Proposition \ref{pro:129}.
\end{proof}

\section{Qualitative properties of the effective Hamiltonian}\label{s6}
\setcounter{equation}{0}

\begin{proof}[Proof of Theorem \ref{th:4}]

\noindent
We recall that we have hull functions $((h_j)_j,(g_j)_j)$ solutions of
$$\left\{\begin{array}{l}
(h_j)_\tau +\lambda (h_j)_z= \a_0 (g_j-h_j)\\
(g_j)_\tau +\lambda (g_j)_z=2L+2F(\tau,[h(\tau,\cdot)]_{j,m}(z))+\a_0(h_j-g_j)
\end{array}\right.$$
with $\lambda=\overline{F}(L,p)$.

\noindent
The continuity of the map $(L,p)\mapsto \overline{F}(L,p)$ is easily proved
as in step 2 of the proof of Proposition \ref{pro:129}.

\noindent {\bf (i) Bound}

\noindent
This is a straightforward adaptation of the proof of \eqref{eq:boundlambda}.

\noindent {\bf (ii) Monotonicity in $L$}

\noindent
The monotonicity of the map $L\mapsto \overline{F}(L,p)$ follows from the
comparison principle on 
$$((u_j(\tau,y)=h_j(\tau,\lambda\tau+py))_j,(\xi_j(\tau,y)=g_j(\tau,\lambda\tau+py))_j $$ 
where $((h_j)_j,(g_j)_j)$ is the
hull function and $\lambda=\overline{F}(L,p)$.

\end{proof}

\appendix


\section{An alternative proof of Proposition~\ref{pro:130}}
\renewcommand{\theequation}{A.\arabic{equation}}
\setcounter{theo}{0}
\setcounter{equation}{0}
\renewcommand{\thesubsection}{A.\arabic{subsection}}
In this section, we give an alternative proof of Proposition~\ref{pro:130}. 
We adapt here the method we used in \cite{FIM2} and we provide complementary details. 

\subsection{Explanation of the estimate of  Proposition~\ref{pro:130}}\label{sec:appB}
In this section, we formally explain how we derive the estimate obtained in 
 Proposition~\ref{pro:130}.

\noindent
We can adapt the corresponding proof from \cite{FIM2}. 
For all $\eta\ge 0$, we consider the following Cauchy problem
\begin{equation}\label{eq:131}
\left\{\begin{array}{l}
\left\{\begin{array}{rl}
(u_j)_\tau &= \alpha_0 (\xi_j - u_j) \\
(\xi_j)_\tau&=G_j^\delta(\tau,
[u(\tau,\cdot)]_{j,m},\xi_j(\tau, y),\inf_{y'\in\R}\left(\xi_j(\tau,y')-py'\right)+py-\xi_j(\tau,y),(\xi_j)_y)
+ \eta (\xi_j)_{yy} \\
\end{array}\right. 
\\ \\
\left\{\begin{array}{rl}
u_{j+n}(\tau,y)&=u_j(\tau,y+1)\\
\xi_{j+n} (\tau,y) & = \xi_j (\tau,y+1) \\
\end{array}\right. 
\\ \\
\left\{\begin{array}{rl}
u_j(0,y)&=p\left(y+ \frac jn\right)  \\
\xi_j(0,y)&=p\left(y+ \frac jn\right)   \\
\end{array}\right. 
\end{array}\right. 
\end{equation}
where $G_j^\delta$ is given by
$$
G_j^\delta(\tau,V,r,a,q)=2F_j(\tau,V)+\alpha_0 (V_0 -r) +\delta(a_0+a)q
$$
(remark that this is not exactly the function given by \eqref{defi:gdelta}).
It is convenient to introduce the modified Hamiltonian 
$$
\tilde{F}_i (\tau,V_{-m}, \dots, V_m) = 2 F_i (\tau,V_{-m}, \dots, V_m) + \alpha_0 V_0
$$
so that 
$$
G_j^\delta(\tau,V_{-m}, \dots, V_m ,r,a,q) = \tilde{F}_j (\tau,V_{-m},\dots, V_m)- \alpha_0 r +
\delta (a_0 + a) q \, .
$$
Hence, the Lipschitz constant of $\tilde{F}_j(\tau,V)$ with respect to
$V$ is $\tilde{K}_1 = 2 L_F + \alpha_0$.

\noindent {\bf Case A: $\eta >0$ and $F_j \in C^1$}
For $\eta >0$, it is possible to show that
there exists a unique solution $((u_j)_j,(\xi_j)_j)$ of (\ref{eq:131}) in
$(C^{2+\alpha,1+\alpha})^{2n}$ for any $\alpha\in (0,1)$.  We will give the main idea of this existence result in the next subsection.

\noindent {\bf Step 1: bound from below on the gradient}

\noindent
Then, if we define $\zeta_j = (\xi_j)_y$ and $v_j=(u_j)_y$, we can derive the previous
equation in order to get the following one
\begin{equation}
\left\{\begin{array}{rl}\label{eq:132}
(v_j)_\tau = &\alpha_0 (\zeta_j -v_j) \\
(\zeta_j)_\tau - \eta (\zeta_j)_{yy}=  & 
(\tilde{F_j})'_V(\tau,[u(\tau,\cdot)]_{j,m}(y))\cdot [v(\tau,\cdot)]_{j,m}(y) 
- \alpha_0 \zeta_j  - \delta (\zeta_j-p)\zeta_j\\
&  + \delta \left( a_0 +
\inf_{y'\in\R}\left( \xi_j(\tau,y')-py' \right) +py-\xi_j(\tau,y) \right) (\zeta_j)_y  \\
\\
v_{j+n} (\tau,y) =&v_j (\tau,y+1) \\
\zeta_{j+n}(\tau,y)=&\zeta_j(\tau,y+1) \\
\\
v_j(0,y) =&\zeta_j(0,y)=p \, .
\end{array}\right. 
\end{equation}

 Let us now define
$$
\underline{m}_v(\tau)=\inf_{j\in \{1,\dots,n\}}\inf_{y\in\R} v_j (\tau,y) 
\quad \text{and} \quad \underline{m}_\zeta(\tau)=\inf_{j\in \{1,\dots,n\}}\inf_{y\in\R} \zeta_j (\tau,y) \, .
$$
Then we have in the viscosity sense:
$$\left\{\begin{array}{l}
(\underline{m}_v )_\tau \ge \alpha_0 (\underline{m}_\zeta - \underline{m}_v) \\
(\underline{m}_\zeta)_\tau \ge \tilde L_F\min(0,\underline{m}_v)-\a_0 \underline m_\zeta  - \delta
(\underline{m}_\zeta-p)\underline{m}_\zeta   \\
\underline{m}_v(0)=\underline m_\zeta(0)=p>0
\end{array}\right.$$
where we have used the monotonicity assumptions (A2) and (A3) to get the term
$\tilde L_F\min(0,\underline{m}_v)$ with $\tilde L_F=2L_F+\a_0$.
The fact that $(0,0)$ is a sub-solution of this monotone system of ODEs
implies that, for $j\in\{1,\dots,n\}$,
$$
 v_j\ge \underline{m}_v \ge 0\quad {\rm and}\quad \zeta_j\ge \underline{m}_\zeta \ge 0 \, .
$$
In particular, we see that $(u,\xi)$ is a solution of \eqref{eq:131} with 
$G_j^\delta$ given by \eqref{defi:gdelta}.

\medskip

\noindent {\bf Step 2: bound from above on the gradient}

\noindent Similarly we define
$$
\overline{m}_\zeta (\tau)=\sup_{j\in\{1,\dots,n\}}\sup_{y\in\R} \zeta_j(\tau,y) \quad
\text{and} \quad \overline{m}_v (\tau)=\sup_{j\in\{1,\dots,n\}}\sup_{y\in\R} v_j(\tau,y)\, .
$$
Then we have in the viscosity sense
$$
\left\{\begin{array}{l}
(\overline{m}_v)_\tau \le \alpha_0 (\overline{m}_\zeta - \overline{m}_v ) \\
    (\overline{m}_\zeta)_\tau \le (2L_F) \overline{m}_v  
+\alpha_0 (\overline{m}_v - \overline{m}_\zeta) - \delta (\overline{m}_\zeta-p)\overline{m}_\zeta   \\
\overline{m}_v (0)=    \overline{m}_\zeta(0)=p>0
\end{array}\right.
$$
where we have used Step 1 to ensure that $v_j \ge \underline{m}_v \ge 0$ for $j\in\{1,\dots,n\}$.
The constant function $(p+ (2L_F)\delta^{-1})$ (for both components) is a super-solution of the 
previous monotone system of ODEs. Hence, the proof is complete in Case A. 
\medskip

\noindent {\bf Case B: $\eta =0$ and $F$ general}

We can use an approximation argument as in \cite{FIM2}. 
This ends the proof of the proposition.
\subsection{Proof of the existence of a regular solution of \eqref{eq:131}}

We just give the main idea.

It can be useful to remark that $u_{j+l}$ can be rewritten as follows: for all $l \in \{-m,\dots,m\}$,
\begin{equation}\label{eq:xi2u}
u_{j+l} (\tau,y) = p \cdot (y+(j+l)/n) e^{-\alpha_0 \tau} 
+ \alpha_0 \int_0^\tau e^{\alpha_0 (s-\tau)} \xi_{j+l} (s,y) ds \, .
\end{equation}
We set  $v_j(\tau, y)=\xi_j(\tau, y)-py$. Then $(v_j)_j$ is a solution of
\begin{equation}\label{eq:v}\left\{\begin{array}{l}
(v_j)_t-\eta (v_j)_{yy}=\overline F_j(t, [v(\tau,\cdot)+p\cdot]_{j,m}(y))
+\delta \left(1+\inf_{y'}(v(\tau,y'))-v(\tau,y)\right)(v_y+p)\\
v_{j+n}(\tau,y)=v_j(\tau, y+1)+p\\
v_j(0,y)=p(\frac j n)
\end{array}\right.
\end{equation}
where $\overline F_j[\tau,[\xi(\tau,\cdot)]_{j,m}(y)] = 2 F_j(\tau,[u(\tau,\cdot)]_{j,m}(y))+ \a_0 u_j(\tau,y)-\xi_j(\tau,y)$ with $u$ given by \eqref{eq:xi2u} as a function of the time integral of $\xi$.
Since we attempt to get $\xi_j(\tau, y+\frac 1p)=\xi_j(\tau, y)+1$, we
will look for functions $v_j$ which are periodic of period $\frac
1p$. The basic idea is to use a fixed point argument. First, we
``regularize'' the right hand side of \eqref{eq:v} by considering for
some given $K>0$
$$
\F_{K,j}(\tau, v)=T_K^0(\overline F_j(\tau, [v(\tau,\cdot)+p\cdot]_{j,m}(y)))
+\delta \left(1+T_K^1(\inf_{y'}(v(\tau,y'))-v(\tau,y))\right)(T_K^3(v_y+p))
$$
where $T_K^i\in C^\infty_b$ are truncature functions. In particular,
$\F_{K,j}(\tau,\cdot)\in W^{1,\infty}$ uniformly in $\tau\in[0,+\infty)$ and so for all $q>1$, there exists a solution
$w=(w_j)_j=A(v)\in W^{2,1;q}([0,T]\times [0,\frac 1p))$ of
$$ (w_j)_t-\eta (w_j)_{yy}=\F_{K,j}(v)$$
Now, we want to show that the operator $A$ is a contraction. Let $v_1,
v_2\in W^{2,1;q}([0,T]\times [0,\frac 1p))$. Standard parabolic estimates show that
\begin{align*}
  &|A_j(v_1)-A_j(v_2)|_{ W^{2,1;q}([0,T]\times [0,\frac 1p))}\\
  \le &C|\F_{K,j}(\tau, v_1)-\F_{K,j}(\tau, v_2)|_{L^q([0,T]\times [0,\frac 1p))}\\
  \le &C \left(|v_2-v_1|_{L^q([0,T]\times [0,\frac 1p))}
    +|\inf(v_2)-v_2-(\inf(v_1)-v_1)|_{L^q([0,T]\times [0,\frac 1p))}+|(v_2-v_1)_y|_{L^q([0,T]\times [0,\frac 1p))}\right)\\
  \le & C T^\beta|v_2-v_1|_{ W^{2,1;q}([0,T]\times [0,\frac 1p))}
\end{align*}
for some $\beta>0$ (see \cite{LSU67,IJM}). \medskip

Sobolev embedding and parabolic regularity theory in Holder's spaces implies the existence for $T$ small enough of a solution $w_j\in C^{2+\a,
  \frac { 2+\a}2}$.\medskip

While we have smooth solutions below the truncature, we can apply the
arguments of Subsection \ref{sec:appB} and get estimates on the
gradient of the solution which ensures that the solution is indeed
below the truncature. Finally, a posteriori, the truncature can be
completely removed because of our estimate on the gradient of the
solution.  \medskip

\noindent {\bf Acknowledgements}\\
This work was partially supported by the ANR-funded project ``MICA'' (2006-2010).
The second author was also partially supported by the ANR-funded project ``Kam Faible''
(2008-2012). 


\def\cprime{$'$}

\end{document}